\documentclass[12pt]{amsart}

\textheight 620pt \textwidth 450pt
\oddsidemargin 2.5 mm \evensidemargin 2.5 mm
\topmargin 0mm

\usepackage[foot]{amsaddr}
\usepackage[toc]{appendix} 
\usepackage{latexsym}
\usepackage{amsfonts}
\usepackage{amssymb}
\usepackage{amsmath}
\usepackage{mathrsfs}
\usepackage{bbm}
\usepackage{dsfont}
\usepackage{hyperref}
\allowdisplaybreaks

\usepackage[toc]{appendix}

%%%%%%%%%%%%%%%%%%%%%%%%%%%%%%
%%%%%%%%%%%  EQUATIONS etc.
%%%%%%%%%%%%%%%%%%%%%%%%%%%%%%
\newcommand{\be}{\begin{equation}}
\newcommand{\ee}{\end{equation}}
\newcommand{\bea}{\begin{eqnarray}}
\newcommand{\eea}{\end{eqnarray}}
\newcommand{\bean}{\begin{eqnarray*}}
\newcommand{\eean}{\end{eqnarray*}}
\newcommand{\brray}{\begin{array}}
\newcommand{\erray}{\end{array}}

%%%%%%%%%%%%%%%%%%%%%%%%%%%%%%%%%
%%%%%%%%%%%% THEOREMS ET AL
%%%%%%%%%%%%%%%%%%%%%%%%%%%%%%%%%
%\newtheorem{dfn}{Definition}[section]
%\newtheorem{thm}[dfn]{Theorem}
%\newtheorem{lemma}[definition]{Lemma}
%\newtheorem{ppsn}[dfn]{Proposition}
%\newtheorem{corollary}[definition]{Corollary}
%\newtheorem{xmpl}[definition]{Example}
%\newtheorem{remark}[definition]{Remark}
%\newtheorem{xrcs}{Exercise}[section]

\newcommand{\bdfn}{\begin{dfn}\rm}
\newcommand{\bthm}{\begin{thm}}
\newcommand{\blmma}{\begin{lmma}}
\newcommand{\bppsn}{\begin{ppsn}}
\newcommand{\bcrlre}{\begin{crlre}}
\newcommand{\bxmpl}{\begin{xmpl}}
\newcommand{\brmrk}{\begin{rmrk}\rm}

\newcommand{\edfn}{\end{dfn}}
\newcommand{\ethm}{\end{thm}}
\newcommand{\elmma}{\end{lmma}}
\newcommand{\eppsn}{\end{ppsn}}
\newcommand{\ecrlre}{\end{crlre}}
\newcommand{\exmpl}{\end{xmpl}}
\newcommand{\ermrk}{\end{rmrk}}

%%%%%%%%%%%%%%%%%%%%%%%%%%%
%%%%%%%%%%%%%%%% SPECIAL SYMBOLS
%%%%%%%%%%%%%%%%%%%%%%%%%%%

\newcommand{\bbc}{\mathbb{C}}

\newcommand{\bbn}{\mathbb{N}}
\newcommand{\bbr}{\mathbb{R}}

\newcommand{\bbt}{\mathbb{T}}

%%%%%%%%%%%%%% ABBREVIATIONS
%%%%%%%%%%%%%%%%%%%%%%%%%%%%%%%%

%\newcommand{\cle}{\mathcal{E}}

\newcommand{\clh}{\mathcal{H}}

\makeatletter
\let\@wraptoccontribs\wraptoccontribs
\makeatother
%%%%%%%%%%%%%%%%%%%%%%%%%%%%%%%%%

%%%%%%%%%%%%%%%%%%%%%%%%%%%%%%%%%

\title{Multiparameter Decomposable product systems}
\author{C.H. Namitha  and S. Sundar}
\address{Institute of Mathematical Sciences, A CI of Homi Bhabha National Institute, 4th cross street, CIT Campus, Taramani, Chennai, India, 600113}
\email{namithachanguli7@gmail.com, sundarsobers@gmail.com}

%\setlength{\parindent}{0pt}
%\setlength{\parskip}{2 mm}
%\linespread{1.5}

% include a prescaled ps-file: name, caption

\usepackage{enumerate}
\usepackage{bbm}
\usepackage{cleveref}

\newtheorem{definition}{Definition}[section]
\newtheorem{prop}[definition]{Proposition}
\newtheorem{theorem}[definition]{Theorem}
\newtheorem{corollary}[definition]{Corollary}
\newtheorem{lemma}[definition]{Lemma}
\newtheorem{remark}[definition]{Remark}
\numberwithin{equation}{section}
\newcommand{\RNum}[1]{\uppercase\expandafter{\romannumeral #1\relax}}
\newcommand{\R}{\mathbb{R}}
\newcommand{\RH}{\frac{\mathbb{R}^{d}}{H}}
\newcommand{\RO}{\frac{\mathbb{R}^{d-1}}{H_{0}}}

\begin{document}
\maketitle

\begin{abstract}
 In \cite{Arv_Path},  Arveson proved that a $1$-parameter decomposable product system is isomorphic to the product system
of a CCR flow. We show that the structure of a generic decomposable product system, over higher dimensional cones, modulo twists by multipliers, is given by an isometric representation $V$ of the cone and a certain $2$-cocycle for $V$. 
Moreover, we compute the space of $2$-cocycles for shift semigroups associated to transitive actions of a higher dimensional cone.
\end{abstract}

\noindent {\bf AMS Classification No. :} {Primary 46L55; Secondary 46L99.}  \\
{\textbf{Keywords :}} $E_0$-semigroups, Decomposable Product Systems, Cocycles.

%\tableofcontents

\section{Introduction}
Product systems, over $(0,\infty)$, were introduced by Arveson (\cite{Arv_Fock}) as a complete invariant to classify $1$-parameter $E_0$-semigroups whose study has been of interest to  many authors.  Powers (\cite{Powers_Index}, \cite{Powers_TypeIII}, \cite{Powers}, \cite{Powers_CPflow}) and Arveson (\cite{Arv_Fock}, \cite{Arv_Fock2}, \cite{Arv_Fock3}, \cite{Arv_Fock4}) are considered to be the founders of the subject of $E_0$-semigroups with Tsirelson (\cite{Tsirelson}, \cite{Tsi}, \cite{Tsirelson_gauge}) being another major contributor who with his probabilistic ideas made significant breakthroughs in the $1$-parameter theory of $E_0$-semigroups.  Within the framework of noncommutative 
dynamics, the role of product systems over more general semigroups  was made clear by Shalit  (\cite{Shalit}, \cite{Shalit_2008}, \cite{Shalit_2010} and \cite{Shalit_2011}), by
Solel (\cite{Solel}) and by Shalit and Solel (\cite{Shalit_Solel}) who investigated, in detail,  the dilation theory of CP-semigroups to $E_0$-semigroups for semigroups other than $(0,\infty)$, and demonstrated that it is worth  studying product systems over general semigroups.  More recently, an intrinsic study of  multiparameter $E_0$-semigroups, especially over cones, and  product 
systems  was undertaken in \cite{Anbu_Sundar},  \cite{Anbu_Vasanth}, \cite{murugan}, and \cite{SUNDAR}. 
This paper is an attempt in this direction. Here, we  complete the analysis of decomposable product systems over cones initially dealt with in \cite{SUNDAR}. 

Let $P$ be a closed convex cone in $\bbr^d$. Without loss of generality, we can assume that $P-P=\mathbb{R}^{d}$. We also make the assumption  that  $P \cap -P=\{0\}$. A product system over $P$ is a measurable field of Hilbert spaces,
over the base space $P$, endowed with a product that is associative and that is compatible with the measurable structure. 
It follows from the seminal work of Arveson (\cite{Arv_Path}) that, in the one parameter case, decomposable product systems are precisely those that of CCR flows, and there are only countably 
many of them. 

Arveson's proof, in the one parameter case, relies on constructing an isometric representation out of a decomposable product system and then solving several cohomological issues that turned 
out to be trivial. 
Imitating Arveson,  the construction of an isometric representation from a  decomposable product system over higher dimensional cones, was achieved in \cite{SUNDAR}. In particular, given a decomposable product system $E$ over $P$, a pure isometric representation, which we denote by $V^{E}$, of $P$ was constructed. Let us call $V^{E}$ the isometric representation associated to $E$. It was  observed in \cite{SUNDAR} that in the presence of a unit,  cohomological problems do not arise and consequently, it was proved in \cite{SUNDAR} that if $E$ is a decomposable product system over $P$, then $E$ is isomorphic to the product system of the CCR flow $\alpha^{V}$ (where $V=V^{E})$ if and only if $E$ has a unit. Moreover, examples of decomposable product systems not having units were 
given in \cite{SUNDAR}. It was further demonstrated  in \cite{Poisson} that  inhomogeneous Poisson processes on $\mathbb{R}^d$ give rise to an abundant supply of such examples. 

The absence of units in the multiparameter case is a clear indication of the fact that there are non-trivial  cohomological obstructions in the multiparameter case. Thus, it is a natural question to understand these cohomological issues  in more detail and, if possible, to compute the cohomology groups in concrete situations. 
 In this paper, we continue the imitation  undertaken in \cite{SUNDAR}, spell out the cohomological issues in more detail, and complete the analysis by describing the structure of a generic decomposable product system over $P$. 

Next, we explain the results obtained. 
Let $V=\{V_a\}_{a \in P}$ be a strongly  continuous semigroup of isometries on a separable Hilbert space $\mathcal{H}$, i.e. 
\begin{enumerate}
\item[(1)] for $a \in P$, $V_a$ is an isometry on $\mathcal{H}$, 
\item[(2)] for $a,b \in P$, $V_aV_b=V_{a+b}$, and
\item[(3)] the map $P \ni a \to V_a\xi \in \mathcal{H}$ is continuous for every $\xi \in \mathcal{H}$. 
\end{enumerate}
We also call a strongly continuous semigroup of isometries indexed by $P$ an isometric representation of $P$.  We assume that $V$ is pure, i.e $\displaystyle \bigcap_{a \in P}Ran(V_a)=\{0\}$. 

Let $\Gamma:P \times P \to \mathcal{H}$ be a map. We say that $\Gamma$ is a $2$-cocycle for $V$ if 
\begin{enumerate}
\item[(1)] for $\xi \in \mathcal{H}$, the map $P \times P \ni (a,b) \to \langle \Gamma(a,b)|\xi \rangle \in \bbc$ is measurable,
\item[(2)] for $a,b \in P$, $\Gamma(a,b) \in Ker(V_b^*)$, and
\item[(3)] for $a,b,c \in P$, 
\[
\Gamma(a,b+c)+V_b\Gamma(b,c)=\Gamma(a,b)+V_b\Gamma(a+b,c).\]
\end{enumerate}

Let $\Gamma:P \times P \to \mathcal{H}$ be a $2$-cocycle. We say that $\Gamma$ is \emph{admissible} if there exists a measurable map $\alpha:P \times P \to \mathbb{T}$ such that 
\begin{equation}
\label{admissibleintro}
\frac{\alpha(a,b)\alpha(a+b,c)}{\alpha(a,b+c)\alpha(b,c)}=e^{i Im \langle \Gamma(a,b+c)|V_b\Gamma(b,c)\rangle}
\end{equation} for $a,b,c\in P$.

Let $V=\{V_a\}_{a \in P}$ be a pure semigroup of isometries on a separable Hilbert space $\mathcal{H}$. Suppose $\Gamma:P \times P \to \mathcal{H}$ is an admissible $2$-cocycle for $V$ and suppose $\alpha:P \times P \to \mathbb{T}$ is a measurable map that satisfies Eq. \ref{admissibleintro}. 
Let $E^V:=\{E^V(a)\}_{a \in P}$ be the product system of the CCR flow associated to $V$. We denote the product on $E^V$ by $\odot$.

For $a \in P$, set $E(a):=E^V(a)$. Denote the field $\{E(a)\}_{a \in P}$ of Hilbert spaces by $E$. Consider the following multiplication rule 
on $E$. For $S \in E(a)$ and $T \in E(b)$, define $S \cdot T \in E(a+b)$ by 
\[
S \cdot T=\alpha(a,b)S \odot W(\Gamma(a,b))T.\]
In the above formula, $\{W(\xi):\xi \in \mathcal{H}\}$ denotes the collection of Weyl operators. 
It is straightforward to verify that the above product makes $E$ a product system.   As the product system $E$ depends on the triple $(\alpha,\Gamma,V)$, we denote $E$ by $E^{(\alpha,\Gamma,V)}$. It is not difficult to verify that $E^{(\alpha,\Gamma,V)}$ is decomposable. 

Our main results are the following two theorems. 
\begin{theorem}
\label{mainintro}
Let $E$ be a decomposable product system over $P$. Denote the isometric representation associated to $E$, i.e $V^{E}$ by $V$. Then, there exist an admissible $2$-cocycle $\Gamma$ for $V$ and a measurable map $\alpha:P \times P \to \bbt$ satisfying Eq. \ref{admissibleintro} such that $E$ is isomorphic to $E^{(\alpha,\Gamma,V)}$. 
\end{theorem}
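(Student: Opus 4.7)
The plan is as follows. By the construction of $V = V^E$ from $E$ in \cite{SUNDAR}, one can identify the measurable field $\{E(a)\}_{a \in P}$ with the CCR fibre field $\{E^V(a)\}_{a \in P}$ in a measurable manner that sends decomposable vectors of $E$ to scalar multiples of exponential vectors of $E^V$, with a distinguished measurable section of ``vacua'' in $E$ mapped to the vacua in $E^V$. Under this identification the given product $\cdot$ on $E$ and the Fock product $\odot$ on $E^V$ live on the same field of Hilbert spaces, so they can be compared fibrewise.

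For each $(a,b) \in P \times P$, both products induce unitaries $E^V(a) \otimes E^V(b) \to E^V(a+b)$ which send tensor products of exponential vectors to scalar multiples of exponential vectors. Their composition
\[
\Psi(a,b) := (\odot\text{-mult})^{-1} \circ (\cdot\text{-mult})
\]
is therefore an exponential-preserving unitary on $E^V(a+b)$. By the standard rigidity of exponential-preserving automorphisms of Fock space, $\Psi(a,b)$ is a scalar multiple of a Weyl operator $W(\eta(a,b))$ with $\eta(a,b) \in \ker V_{a+b}^*$. The alignment of vacua guarantees that $\eta(a,b)$ lies in the summand $V_a \ker V_b^*$ of the orthogonal decomposition $\ker V_{a+b}^* = \ker V_a^* \oplus V_a \ker V_b^*$, so we may write $\eta(a,b) = V_a \Gamma(a,b)$ with $\Gamma(a,b) \in \ker V_b^*$. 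Since $W(V_a \Gamma)$ on $E^V(a+b)$ corresponds to $\id \otimes W(\Gamma)$ under the factorisation $E^V(a+b) \cong E^V(a) \otimes E^V(b)$, this yields
\[
S \cdot T = \alpha(a,b)\, S \odot W(\Gamma(a,b))\, T,
\]
which is precisely the formula defining $E^{(\alpha,\Gamma,V)}$.

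The 2-cocycle identity for $\Gamma$ and the admissibility equation (\ref{admissibleintro}) both fall out from associativity of $\cdot$. Expanding $(S \cdot T) \cdot U$ and $S \cdot (T \cdot U)$, and using the Weyl relation $W(\xi) W(\eta) = e^{-i\, Im \langle \xi | \eta \rangle} W(\xi + \eta)$ together with the interaction of Weyl operators with the Fock factorisation at $a+b+c$, the $\mathcal{H}$-valued part of the identity yields the cocycle equation for $\Gamma$, while the $\bbt$-valued part yields (\ref{admissibleintro}). Measurability of $\alpha$ and $\Gamma$ is inherited from the measurability of the identifications, of the multiplication maps, and of the orthogonal splitting of $\ker V_{a+b}^*$.

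The main obstacle I expect is the vacuum-alignment step: showing that, after a suitable measurable modification of the fibrewise identifications $E(a) \cong E^V(a)$, the Weyl parameter $\eta(a,b)$ always lies in $V_a \ker V_b^*$ rather than in the full space $\ker V_{a+b}^*$. This is the place where purity of $V$ and the fine structure of decomposable vectors built from vacua (developed in \cite{SUNDAR}) are genuinely used; without it one would only recover a twist by a Weyl operator acting on both tensor factors rather than the one-sided form in the statement. Once this normalisation is in place, the remainder is a direct associativity computation together with a book-keeping of measurability.
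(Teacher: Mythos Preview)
Your outline captures the right shape of the argument, and the associativity computation in the second half is essentially what the paper does. But there are two genuine gaps in the first half that you are underestimating.

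\textbf{The fibrewise identification is not a black box from \cite{SUNDAR}.} You write that ``by the construction of $V = V^E$ from $E$ in \cite{SUNDAR}, one can identify the measurable field $\{E(a)\}_{a\in P}$ with the CCR fibre field $\{E^V(a)\}_{a\in P}$ in a measurable manner.'' This is precisely what \cite{SUNDAR} does \emph{not} provide in general: there it is shown that $E \cong E^V$ as product systems only when $E$ has a unit. What \cite{SUNDAR} gives you is the Hilbert space $\mathcal{H}$ and the isometries $V_a$, built via the $e$-logarithm from a left coherent section $\{e_a\}$ of decomposable vectors. The natural candidate for the fibrewise map is $D(a) \ni u \mapsto \langle u|e_a\rangle\, e([u]-[e_a]) \in \Gamma_s(\ker V_a^*)$, and checking that it is isometric is straightforward from the defining property of $L^e$. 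But showing that its range is \emph{total} in $\Gamma_s(\ker V_a^*)$ is not automatic; in the paper this is obtained by restricting to a one-parameter ray through $a$, invoking Arveson's one-parameter structure theorem to produce a suitable coboundary $\xi_t$, and then translating by a Weyl operator. Without this step you do not have a unitary identification, only an isometric embedding, and the rest of the argument does not start.

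\textbf{Rigidity gives more than a Weyl operator.} You assert that the exponential-preserving unitary $\Psi(a,b)$ on $\Gamma_s(\ker V_{a+b}^*)$ is a scalar multiple of a Weyl operator. The actual rigidity statement (e.g.\ Guichardet, or Lemma~2.1 of \cite{guichardetsymmetric} as used in the paper) says $\Psi(a,b) = \lambda\, W(\eta)\, \Gamma(U)$ for some unitary $U$ on $\ker V_{a+b}^*$. Vacuum alignment, as you describe it, controls the component of $\eta$ in $\ker V_a^*$ but says nothing about $U$. To see that $U = I$ one really has to compute: with the explicit identification above, for $u\in D(a)$ and $v\in D(b)$ one has
\[
[uv]-[e_{a+b}] = ([u]-[e_a]) + V_a([v]-[e_b]) + V_a\big([e_b]-[e(a,a+b)]\big),
\]
so $\Psi(a,b)$ sends $e(\xi)$ to a scalar times $e(\xi + V_a\Gamma(a,b))$ for $\xi$ in a total set. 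This computation, not abstract rigidity, is what pins down both $U=I$ and $\eta = V_a\Gamma(a,b)$ simultaneously. The paper carries this out directly rather than passing through $\Psi(a,b)$; it defines $\Gamma(a,b):=[e_b]-[e(a,a+b)]$ and $\alpha(a,b)$ explicitly from inner products of the coherent section, verifies the cocycle and admissibility identities by hand, and then checks that $\Psi_a(u) = \langle u|e_a\rangle e([u]-[e_a])$ is multiplicative for the twisted product.

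A further point you have not addressed: the $e$-logarithm, and hence the entire construction, is a priori only available over the interior $\Omega$, not over $P$. The paper first establishes the structure over $\Omega$, then extends $V$, $\Gamma$, and $\alpha$ to $P$ and uses an algebraic extension result (isomorphism over $\Omega$ plus left coherent sections implies isomorphism over $P$) to conclude. This $\Omega$-versus-$P$ bootstrap is a genuine step that your outline omits entirely.
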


We need a bit of terminology before stating the next theorem. Let $E$ and $F$ be two product systems over $P$, and suppose $\displaystyle \theta=\{\theta_a\}_{a  \in P}: E \to F$ is a Borel map. We say that $\theta$ is a \emph{projective isomorphism} if there exists a Borel multiplier $\omega:P \times P \to \bbt$ such that 
\begin{enumerate}
\item[(1)] for $a \in P$, $\theta_a:E(a) \to F(a)$ is a unitary, and
\item[(2)] for $u \in E(a)$ and $v \in E(b)$, $\theta_{a+b}(uv)=\omega(a,b)\theta_a(u)\theta_b(v)$.
\end{enumerate}
Two product systems are said to be \emph{projectively isomorphic} if there exists a projective isomorphism between them. 
It is not difficult to verify that if $E$ and $F$ are two decomposable product systems that are projectively isomorphic, then $V^{E}$ and $V^{F}$ are unitarily equivalent. 

\begin{theorem}
\label{mainintro1}
The product systems $E^{(\alpha_1,\Gamma_1,V^{(1)})}$ and $E^{(\alpha_2,\Gamma_2,V^{(2)})}$ are projectively isomorphic if and only if there exist a unitary $U:\mathcal{H}_1\to \mathcal{H}_2$ and a measurable map $\xi:P \to \mathcal{H}_2$ such that 
\begin{enumerate}
\item[(1)] for  $a \in P$, $UV^{(1)}_{a}U^{*}=V^{(2)}_{a}$, 
\item[(2)] for  $a,b \in P$, $V^{(2)}_{a}(U\Gamma_1(a,b)-\Gamma_2(a,b))=\xi_{a+b}-\xi_{a}-V_{a}^{(2)}\xi_b$, and
\item[(3)] for $a \in P$, $\xi_{a} \in Ker(V_a^{(2)*})$. 
\end{enumerate}
Here, $\mathcal{H}_i$ is the Hilbert space on which $V^{(i)}$ acts for $i=1,2$.
\end{theorem}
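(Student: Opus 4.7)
The plan is to analyze how the projective isomorphism $\theta_a$ interacts with the CCR Fock space structure on each $E^{(\alpha_i,\Gamma_i,V^{(i)})}(a) = E^{V^{(i)}}(a)$. Three basic tools drive the calculations: the Weyl commutation relation between $W(\eta)$ and $W(\zeta)$, the translation rule $S \odot (W(\eta) T) = W(V_a\eta)(S \odot T)$ for $S \in E^V(a)$, $\eta \in Ker(V_b^*)$, $T \in E^V(b)$, and the fact (already used in \cite{SUNDAR} for CCR flows over cones) that the decomposable vectors in $E^V(a)$ are precisely the scalar multiples of exponential vectors.

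For the ``if'' direction, given $U$ and $\xi$ satisfying (1)--(3), condition (1) implies $U(Ker(V^{(1)*}_a)) = Ker(V^{(2)*}_a)$, so second quantization produces a unitary $\widetilde{U}_a : E^{V^{(1)}}(a) \to E^{V^{(2)}}(a)$ preserving the $\odot$-product and the vacuum and conjugating Weyl operators as $\widetilde{U}_a W^{(1)}(\eta) = W^{(2)}(U\eta)\widetilde{U}_a$. Define $\theta_a(S) := W(\xi_a)\widetilde{U}_a(S)$; condition (3) is exactly what makes $W(\xi_a)$ act on $E^{V^{(2)}}(a)$. Unfolding $\theta_{a+b}(S \cdot_1 T)$ via the product rule, pushing the Weyl factors across $\odot$ using the translation rule, and combining them with the Weyl commutation relation, the resulting vector differs from $\theta_a(S) \cdot_2 \theta_b(T)$ only by a scalar built from $\alpha_1$, $\alpha_2$ and inner products among $\xi$ and the $\Gamma_i$; condition (2) is precisely the identity that makes the Weyl-argument vectors in the two computations agree, and the residual measurable $\bbt$-valued factor is the multiplier $\omega$, whose cocycle identity follows from associativity of the $\cdot_i$.

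For the ``only if'' direction, start with a projective isomorphism $\theta$ of multiplier $\omega$. Since $V^{E}$ is a projective-isomorphism invariant (as noted in the excerpt), one obtains a unitary $U: \mathcal{H}_1 \to \mathcal{H}_2$ with $UV^{(1)}_a U^* = V^{(2)}_a$, yielding (1), and hence (by second quantization) the same unitary $\widetilde{U}_a$ as in the ``if'' direction. Because $\theta$ preserves decomposability, $\theta_a(\Omega^a)$ is proportional to an exponential vector in $E^{V^{(2)}}(a)$, so one writes $\theta_a(\Omega^a) = c_a W(\xi_a)\Omega^a$ with $c_a \in \bbt$ and $\xi_a \in Ker(V^{(2)*}_a)$, which is (3); the vector $\xi_a$ is Borel in $a$ because it is obtained by projecting $\theta_a(\Omega^a)$ onto the one-particle sector of $E^{V^{(2)}}(a)$ and normalizing by the nonzero vacuum coefficient. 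Applying projective multiplicativity to $\Omega^a \cdot_1 \Omega^b = \alpha_1(a,b) W(V^{(1)}_a \Gamma_1(a,b))\Omega^{a+b}$, the two sides of $\theta_{a+b}(\Omega^a \cdot_1 \Omega^b) = \omega(a,b)\theta_a(\Omega^a) \cdot_2 \theta_b(\Omega^b)$ both reduce, via the identities used in the ``if'' direction, to scalar multiples of Weyl-translated vacua in $E^{V^{(2)}}(a+b)$; equating the Weyl arguments gives exactly relation (2). A parallel calculation with exponential vectors in place of the vacua then shows $\theta_a = c_a W(\xi_a)\widetilde{U}_a$ on the total set of decomposable vectors, hence on all of $E^{V^{(1)}}(a)$.

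The hard part is the careful bookkeeping of phase factors and arranging the selection of $\xi_a$ and $c_a$ as measurable functions of $a$ so that the scalar discrepancies above assemble into a single measurable multiplier $\omega: P \times P \to \bbt$; this is essentially a Borel-selection question, resolved by the canonical projection onto the one-particle sector. A secondary subtlety is verifying that the agreement of $\theta_a$ with $W(\xi_a)\widetilde{U}_a$ on decomposable vectors extends (up to a uniform phase) to all of $E^{V^{(1)}}(a)$, which relies on the totality of decomposable vectors in the Fock space and on the vector-independence of the multiplier $\omega$.
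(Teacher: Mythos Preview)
Your ``if'' direction coincides with the paper's: define $\Lambda_a := W(\xi_a)\Gamma(U)$ (second quantization composed with a Weyl translate) and check that it carries the product of $E_1$ to that of $E_2$ up to a measurable $\bbt$-valued factor.

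The ``only if'' direction has a real gap. You try to extract $\xi_a$ from $\theta_a(\Omega^a)$ alone and then read off (2) by applying projective multiplicativity to the pair $(\Omega^a,\Omega^b)$. But $\Omega^a\cdot_1\Omega^b$ is proportional to $e\big(V^{(1)}_a\Gamma_1(a,b)\big)$, which is \emph{not} the vacuum of $E_1(a+b)$; knowing only where $\theta_{a+b}$ sends the vacuum does not tell you where it sends this exponential vector, so you cannot equate the Weyl arguments on the two sides. The ``parallel calculation with exponential vectors'' you invoke is circular: it presupposes $\theta_a(e(\eta))\propto e(U\eta+\xi_a)$, which is exactly what remains to be proved. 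There is also a secondary problem: the intertwiner you obtain abstractly from the projective-isomorphism invariance of $V^E$ is only determined up to the commutant $\{V^{(2)}_a,V^{(2)*}_a:a\in P\}'$, and nothing links your particular choice to condition (2). The paper avoids both problems by invoking, fibrewise, the structure theorem for Fock-space unitaries preserving exponential rays (Lemma~2.1 of \cite{guichardetsymmetric}), which forces $\theta_a=e^{i\lambda_a}W(\xi_a)\Gamma(U_a)$ for some unitary $U_a:Ker(V^{(1)*}_a)\to Ker(V^{(2)*}_a)$; projective multiplicativity applied to \emph{general} $e(\xi)\cdot_1 e(\eta)$ then produces a single affine identity in $\xi$ and $\eta$, and specialising $\xi,\eta$ yields in turn the coherence $U_{a+b}|_{Ker(V^{(1)*}_a)}=U_a$ (hence a global $U$), the intertwining relation (1), and the coboundary relation (2). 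That structural lemma is the missing ingredient in your argument.
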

Next, we cast Thm. \ref{mainintro} and Thm. \ref{mainintro1} in cohomological language as follows. Suppose $V:=\{V_a\}_{a \in P}$ is an isometric representation of $P$ that acts on the Hilbert space $\mathcal{H}$. Denote the space of $2$-cocycles and the set of admissible cocycles for $V$ by $Z^{2}(P,\mathcal{H})$ and by $Z_{a}^{2}(P,\mathcal{H})$ respectively. 
For $\Gamma \in Z^{2}(P,\mathcal{H})$, we say that $\Gamma$ is a coboundary if there exists a measurable map $\xi:P \to \mathcal{H}$ such that 
\begin{enumerate}
\item[(1)] for $a \in P$, $\xi_a \in Ker(V_a^*)$, and
\item[(2)] for $a,b \in P$, $V_a\Gamma(a,b)=\xi_{a+b}-\xi_a-V_a\xi_b$.
\end{enumerate}
Denote the space of coboundaries by $B^{2}(P,\mathcal{H})$ and the resulting cohomology group by $H^{2}(P,\mathcal{H})$. The image of $Z_{a}^{2}(P,\mathcal{H})$ in $H^{2}(P,\mathcal{H})$, under the natural quotient map, is denoted by $H^{2}_{a}(P,\mathcal{H})$.  To stress the dependence of $H^{2}_a(P,\mathcal{H})$ on $V$, we denote it by $H_{a}^{2}(P,\mathcal{H},V)$, or simply by $H_{a}^{2}(P,V)$. We use similar notation for $H^{2}(P,\mathcal{H})$ and  for others. 
Let \[M_V:=\{V_{a},V_{a}^{*}: a \in P\}^{'},\] and let $\mathcal{U}(M_V)$ be the unitary group of the von Neumann algebra $M_V$. Note that the group $\mathcal{U}(M_V)$ acts naturally on $H^{2}_{a}(P,\mathcal{H})$. 

Denote the collection (up to unitary equivalence) of pure isometric representations of $P$ by $Isom(P)$, and denote the collection (up to projective isomorphism) of decomposable product systems over $P$ by $\mathcal{D}(P)$. For $V \in Isom(P)$, let $\mathcal{D}_{V}(P)$ be the collection (up to projective isomorphism) of decomposable product systems whose associated isometric representation is $V$. The results of \cite{SUNDAR} assert that 
\begin{equation}
\label{disjoint intro}
\mathcal{D}(P)=\coprod_{V \in Isom(P)}\mathcal{D}_V(P).
\end{equation}
Thm. \ref{mainintro} and Thm. \ref{mainintro1} assert that, for a pure isometric representation $V$ of $P$ acting on a Hilbert space $\mathcal{H}$,  
\begin{equation}
\label{projective isomorphism equivalence intro}
\mathcal{D}_V(P)=\frac{H^{2}_a(P,\mathcal{H},V)}{\mathcal{U}(M_V)}.
\end{equation}

In view of the above two equations, we believe that `a complete classification or an enumeration' of decomposable product systems in the higher dimensional case,  just like in the $1$-parameter case, is  harder to achieve.  For, this relies on describing all pure semigroup of isometries of $P$, which is complicated. 
Even the irreducible ones, even for the case $P=\mathbb{R}_{+}^{2}$, is not known.  Thus, providing a  'complete list' of decomposable product systems in the higher dimensional case is beyond the scope of the authors. 

Having said the above, it is still desirable and is of some interest to compute the cohomology group $H^{2}(P,\mathcal{H},V)$ (and $H^2_a(P,\mathcal{H},V)$)  for a few examples. In the one parameter case, Theorem 5.3.2 of \cite{arveson} states that, for every pure isometric representation $V$ of $[0,\infty)$, the space of $2$-cocycles (modulo coboundaries) is zero. We show that this is not necessarily true by explicitly computing $H^2(P,V)$ (which turns out be non-zero) for the  shift semigroup associated to a transitive action of the cone $P$. It is this non-vanishing of the cohomology, in the multiparameter case, that obstructs a decomposable product system from being a CCR flow (even if we allow for twists by multipliers).  
The class of isometric representations for which we compute the space of $2$-cocycles and the set of admissible $2$-cocycles is next described.

 We assume that $d \geq 2$.  Let $P$ be a pointed, spanning closed convex cone in $\mathbb{R}^d$.  Let $H$ be a closed subgroup of $\mathbb{R}^d$. 
Note that $\mathbb{R}^d$ acts on the homogeneous space $\mathbb{R}^d/H$ by translations. Let $A \subset \mathbb{R}^d/H$ be a non-empty, proper, Borel subset such that $A+P \subset A$. We call such subsets $P$-spaces. 
Consider the Hilbert space $L^2(A)$. For $a \in P$, let $V_a$ be the isometry on $L^2(A)$ defined by the equation 
\begin{equation}
 \label{isometries intro}
V_{a}(f)(x):=\begin{cases}
 f(x-a)  & \mbox{ if
} x-a \in A,\cr
   &\cr
    0 &  \mbox{ if } x-a \notin A.
         \end{cases}
\end{equation}
Then, $V^A:=V=\{V_a\}_{a \in P}$ is a pure isometric representation of $P$. We call $V^{A}$ the shift semigroup associated to the $P$-space $A$. 

For the shift semigroup $V^{A}$, we prove the following. 
\begin{enumerate}
\item[(1)] Suppose $L(H):=span(H)$ has co-dimension one in $\mathbb{R}^d$. Then, 
\[
H^2(P,V^A)=L(H)\oplus L(H).\]
Here, we consider $L(H)\oplus L(H)$ as the complexification of the real vector space $L(H)$. Moreover, 
\[
H^2_a(P,V^A)=\{(\lambda,\mu) \in L(H) \oplus L(H): \textrm{$\lambda$ and $\mu$ are linearly dependent}\}.\] 
In this case, $\mathcal{D}_{V^A}(P)$ has a nice parametrisation. Let $\sim$ be the equivalence relation on $L(H)$ defined by $\lambda \sim \mu$ if $\lambda=\pm \mu$.
Then,
\[
\mathcal{D}_{V^A}(P)=\frac{L(H)}{\sim}.
\]
%Recall that for an isometric representation $V$, $\mathcal{D}_V(P)$ denotes the collection (up to projective isomorphism) of product systems over $P$ whose associated isometric representation is $V$.

\item[(2)] Suppose $L(H):=span(H)$ is of co-dimension greater than $1$. In this case, every $2$-cocycle for $V^{A}$ is admissible and \[
H^{2}(P,V^A)=H^{2}_{a}(P,V^A)=H^{1}(G,L^{2}(G))\] for a locally compact, abelian group $G$ which is not compact. Here, $H^1(G,L^2(G))$ is the non-reduced $L^2$-cohomology group of $G$. Moreover, the group $G$ depends  only on $H$ and does not depend on the $P$-space $A$. 
\end{enumerate}

The organization of this paper is next described. After this introductory section, in Section 2, we discuss the notion of algebraic $E_0$-semigroups and algebraic product systems which
we encounter at one crucial point in the paper. In Section 3, we discuss the space of $2$-cocycles. We show that there is a degree reduction in a certain sense. In particular, 
we prove that computing the second cohomology group amounts to computing a certain first cohomology group. In Section 4, we prove Thm. \ref{mainintro} and Thm. \ref{mainintro1}. 
In the last section, we compute the cohomology groups when the isometric representation is the shift semigroup associated to a transitive action of the semigroup $P$. 

Although this work could, in  part,  be rightly considered an imitation of Arveson's work (\cite{Arv_Path}), we strongly believe that it is essential, it is of intrinsic interest, and it is worth the effort to record, for future reference, the exact cohomological obstructions
that appear in the higher dimensional case and to compute them in concrete situations. Also, there is a pedantic subtlety regarding the definition of the decomposabiliy of a product system (see Remark \ref{pedantic remark}), caused by the lack of total order,
whose resolution demands an explicit description of the structure of a generic decomposable product system, and we do not know of any elementary
argument that resolves this subtlety. These are a couple of reasons for  spelling out explicitly the cohomological issues.

\section{Preliminaries}
 Let $P$ be closed convex cone in $\R^{d}$ which we assume is spanning, i.e $P-P=\bbr^d$ and pointed, i.e. $P \cap -P=\{0\}$. Denote the interior of $P$ by $\Omega$. The cone $P$ will remain fixed throughout this paper. For $a,b\in P$, we write $a\leq b$ if $b-a\in P$ and $a<b$ if $b-a\in \Omega$. For an interior point $a \in \Omega$, the sequence $\{na\}_{n \in \bbn}$ is cofinal. 
 Since we will need the notion of algebraic product systems later in the paper, 
 we make formal definitions in this section. 

Let $\mathcal{H}$ be an infinite dimensional, separable Hilbert space. An \emph{algebraic $E_{0}$-semigroup} over $P$ on $B(\mathcal{H})$ is a semigroup $\{\alpha_{a}\}_{a\in P}$ of normal, unital *-endomorphisms of $B(\mathcal{H})$. Suppose $\alpha=\{\alpha_{a}\}_{a\in P}$ and $\beta=\{\beta_{a}\}_{a\in P}$ are algebraic $E_{0}$-semigroups  on $B(\mathcal{H})$. We say that $\alpha$ is \emph{cocycle conjugate} to $\beta$ if there exists a family $\{U_{a}\}_{a \in P}$ of unitaries on  $\mathcal{H}$ such that
\begin{enumerate}[(1)]
\item  for $a,b\in P$, $U_{a}\alpha_{a}(U_{b})=U_{a+b}$, and
\item for $a \in P$, $\beta_{a}(.)=U_{a}\alpha_{a}(.)U_{a}^{*}$.
\end{enumerate}

An \emph{algebraic product system} over $P$ is a field $E:=\{E(a)\}_{a\in P}$ of separable Hilbert spaces along with an associative multiplication defined  on the disjoint union $\displaystyle{\coprod_{a\in P}}E(a)$ such that
\begin{enumerate}[(1)]
\item  if $u\in E(a)$ and $v\in E(b)$, then $uv\in E(a+b)$, and
\item  the map $E(a)\otimes E(b) \ni u \otimes v \mapsto uv \in E(a+b)$  is  a unitary for every $a,b \in P$.
\end{enumerate}
The notion of isomorphism between two algebraic product systems is 'clear' and we do not make the formal definition here.

Suppose $\alpha=\{\alpha_{a}\}_{a\in P}$ is an algebraic $E_{0}$-semigroup  on $B(\mathcal{H})$. 
For $a\in P$, define
\begin{equation*}
E^{\alpha}(a):=\{T \in B(\mathcal{H}): \textrm{$\alpha_{a}(A)T=TA$ for every $A\in B(\mathcal{H})$}\}.
\end{equation*}
Then, $E^{\alpha}(a)$ is a separable Hilbert space with the inner product defined 
by \[\langle S|T\rangle=T^{*}S.\] Also, $E^{\alpha}:=\{E^{\alpha}(a)\}_{a\in P}$ is an algebraic product system with the product given by the usual multiplication.  The algebraic product system $E^{\alpha}$ is called the algebraic product system associated to $\alpha$. Just like in the measurable situation,  two algebraic $E_{0}$-semigroups  are cocycle conjugate if and only if their associated product systems are isomorphic.

Let $E=\{E(a)\}_{a \in P}$ be an algebraic product system over $P$. A section $e:P\to \displaystyle{\coprod_{a\in P}}E(a)$ of non-zero vectors is said to be \emph{left coherent} if for $a,b\in P$ with $a\leq b$, there exists $e(a,b)\in E(b-a)$ such that $e_{a}e(a,b)=e_{b}$. If $e$ is a left coherent section, the collection $\{e(a,b):a \leq b\}$ is called the set of propagators of $e$. We also have  
the equation
\begin{equation}
\label{equation for propogators}
e(a,b)e(b,c)=e(a,c)
\end{equation}
whenever $a \leq b \leq c$.
\begin{prop}
\label{essential rep}
Suppose $E$ is an algebraic product system with a  a left coherent section. Then, $E$ is isomorphic to the product system of an algebraic $E_0$-semigroup.
\end{prop}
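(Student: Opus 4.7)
The strategy imitates Arveson's inductive-limit construction, with the coherent section playing the role of a unit. Normalise the coherent section so that each propagator satisfies $\|e(a,b)\|=1$; since multiplication $E(a)\otimes E(b-a)\to E(b)$ is a unitary, the maps $V_{a,b}:E(a)\to E(b)$, $\xi\mapsto \xi\cdot e(a,b)$, are isometries, and the cocycle identity $e(a,b)e(b,c)=e(a,c)$ gives $V_{b,c}V_{a,b}=V_{a,c}$. Since $(P,\leq)$ is directed (as $a+b$ dominates both $a$ and $b$), form the Hilbert-space inductive limit $\mathcal{H}:=\varinjlim(E(a),V_{a,b})$ with canonical isometric inclusions $i_a:E(a)\to \mathcal{H}$.

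For fixed $t\in P$, the product-system unitaries $m^t_a:E(t)\otimes E(a)\to E(t+a)$ satisfy $V_{t+a,t+b}\,m^t_a = m^t_b\,(I_{E(t)}\otimes M^t_{a,b})$, where $M^t_{a,b}:E(a)\to E(b)$ is right multiplication by $e(t+a,t+b)\in E(b-a)$. By cofinality of $\{t+a:a\in P\}$ in $P$, passing to the limit yields a unitary $\mu_t: E(t)\otimes \mathcal{K}_t\to \mathcal{H}$ with $\mathcal{K}_t := \varinjlim(E(a), M^t_{a,b})$. The cocycle identity also shows that $\{e(t,t+a)\}_{a\in P}$ is itself a coherent section of $E$ with propagators $e(t+a,t+b)$, so $\mathcal{K}_t$ is built from $E$ by the same recipe and is in particular separable and infinite-dimensional. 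Choose unitaries $W_t:\mathcal{H}\to \mathcal{K}_t$ and define
\[
\alpha_t(X) := \mu_t\bigl(I_{E(t)}\otimes W_t X W_t^*\bigr)\mu_t^*, \qquad X\in B(\mathcal{H}).
\]
Each $\alpha_t$ is automatically a normal, unital $*$-endomorphism of $B(\mathcal{H})$.

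The semigroup identity $\alpha_s\alpha_t=\alpha_{s+t}$ forces the $W_t$'s to be chosen compatibly with the product-system associator $E(s)\otimes E(t)\cong E(s+t)$ and the iterated natural identification $\mathcal{K}_s\cong E(t)\otimes \mathcal{K}_{s+t}$ that arises by reapplying Step~2; arranging such a coherent family is, I expect, the main technical point of the proof and will be achieved by a bootstrap argument that exploits the abstract unitary equivalence of separable infinite-dimensional Hilbert spaces together with the associativity of the product-system multiplication (the case $W_0 = \mathrm{id}$ is clean, and the associator then dictates each $W_t$ up to choices on a cofinal set). To verify $E^\alpha\cong E$, for $\eta\in E(t)$ put $L_\eta(x):=\mu_t(\eta\otimes W_t x)$. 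Since $\mu_t$ and $W_t$ are unitaries, $\eta\mapsto L_\eta$ is linear and isometric, and a direct check gives $\alpha_t(A)L_\eta=L_\eta A$, so $L_\eta\in E^{\alpha_t}(t)$. Conversely, any intertwiner $T\in E^{\alpha_t}(t)$ is forced, by commutation with all of $B(\mathcal{H})$, to satisfy $\mu_t^* T = \eta\otimes W_t$ for a unique $\eta\in E(t)$, yielding a Hilbert-space isomorphism $E(t)\cong E^{\alpha_t}(t)$. The multiplicativity $L_\xi L_\eta = L_{\xi\eta}$ for $\xi\in E(s)$, $\eta\in E(t)$ then follows from the same compatibility of the $W_t$'s, producing the product-system isomorphism $E\cong E^\alpha$.
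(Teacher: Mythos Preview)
Your inductive-limit construction of $\mathcal{H}$ matches the paper's exactly. After that, however, you take a detour that the paper avoids, and the detour contains a genuine gap: you need the family $\{W_t\}$ to be chosen so that $\alpha_s\alpha_t=\alpha_{s+t}$, you correctly identify this as ``the main technical point,'' and then you do not carry it out. The sentence beginning ``arranging such a coherent family is, I expect, \ldots'' is a hope, not an argument; nothing in your outline forces the arbitrary unitaries $W_t:\mathcal{H}\to\mathcal{K}_t$ to satisfy the required compatibility, and an arbitrary choice will generally \emph{not} give a semigroup.

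The paper sidesteps this entirely by constructing an \emph{essential representation} of $E$ on $\mathcal{H}$ rather than the $E_0$-semigroup directly. For $u\in E(a)$ one sets $\theta_a(u)(i_b(v)):=i_{a+b}(uv)$; the identity $\theta_a(u)\theta_b(v)=\theta_{a+b}(uv)$ is then immediate from associativity of the product in $E$, so no coherent family of auxiliary unitaries is needed. The only thing left to check is that $\theta$ is essential, i.e., $[\theta_a(E(a))\mathcal{H}]=\mathcal{H}$ for each $a$, and this follows by factoring vectors in $E(b)$ for $b\geq a$ as products through $E(a)$. The passage from an essential representation to an algebraic $E_0$-semigroup whose product system is $E$ is then the standard fact you are implicitly using when you write $\alpha_t(X)=\mu_t(I\otimes W_tXW_t^*)\mu_t^*$, but with $\theta$ in hand one takes $\alpha_a(X)=\sum_i\theta_a(e_i)X\theta_a(e_i)^*$ for any orthonormal basis $\{e_i\}$ of $E(a)$, and the semigroup law is forced by the multiplicativity of $\theta$. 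In short: aim for the representation, not the semigroup; left multiplication on the inductive limit gives it to you for free.
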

\begin{proof}
It suffices to construct an essential representation of $E$ on a separable Hilbert space. The proof is an application of the standard inductive limit construction, due to Arveson, that yields a quick proof of the fact that a $1$-parameter product system with a unit has an essential representation.

Suppose $e=\{e_{a}\}_{a\in P}$ is a left coherent section of $E$. We may assume that $||e_{a}||=1$ for each $a\in P$. %We construct a separable Hilbert space using this left coherent section, by an inductive limit technique.
 Suppose $a,b\in P$ and $a\leq b$. Consider the isometry $V_{b,a}:E(a)\to E(b)$ defined by \[V_{b,a}(u)=ue(a,b).\] If $a,b,c\in P$ are such that $a\leq b\leq c$, then $V_{c,a}=V_{c,b}V_{b,a}$. Denote the inductive limit of this directed system  of Hilbert spaces by $\mathcal{H}$. For $a\in P$, let $i_{a}:E(a)\to\mathcal{H}$ be the inclusion map.
 Fix $a_0\in\Omega$. The fact that $\{na_0\}_{n \in \bbn}$ is cofinal implies  that $\bigcup_{n\in\mathbb{N}}i_{na_0}(E(na_0))$ is dense in $\mathcal{H}$. Hence, $\mathcal{H}$ is separable. 

 Let $a\in P$ and $u\in E(a) $. Define $\theta_{a}(u) \in  B(\mathcal{H})$ by \[\theta_a(u)(i_{b}(v))=i_{a+b}(uv)\] for $b\in P$ and $v\in E(b)$. Then, $\theta=\{\theta_a\}_{a \in P}$ defines a representation  of $E$ on $\mathcal{H}$. 

 We now prove that the representation $\theta$ is essential. Let $a\in P$ be given. It suffices to show that $i_{b}(u)\in [\theta_a(E(a))\mathcal{H}]$ for any $b\geq a $ and for $u \in E(b)$. Here, $[\theta_a(E(a))\mathcal{H}]$ denotes the closed linear span of $\{\theta_a(x)\xi: x \in E(a), \xi \in \mathcal{H}\}$.
  Since vectors of the form $u=vw$, where $v\in E(a)$ and $w\in E(b-a)$, form a total set in $E(b)$, it suffices to prove the statement for such vectors. But if $u=vw$ with $v\in E(a)$ and $w\in E(b-a)$, then  $i_{b}(u)=\theta_{a}(v)i_{b-a}(w)\in \theta_a(E(a))\mathcal{H}$.  The proof is complete.
\end{proof}

\begin{prop}
\label{over omega implies over P}Suppose $\alpha=\{\alpha_{a}\}_{a\in P}$, and $\beta=\{\beta_{a}\}_{a\in P}$ are two algebraic $E_{0}$-semigroups  on $B(\mathcal{H})$, where $\mathcal{H}$ is a  separable Hilbert space. Then, if $\alpha$ and $\beta$ are cocycle conjugate over $\Omega$, then they are cocycle conjugate over $P$.
\end{prop}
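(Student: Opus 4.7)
The plan is to extend the interior cocycle $\{U_a\}_{a \in \Omega}$ to a cocycle on all of $P$ by the canonical formula
\[
U_a := U_{a+c}\,\alpha_a(U_c)^{*},
\]
where $c \in \Omega$ is arbitrary; note that $a + c \in \Omega$ because $P + \Omega \subseteq \Omega$. The verification then splits into five steps: (i) independence of the choice of $c$; (ii) agreement with the original $U_a$ when $a \in \Omega$; (iii) unitarity of $U_a$; (iv) the cocycle identity $U_a\,\alpha_a(U_b) = U_{a+b}$ for all $a,b \in P$; and (v) the intertwining relation $\beta_a(\cdot) = U_a\,\alpha_a(\cdot)\,U_a^{*}$ for all $a \in P$.

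Steps (i)--(iv) are essentially bookkeeping from the interior cocycle identity. For (i), given $c_1, c_2 \in \Omega$, I would split the interior point $a + c_1 + c_2$ in two ways as $(a + c_1) + c_2$ and $(a + c_2) + c_1$; the interior cocycle identity applied to the resulting interior pairs, together with $\alpha_{a + c_i} = \alpha_a \circ \alpha_{c_i}$ and $U_{c_1}\,\alpha_{c_1}(U_{c_2}) = U_{c_1 + c_2} = U_{c_2}\,\alpha_{c_2}(U_{c_1})$, forces the two candidate values of $U_a$ to coincide. Step (ii) is immediate from the interior cocycle, and step (iii) is clear. For step (iv), the computation becomes transparent if one uses $b + c$ (rather than $c$) as the interior translate defining $U_a$: writing $U_a = U_{a + b + c}\,\alpha_a(U_{b + c})^{*}$ and $U_b = U_{b + c}\,\alpha_b(U_c)^{*}$, the factor $U_{b + c}$ telescopes and one obtains $U_a\,\alpha_a(U_b) = U_{a + b + c}\,\alpha_{a+b}(U_c)^{*} = U_{a+b}$.

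The main obstacle lies in step (v), since for boundary $a$ there is no direct description of $\beta_a$ available. The strategy is to pass from $a$ to an interior translate $a + c$ and exploit the injectivity of $\alpha_c$. Using commutativity of $P$ together with $\beta_{a + c} = \beta_c \circ \beta_a$ and the intertwining relations on the interior, I would first derive
\[
\alpha_c\bigl(\beta_a(x)\bigr) \;=\; U_c^{*}\,U_{a + c}\,\alpha_{a + c}(x)\,U_{a + c}^{*}\,U_c.
\]
Second, by evaluating the interior cocycle identity at the pairs $(c, a + c)$ and $(a + c, c)$ and comparing the two resulting expressions for $U_{a + 2c}$, I would establish the auxiliary formula $\alpha_c(U_a) = U_c^{*}\,U_{a + c}$ valid for all $a \in P$ and $c \in \Omega$. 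Substituting this into $\alpha_c\bigl(U_a\,\alpha_a(x)\,U_a^{*}\bigr) = \alpha_c(U_a)\,\alpha_{a+c}(x)\,\alpha_c(U_a)^{*}$ yields precisely the right-hand side above, so $\beta_a(x)$ and $U_a\,\alpha_a(x)\,U_a^{*}$ have equal image under $\alpha_c$. Since $\alpha_c$ is a unital normal $*$-endomorphism of $B(\mathcal{H})$, its kernel is an ultraweakly closed two-sided ideal not containing the identity and is therefore $\{0\}$; injectivity of $\alpha_c$ then gives $\beta_a(x) = U_a\,\alpha_a(x)\,U_a^{*}$ for every $x \in B(\mathcal{H})$, completing the argument.
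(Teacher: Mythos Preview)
Your proof is correct and follows essentially the same approach as the paper: extend the interior cocycle by $U_a := U_{a+c}\alpha_a(U_c)^*$, verify well-definedness and the cocycle identity on $P$, and deduce the intertwining relation at boundary points by passing to an interior translate and invoking injectivity of a unital normal $*$-endomorphism of $B(\mathcal{H})$. The only cosmetic difference is that the paper cancels via $\beta_{a_0}$ (using $\beta_{a_0}\circ\widetilde{\beta}_a=\beta_{a_0}\circ\beta_a$) while you cancel via $\alpha_c$; since $\beta_c=U_c\alpha_c(\cdot)U_c^*$ on $\Omega$, the two are equivalent.
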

\begin{proof} 
The hypothesis means that there exists a family of unitaries  $U=\{U_{a}\}_{a\in \Omega}$  in $B(\mathcal{H})$ such that $U_{a}\alpha_{a}(U_{b})=U_{a+b}$ for $a,b\in\Omega$, and $\beta_{a}(.)=U_{a}\alpha_a(.)U_{a}^{*}$ for $a\in\Omega$. 

Fix $a_{0}\in \Omega$. For $a\in P$, define
\[\widetilde{U}_{a}=U_{a+a_{0}}\alpha_{a}(U_{a_{0}})^{*}.\]

 Suppose $a_{0}$, $b_{0}\in\Omega$ and $a_{0}<b_{0}$. For $a\in P$, calculate as follows to observe that 
 \begin{align*}
 U_{a+b_{0}}^{*}U_{a+a_{0}}=& \alpha_{a+a_{0}}(U_{b_{0}-a_{0}})^{*}U_{a+a_{0}}^{*}U_{a+a_{0}}\\=&\alpha_{a+a_{0}}(U_{b_{0}-a_{0}}^{*})\\
 =&\alpha_{a}(\alpha_{a_{0}}(U_{b_{0}-a_{0}}^{*})U_{a_{0}}^{*}U_{a_{0}})\\
 =&\alpha_{a}(U_{b_{0}}^{*}U_{a_{0}})=\alpha_{a}(U_{b_{0}})^{*}\alpha_{a}(U_{a_{0}}).
 \end{align*}
 Hence, $U_{a+b_0}\alpha_a(U_{b_0})^{*}=U_{a+a_0}\alpha_a(U_{a_0})^{*}$.
Now for any $a_{0},b_{0}\in\Omega$, let $c_0\in\Omega$ be such that $a_0<c_0$ and $b_0<c_0$. Then, by the above calculation,
\[
U_{a+a_{0}}\alpha_{a}(U_{a_{0}})^{*}=U_{a+c_0}\alpha_{a}(U_{c_0})^{*}
               =U_{a+b_{0}}\alpha_{a}(U_{b_{0}})^{*}.
\]
Thus, the definition of $\widetilde{U}$ is independent of the choice of the interior point $a_{0}$.

It is clear that $\widetilde{U}_a=U_a$ if $a \in \Omega$. Hence, for $a \in P$ and $a_0 \in \Omega$, $\widetilde{U}_{a+a_{0}}=\widetilde{U}_{a}\alpha_{a}(\widetilde{U}_{a_{0}})$.
Also, for $a \in P$ and $a_0 \in \Omega$, 
\begin{align*}\widetilde{U}_{a_{0}}\alpha_{a_{0}}(\widetilde{U}_{a})=&\widetilde{U}_{a_{0}}\alpha_{a_{0}}(\widetilde{U}_{a+a_{0}}\alpha_{a}(\widetilde{U}_{a_{0}}^{*}))\\
                   =&\widetilde{U}_{(a+a_{0})+a_{0}}\alpha_{a+a_{0}}(\widetilde{U}_{a_0}^{*})\\=&\widetilde{U}_{a+a_{0}}.
\end{align*}
Let $a,b \in P$ be given. Observe that
\begin{align*}
\widetilde{U}_{a}\alpha_{a}(\widetilde{U}_{b})=& \widetilde{U}_{a+a_{0}}\alpha_{a}(\widetilde{U}_{a_{0}})^{*}\alpha_{a}(\widetilde{U}_{b+a_{0}})\alpha_{a+b}(\widetilde{U}_{a_{0}})^{*}\\
=& \widetilde{U}_{a+a_{0}}\alpha_{a+a_{0}}(\widetilde{U}_{b})\alpha_{a+b}(\widetilde{U}_{a_{0}})^{*}\\
=&\widetilde{U}_{a+b+a_0}\alpha_{a+b}(\widetilde{U}_{a_0})^{*}\\
=& \widetilde{U}_{a+b}.
\end{align*}
 
Define an algebraic $E_{0}$-semigroup $\widetilde{\beta}$ over $P$  on  $B(\mathcal{H})$ by $\widetilde{\beta}_{a}(.)=\widetilde{U}_{a}\alpha_{a}(.)\widetilde{U}_{a}^{*}$ for $a\in P$. Clearly, $\widetilde{\beta}_{a}=\beta_{a}$ for any $a\in \Omega$. In particular,  
$\widetilde{\beta}_{a+a_{0}}=\beta_{a+a_{0}}$ for $a\in P$, and
$\widetilde{\beta}_{a_{0}}=\beta_{a_{0}}$.
  Since 
$\beta_{a_{0}}\circ \widetilde{\beta}_{a}=\beta_{a_{0}} \circ \beta_{a}$ for $a\in P$ and $\beta_{a_0}$ is injective, $\widetilde{\beta}_a=\beta_a$ for every $a \in P$. This completes the proof. 
\end{proof}

The following corollary is immediate from Prop. \ref{essential rep} and Prop. \ref{over omega implies over P}.
\begin{corollary}
\label{over omega implies over P1}Let $E=\{E(a)\}_{a\in P}$ and $F=\{F(a)\}_{a\in P}$ be two algebraic product systems, each possessing a left coherent section. If $E$ and $F$ are isomorphic over $\Omega$, then they are isomorphic over $P$.
\end{corollary}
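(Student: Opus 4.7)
The plan is to lift the statement from product systems to algebraic $E_0$-semigroups, apply Prop.~\ref{over omega implies over P}, and descend. By Prop.~\ref{essential rep}, each of $E$ and $F$ is isomorphic (over $P$) to the algebraic product system of an algebraic $E_0$-semigroup acting on some infinite-dimensional separable Hilbert space. Since any two such Hilbert spaces are unitarily equivalent, after conjugating one of the semigroups by a unitary between the two underlying Hilbert spaces (which replaces its associated product system by a canonically isomorphic one), I may assume that I have algebraic $E_0$-semigroups $\alpha=\{\alpha_a\}_{a\in P}$ and $\beta=\{\beta_a\}_{a\in P}$ acting on a common $B(\mathcal{H})$ with $E\cong E^{\alpha}$ and $F\cong E^{\beta}$ over $P$.

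Under this identification, the hypothesis becomes $E^{\alpha}\cong E^{\beta}$ over $\Omega$. Invoking the correspondence noted just before Prop.~\ref{essential rep}, namely that isomorphism of the associated product systems is equivalent to cocycle conjugacy of the $E_0$-semigroups, applied to the sub-semigroup $\Omega$, I obtain a family $\{U_a\}_{a\in\Omega}$ of unitaries on $\mathcal{H}$ witnessing that $\alpha$ and $\beta$ are cocycle conjugate over $\Omega$. Prop.~\ref{over omega implies over P} then upgrades this to a cocycle conjugacy over all of $P$, which in turn produces an isomorphism $E^{\alpha}\cong E^{\beta}$ of algebraic product systems over $P$. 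Chaining this with the two isomorphisms $E\cong E^{\alpha}$ and $F\cong E^{\beta}$ yields $E\cong F$ over $P$.

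The only step worth pausing over is the assertion that the equivalence between product-system isomorphisms and cocycle conjugacies remains valid when the parameter set is $\Omega$ rather than $P$, despite $\Omega$ lacking an identity. This is not a real obstacle: passing from an isomorphism $\theta=\{\theta_a\}_{a\in\Omega}$ to the corresponding intertwining unitaries is a pointwise construction that uses only the multiplications $E^{\alpha}(a)\otimes E^{\alpha}(b)\to E^{\alpha}(a+b)$ and the essential-representation property furnished by Prop.~\ref{essential rep}, both available on $\Omega$. With this mild observation in hand, the corollary reduces to a direct composition of Prop.~\ref{essential rep} and Prop.~\ref{over omega implies over P}, as claimed.
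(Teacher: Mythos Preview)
Your argument is correct and follows exactly the route the paper intends: the paper's proof consists of the single sentence that the corollary is immediate from Prop.~\ref{essential rep} and Prop.~\ref{over omega implies over P}, and you have simply spelled out how those two results chain together. Your added remark about the product-system/cocycle-conjugacy correspondence remaining valid over $\Omega$ is a reasonable point to make explicit, though the paper treats it as evident.
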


Let $E:=\{E(a)\}_{a \in P}$ be an algebraic product system. 
Let $b\in P$. A non-zero vector $u\in E(b)$ is said to be decomposable if for any $a\in P$ such that $a\leq b$, there exists $u_{a}\in E(a)$ and $u(a,b)\in E(b-a)$ such that $u=u_{a} u(a,b)$.  For $a\in P$, let  $D(a)$ be the set of all decomposable vectors in $E(a)$. The algebraic product system $E$ is said to be \emph{decomposable} if the following conditions hold.
\begin{enumerate}[(1)]
\item For each $a \in P$, $D(a)$ is total in $E(a)$, and
\item for $a,b\in P$, $D(a)D(b)= D(a+b)$.
\end{enumerate}

We end this section by recalling the definition of a `genuine' $E_0$-semigroup 
and a `genuine' product system. Let $\alpha:=\{\alpha_a\}_{a \in P}$ be an algebraic $E_0$-semigroup on $B(\mathcal{H})$. We say that $\alpha$ is a `genuine'
$E_0$-semigroup, or simply an \emph{$E_0$-semigroup} if for $A \in B(\mathcal{H})$ and $\xi,\eta \in \mathcal{H}$, the map 
\[
P \ni  a \to \langle\alpha_a(A)\xi|\eta \rangle \in \mathbb{C}
\]
is measurable. We will always use the adjective `algebraic' to refer to $E_0$-semigroups that are not `genuine'.

Let $E:=\{E(a)\}_{a \in P}$ be an algebraic product system. Suppose, in addition, $E$ is a measurable field of Hilbert spaces with $\mathcal{S}$ being the space of measurable sections. We say that $E$ together with $\mathcal{S}$ is a `genuine' product system, or simply a \emph{product system} if for $r,s,t \in \mathcal{S}$, the map 
\[
P \times P \ni (a,b) \to \langle r(a)s(b)|t(a+b)\rangle \in \mathbb{C}\]
is measurable. Again, we always use the adjective `algebraic' to refer to product systems that are not genuine. 

\section{The space of cocycles}
In this section, we will discuss about the space of cocycles associated to a strongly continuous semigroup of isometries.  The results from this section will be put to use in subsequent sections.
Let $V=\{V_{a}\}_{a\in P}$ be a pure isometric representation of $P$ on a Hilbert space $\mathcal{H}$. The isometric representation $V$ is fixed for the rest of this section. Let us recall the definition of $2$-cocycles mentioned in the introduction.

\begin{definition}A map $\Gamma:P\times P\to \mathcal{H}$ is called a \emph{2-cocycle} for $V$ (or simply a $2$-cocycle) if
\begin{enumerate}[(1)]
\item for $a,b\in P$, $\Gamma(a,b)\in Ker(V_{b}^{*})$,
\item the map $\Gamma$ satisfies the following  identity
\begin{equation*}\Gamma(a,b)+V_{b}\Gamma(a+b,c)-\Gamma(a,b+c)-V_{b}\Gamma(b,c)=0
\end{equation*} for every $a,b,c\in P$, and
\item for each $\xi\in\mathcal{H}$, the map $P\times P\ni (a,b)\to\langle\Gamma(a,b)|\xi\rangle \in \mathbb{C}$ is Borel.
\end{enumerate}
\end{definition}
 Let $Z^{2}(P,\mathcal{H})$ be the space of all 2-cocycles. A 2-cocycle $\Gamma$ is said to be a \emph{coboundary} if there exists a Borel map $P\ni a\to \xi_{a}\in\mathcal{H}$ such that $\xi_{a}\in Ker(V_{a}^{*})$ for each $a\in P$, and
\begin{equation*}\label{gamma}
V_{a}\Gamma(a,b)= \xi_{a+b}-\xi_{a}-V_{a}\xi_{b}
\end{equation*} for every $a,b\in P$. The space of coboundaries, denoted $B^{2}(P,\mathcal{H})$, is a subspace of $Z^{2}(P,\mathcal{H})$. Define the cohomology group $H^{2}(P,\mathcal{H})$ by 
\begin{equation*}
H^{2}(P,\mathcal{H}):=\frac{Z^{2}(P,\mathcal{H})}{B^{2}(P,\mathcal{H})}.
\end{equation*}
To denote the dependence of $H^{2}(P,\mathcal{H})$ on $V$, we denote $H^{2}(P,\mathcal{H})$ by $H^{2}(P,\mathcal{H},V)$, or by $H^{2}(P,V)$.

\begin{remark}
\label{continuity of coherence}
Let $\xi:P \to \mathcal{H}$ be a map. The map $\xi$ is said to be coherent if for $a \leq b$, $\xi_a=E_{a}^{\perp}\xi_b$. Here, $E_a=V_aV_{a}^{*}$. Note that if $\xi:P \to \mathcal{H}$ is coherent, then $\xi$ is continuous. 
To see this, fix $b \in \Omega$. Set \[F_n:=\{a \in P: a \leq nb\}.\]  For $a \in F_n$, the equality $\xi_a=E_{a}^{\perp}\xi_{nb}$ implies that the map 
\[
F_n \ni a \to \xi_a \in \mathcal{H}
\]
is  continuous. Since $\{F_n\}_{n \geq 1}$ is an open cover of  $P$, the map 
$\xi$ is continuous. It is also clear that $\displaystyle \lim_{a \to 0}\xi_a=0$.

\end{remark}

Next, we introduce the coordinate free  analogue  of the space $L^{2}_{loc}((0,\infty),\mathcal{C})$ considered in Section 5.3 of \cite{arveson}.
For each $a\in P$, let $E_{a}$ denote the range projection of $V_{a}$, i.e. $E_{a}=V_{a}V_{a}^{*}$. 
Let $\displaystyle \mathcal{D}_{V}:=\bigcup_{a\in P}Ker(V_{a}^{*})$.  Since $V$ is pure, $\mathcal{D}_V$ is a dense subspace of $\mathcal{H}$. Set
\begin{equation*}
\mathcal{L}_{V}:=\{\varphi:D_{V}\to\mathbb{C}: \textrm{ $\varphi$ is linear and $\varphi|_{Ker(V_a^*)}$ is   bounded  for each $a\in P$}\}.
\end{equation*}

Let $a\in P$. For $\varphi \in \mathcal{L}_V$, define $||\varphi||_{a}$ by
\begin{equation*}
||\varphi||_{a}:=\big|\big|\varphi|_{Ker(V_a^*)}\big|\big|.
\end{equation*} 
Then, $||.||_{a}$ is a seminorm on $\mathcal{L}_{V}$. The collection of seminorms $\{||.||_{a}: a \in P\}$ induce a locally convex topology $\tau_{V}$ on $\mathcal{L}_{V}$. If $a\leq b$, then $Ker( V_{a}^{*})\subset Ker(V_{b}^{*})$. Hence, if $a\leq b$, then  $||\varphi||_{a}\leq ||\varphi||_{b}$ for every $\varphi \in \mathcal{L}_V$.  For $b\in \Omega$, the 
 topology defined by the collection of  seminorms $\{||.||_{nb}: n \in\bbn \}$ coincides with $\tau_{V}$. It is not difficult to prove that $(\mathcal{L}_{V},\tau_{V})$ is a Frechet space.

Define a   representation $T$ of $P$ on $\mathcal{L}_{V}$ as follows. For $a \in P$, let $T_a$ be the linear operator on $\mathcal{L}_V$ defined by the equation
\begin{equation*}
T_{a}\varphi(\xi):=\varphi(V_{a}\xi)
\end{equation*} for  $\varphi\in\mathcal{L}_{V}$ and $\xi\in D_{V}$.

\begin{definition}
\label{definition of 1-cocycle}
Let $\phi:P\to \mathcal{L}_{V}$ be a  map. Then, $\phi$ is called a \emph{1-cocycle} if
\begin{enumerate}[(1)]
\item $\phi_{a+b}=\phi_{a}+T_{a}\phi_{b}$ for each $a,b\in P$, and
\item the map $P\ni a\to \phi_{a}(\xi)\in \bbc$ is measurable for every $\xi\in D_{V}$.
\end{enumerate}
\end{definition}
 The above definition is the coordinate free version of Defn. 5.3.1 of \cite{arveson}.

\begin{remark}
\label{continuity of cocyclesblah}
Let $\phi:P \to \mathcal{L}_V$ be a $1$-cocycle. Then, $\phi$ is continuous. The proof is  the same as that of the $1$-parameter case given in Lemma 5.3.5 of \cite{arveson}. 
\end{remark}

Let $Z^{1}(P,\mathcal{L}_{V})$ denote the vector space of all 1-cocycles. 
A 1-cocycle $\phi$ is called a coboundary if there exists $\varphi\in \mathcal{L}_{V}$ such that for $a \in P$,
\begin{equation*}
\phi_{a}=T_{a}\varphi-\varphi.
\end{equation*}
Let $B^{1}(P,\mathcal{L}_{V})$ be the set of coboundaries. Note that $Z^{1}(P,\mathcal{L}_{V})$ is a  vector space, and $B^{1}(P,\mathcal{L}_{V})$ is a subspace. Denote the quotient $\frac{Z^{1}(P,\mathcal{L}_{V})}{B^{1}(P,\mathcal{L}_{V})}$ by $H^{1}(P,\mathcal{L}_{V})$. 

Suppose $\phi\in Z^{1}(P,\mathcal{L}_{V})$. For $a,b\in P$, let $\Gamma^{\phi}(a,b)\in Ker(V_{b}^{*})$ be such that
\begin{equation*}
 \langle\xi|\Gamma^{\phi}(a,b)\rangle:=\phi_a(\xi)
\end{equation*} for $\xi\in Ker(V_{b}^{*})$.
We denote the map $P \times P \ni (a,b)\to \Gamma^{\phi}(a,b) \in \mathcal{H}$ by $\Gamma^{\phi}$. It is clear that $\Gamma^{\phi}$
is measurable.

The following proposition is a more formal, coordinate independent, cohomological expression of 
Lemma 5.5.9 of \cite{arveson}.

\begin{prop} \label{equiv}
Keep the foregoing notation.
Suppose $\phi\in Z^{1}(P,\mathcal{L}_{V})$. Then, $\Gamma^{\phi}$ is a 2-cocycle.
Conversely, suppose $\Gamma:P\times P\to\mathcal{H}$ is a 2-cocycle, then there exists a unique 1-cocycle $\phi$ such that $\Gamma=\Gamma^{\phi}$. 
Also, $\phi$ is a coboundary if and only if $\Gamma^{\phi}$ is a coboundary. Consequently,  the map $\theta:H^1(P,\mathcal{L}_V) \to H^{2}(P,\mathcal{H})$ defined by $\theta([\phi])=[\Gamma^{\phi}]$ is an isomorphism. 
\end{prop}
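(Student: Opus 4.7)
The plan is to construct an explicit linear inverse to the assignment $\phi \mapsto \Gamma^\phi$ at the level of cocycles, then verify that this bijection carries coboundaries to coboundaries in both directions. Since everything in sight is linear, the induced map on cohomology classes will then automatically be an isomorphism.

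Given a $1$-cocycle $\phi$, the element $\Gamma^\phi(a,b) \in Ker(V_b^*)$ exists by Riesz representation since $\phi_a|_{Ker(V_b^*)}$ is bounded, and measurability of $\Gamma^\phi$ is inherited from that of $a \mapsto \phi_a(\xi)$. To verify the $2$-cocycle identity, I would pair the expression
\[
\Gamma^\phi(a,b) + V_b\Gamma^\phi(a+b,c) - \Gamma^\phi(a,b+c) - V_b\Gamma^\phi(b,c)
\]
with an arbitrary test vector $\xi \in Ker(V_{b+c}^*)$ and decompose $\xi = \eta + V_b\zeta$, where $\eta := (I-E_b)\xi \in Ker(V_b^*)$ and $\zeta := V_b^*\xi$ (which lies in $Ker(V_c^*)$ since $V_c^*\zeta = V_{b+c}^*\xi = 0$). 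Each of the four terms then collapses to a value of $\phi$ applied to $\eta$ or $\zeta$, and the full sum reduces to $\phi_{a+b}(\zeta) - \phi_b(\zeta) - T_b\phi_a(\zeta)$, which vanishes by the $1$-cocycle relation.

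For the inverse, given a $2$-cocycle $\Gamma$, I would define $\phi_a(\xi) := \langle \xi \,|\, \Gamma(a,b)\rangle$ whenever $\xi \in Ker(V_b^*)$. Well-definedness across different choices of $b$ reduces to showing that $\Gamma(a,d) - \Gamma(a,b) \in Ran(V_b)$ whenever $b \leq d$, which is immediate from the $2$-cocycle identity applied with third argument $d - b$. The $1$-cocycle identity $\phi_{a+b} = \phi_a + T_a\phi_b$ then follows by evaluating the $2$-cocycle identity with arguments permuted to $(b,a,d)$: this yields $V_a(\Gamma(a+b,d) - \Gamma(a,d)) = \Gamma(b,a+d) - \Gamma(b,a)$, so pairing with $\xi \in Ker(V_d^*)$ and using $\Gamma(b,a) \in Ker(V_a^*)$ gives $\phi_{a+b}(\xi) - \phi_a(\xi) = \langle V_a\xi \,|\, \Gamma(b,a+d)\rangle = T_a\phi_b(\xi)$. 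Measurability is inherited from $\Gamma$, and the relation $\Gamma^\phi = \Gamma$ is built into the definition, forcing uniqueness of $\phi$.

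The coboundary equivalence uses the same Riesz duality one degree down. Starting from $\phi_a = T_a\varphi - \varphi$ with $\varphi \in \mathcal{L}_V$, let $\xi_a \in Ker(V_a^*)$ be the Riesz representative of $\varphi|_{Ker(V_a^*)}$; nesting of restrictions immediately yields the coherence $\xi_a = (I-E_a)\xi_{a+b}$, and a short computation gives $\Gamma^\phi(a,b) = V_a^*\xi_{a+b} - \xi_b$, so left-multiplying by $V_a$ and using the coherence produces the coboundary equation for $\Gamma^\phi$. Conversely, given a Borel section $\{\xi_a\}$ with $V_a\Gamma^\phi(a,b) = \xi_{a+b} - \xi_a - V_a\xi_b$, applying $V_a^*$ to both sides recovers $\Gamma^\phi(a,b) = V_a^*\xi_{a+b} - \xi_b$, and the coherence of $\{\xi_a\}$ (this time obtained by applying the coboundary equation to $\Gamma^\phi(b,d-b)$) makes $\varphi(\eta) := \langle \eta \,|\, \xi_b\rangle$ for $\eta \in Ker(V_b^*)$ a well-defined element of $\mathcal{L}_V$ satisfying $\phi_a = T_a\varphi - \varphi$. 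The main obstacle I anticipate is purely bookkeeping in the $1$-cocycle verification: the $2$-cocycle identity must be permuted to precisely the $(b,a,d)$ ordering so that an application of $V_a^*$ is legitimate, and one has to notice that the extraneous term $\Gamma(b,a)$ produced by the manipulation dies against the test vector $V_a\xi$ exactly because $\Gamma(b,a) \in Ker(V_a^*)$.
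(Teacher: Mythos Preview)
Your proposal is correct and follows essentially the same Riesz-duality approach as the paper: the paper's computations differ only cosmetically (it verifies the $2$-cocycle identity by a direct chain of equalities on $\langle\xi|\Gamma(a,b)\rangle+\langle\xi|V_b\Gamma(a+b,c)\rangle$ rather than decomposing the test vector, and obtains the $1$-cocycle identity by applying $V_b^*$ to the unpermuted $(a,b,c)$ relation instead of permuting to $(b,a,d)$ and pairing with $V_a\xi$). One small omission to patch: in the forward coboundary direction you should note that the coherence $\xi_a=(I-E_a)\xi_{a+b}$ forces $a\mapsto\xi_a$ to be continuous and hence Borel, which is needed for $\Gamma^\phi$ to lie in $B^2(P,\mathcal{H})$ in the measurable sense.
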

\begin{proof}
Let $\phi\in Z^{1}(P,\mathcal{L}_{V})$. 
 Then, \[ \phi_{a+b}(\xi)=\phi_{a}(\xi)+\phi_{b}(V_{a}\xi)\] for any $a,b\in P$ and $\xi \in D_{V}$.
 Denote $\Gamma^{\phi}$ simply by $\Gamma$.
Observe that
\begin{align*}
\langle\xi|\Gamma(a,b)\rangle+\langle \xi|V_b\Gamma(a+b,c)\rangle&= \phi_{a}(E_{b}^{\perp}\xi)+\phi_{a+b}(E_{c}^{\perp}V_{b}^{*}\xi)\\
 &=\phi_{a}(E_{b}^{\perp}\xi)+\phi_{b}(E_{c}^{\perp}V_{b}^{*}\xi)+T_b\phi_{a}(E_{c}^{\perp}V_{b}^{*}\xi) \\
 %&~~~(\textrm{by the cocycle identity})\\
 &=\phi_{a}(E_{b}^{\perp}\xi)+\phi_{b}(E_{c}^{\perp}V_{b}^{*}\xi)+\phi_{a}(V_{b}E_{c}^{\perp}V_{b}^{*}\xi)\\
            &= \phi_{a}(E_{b+c}^{\perp}\xi)+\phi_{b}(E_{c}^{\perp }V_{b}^{*}\xi)\\
            &=\langle\xi|\Gamma(a,b+c)\rangle+\langle \xi|V_{b}\Gamma(b,c)\rangle
\end{align*} for any $\xi\in D_V$.
Thus, $\Gamma$ satisfies the cocycle identity.

Let $\Gamma$ be a $2$-cocycle. Fix $a\in P$.  Define $\phi_{a}:D_{V}\to \mathbb{C}$ by
\begin{equation*}
\phi_{a}(\xi):=\langle\xi|\Gamma(a,b)\rangle
\end{equation*} 
whenever $\xi \in Ker(V_b^*)$.
We claim that $\phi$ is a well-defined 1-cocycle and $\Gamma=\Gamma^{\phi}$.
Suppose $\xi\in ker(V_{b}^{*})$ and $\xi\in Ker(V_{c}^{*})$ for some $b, c\in P$.
By the cocycle identity, we have
$\Gamma(a,b)=E_{b}^{\perp}\Gamma(a,b+c)$ and $\Gamma(a,c)=E_{c}^{\perp}\Gamma(a,b+c)$.
Then, \begin{align*}
\langle \xi|\Gamma(a,b)\rangle=&\langle \xi|E_{b}^{\perp}\Gamma(a,b+c)\rangle\\
=&\langle E_{b}^{\perp}\xi|\Gamma(a,b+c) \rangle\\
=&\langle \xi|\Gamma(a,b+c)\rangle\\
=&\langle E_{c}^{\perp}\xi|\Gamma(a,b+c)\rangle\\
=&\langle \xi|E_{c}^{\perp}\Gamma(a,b+c)\rangle\\
=&\langle \xi|\Gamma(a,c)\rangle.
    \end{align*}
    This proves that $\phi_{a}$ is well-defined for each $a\in P$. By definition, it is clear that $\phi_{a}$ is bounded on $Ker(V_{b}^{*})$ for each $b\in P$. Therefore, $\phi_a \in \mathcal{L}_V$ for $a \in P$. Let $\phi:P\to\mathcal{L}_{V}$ be the map $P\ni a\to \phi_{a} \in \mathcal{L}_V$.
As the map \[P \times P \ni (a,b) \to \langle \xi|\Gamma(a,b)\rangle \in \mathbb{C}\] is measurable for every  $\xi\in \mathcal{H}$, it follows that  $P\ni a\to \phi_{a}(\xi)\in\mathbb{C}$ is Borel for each $\xi\in \mathcal{D}_V$. 
 
We now show that $\phi$ satisfies the cocycle identity. Let $c\in P$, and  let $\xi\in Ker(V_{c}^{*})$. Then, for $a,b \in P$,
\begin{align*}
\phi_{a+b}(\xi)=&\langle\xi|\Gamma(a+b,c)\rangle \\
=&\langle\xi|V_{b}^{*}\Gamma(a,b+c)\rangle+\langle\xi|\Gamma(b,c)\rangle~~(\textrm{by the  cocycle identity})\\
=& T_{b}\phi_{a}(\xi) +\phi_{b}(\xi).
\end{align*}
Hence, $\phi$ is a $1$-cocycle. Clearly, $\Gamma=\Gamma^{\phi}$.

It is clear that the map $Z^{1}(P,\mathcal{L}_V) \ni \phi \to \Gamma^{\phi} \in Z^{2}(P,\mathcal{H})$ is $1$-$1$, and we have just shown that it is onto. 
 Suppose $\phi \in B^{1}(P,\mathcal{L}_{V})$. Then, there exists $\varphi\in\mathcal{L}_{V}$ such that
 \begin{equation*}
 \phi_{a}=T_{a}\varphi-\varphi.
 \end{equation*} for each $a\in P$.
 Then, for $\xi\in D_{V}$,
 \begin{equation}\label{welldefined}
\langle\xi|V_{a}\Gamma^{\phi}(a,b)\rangle=(T_{a}\varphi-\varphi)(E_{b}^{\perp}V_{a}^{*}\xi)
=\varphi(E_{a+b}^{\perp}\xi)-\varphi(E_{a}^{\perp}\xi)-\varphi(E_{b}^{\perp}V_{a}^{*}\xi)
\end{equation} for $a,b\in P$. 
 For $a\in P$, let $\xi_{a}\in Ker(V_{a}^{*})$ be such that 
 \begin{equation*}
 \varphi(\xi)=\langle\xi|\xi_{a}\rangle
 \end{equation*} for each $\xi \in Ker (V_{a}^{*})$. Then, Eq. \ref{welldefined} implies that
 \begin{align*}
 \langle\xi|V_{a}\Gamma^{\phi}(a,b)\rangle=&\langle\xi|\xi_{a+b}-\xi_{a}-V_{a}\xi_{b}\rangle
\end{align*} for each $\xi\in \mathcal{D}_V$ and for $a,b\in P$. Since $\mathcal{D}_V$ is dense in $\mathcal{H}$, we have \begin{equation}
\label{cobound}V_{a}\Gamma^{\phi}(a,b)=\xi_{a+b}-\xi_{a}-V_{a}\xi_{b}\end{equation} for each $a,b\in P$. 

Note that Eq. \ref{cobound} implies that for $a \leq b$, $\xi_a=E_{a}^{\perp}\xi_b$, i.e. $\xi$ is coherent. By Remark \ref{continuity of coherence}, it follows that $\xi$ is Borel.   Now Eq. \ref{cobound} implies that  $\Gamma^{\phi}$ is a coboundary. This shows that the map $\theta$ is well-defined.

 Suppose $\phi\in H^{1}(P,\mathcal{L}_{V})$ is such that $\Gamma^{\phi}$ is a coboundary. Then, there exists a Borel map $P\ni a\to \xi_{a}\in\mathcal{H}$ such that $\xi_a \in Ker(V_a^*)$ for $a \in P$ and for $a,b \in P$ 
 \begin{equation}\label{injectivity}
V_{a}\Gamma^{\phi}(a,b)=\xi_{a+b}-\xi_{a}-V_{a}\xi_{b}.
 \end{equation}
Define $\varphi:D_{V}\to \mathbb{C}$ by
\begin{equation*}
\varphi(\xi):=\begin{cases}\langle\xi|\xi_{a}\rangle & \textrm{if $\xi \in Ker( V_{a}^{*})$}.
\end{cases}
\end{equation*}
Eq. \ref{injectivity} implies that $E_{b}^{\perp}\xi_{b+c}=\xi_b$. Using this, it is not difficult to check that  that $\varphi$ is well defined. By the definition of $\varphi$, it is clear that $\varphi$ is bounded on $Ker(V_{b}^{*})$ for each $b\in P$.

 Thanks to  Eq. \ref{injectivity}, for $\xi\in Ker (V_{b}^{*})$ and $a\in P$,
 \begin{align*}
 \phi_{a}(\xi)&=\langle \xi|\Gamma^{\phi}(a,b)\rangle \\
 &=\langle \xi|V_{a}^{*}\xi_{a+b}-\xi_b \rangle\\
 &=\langle V_{a}\xi|\xi_{a+b}\rangle -\langle \xi|\xi_b \rangle\\
 &=\varphi(V_{a}\xi)-\varphi(\xi)\\
 &=(T_{a}\varphi-\varphi)(\xi).
 \end{align*}
 Hence, $\phi$ is a coboundary  whenever $\Gamma^{\phi}$ is a coboundary. In other words, $\theta$ is injective.
  The proof is complete.
 \end{proof}
 \begin{lemma}  Let $\Gamma$ be a $2$-cocycle. Then, $\Gamma$ is norm continuous.
 \end{lemma}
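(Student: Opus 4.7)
\emph{Proof proposal.} The plan is to transfer the question, via Proposition \ref{equiv}, to the associated $1$-cocycle and then exploit the continuity of $1$-cocycles recorded in Remark \ref{continuity of cocyclesblah}.

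First I would write $\Gamma=\Gamma^{\phi}$ for the unique $\phi\in Z^{1}(P,\mathcal{L}_V)$ furnished by Proposition \ref{equiv}. By Remark \ref{continuity of cocyclesblah}, the map $P\ni a\mapsto \phi_a\in \mathcal{L}_V$ is continuous in the Frechet topology $\tau_V$, so $\|\phi_{a_n}-\phi_a\|_c\to 0$ for every $c\in P$ whenever $a_n\to a$.

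Next, given $(a,b)\in P\times P$ and a sequence $(a_n,b_n)\to(a,b)$, I would pick any $c_0\in\Omega$ and set $b_0:=b+c_0\in\Omega$. Since $b_0-b_n=c_0+(b-b_n)$ lies in the open set $\Omega$ for all large $n$, one has $b_n\leq b_0$ eventually, and clearly $b\leq b_0$. Recall from the proof of Proposition \ref{equiv} that the cocycle identity, applied with $c=b_0-b$, gives
\[
\Gamma(a,b)=E_b^{\perp}\,\Gamma(a,b_0)\quad\text{whenever } b\leq b_0.
\]
I would then split
\[
\Gamma(a_n,b_n)-\Gamma(a,b)=E_{b_n}^{\perp}\bigl[\Gamma(a_n,b_0)-\Gamma(a,b_0)\bigr]+\bigl(E_{b_n}^{\perp}-E_b^{\perp}\bigr)\Gamma(a,b_0)
\]
and estimate the two pieces separately. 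For the first piece, the vectors $\Gamma(a_n,b_0)$ and $\Gamma(a,b_0)$ are the Riesz representatives in $Ker(V_{b_0}^{*})$ of the restrictions $\phi_{a_n}|_{Ker(V_{b_0}^{*})}$ and $\phi_a|_{Ker(V_{b_0}^{*})}$, so $\|\Gamma(a_n,b_0)-\Gamma(a,b_0)\|=\|\phi_{a_n}-\phi_a\|_{b_0}$, which tends to $0$ by continuity of $\phi$. For the second piece, I would invoke strong continuity of the adjoint semigroup $\{V_a^{*}\}_{a\in P}$; then $b\mapsto E_b=V_bV_b^{*}$ is strongly continuous and the term vanishes. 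The triangle inequality then yields joint norm continuity of $\Gamma$ at $(a,b)$.

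The hard part will be justifying strong continuity of $\{V_a^{*}\}_{a\in P}$, which is the only substantial input beyond what is already on the page. The one-parameter fact is standard (the adjoint of a $C_0$-semigroup on a reflexive space is $C_0$); the multiparameter statement can be bootstrapped from it using continuity at the origin, which follows directly from $\|V_c^{*}\xi-\xi\|^{2}\leq 2\|\xi\|^{2}-2\Re\langle\xi,V_c\xi\rangle\to 0$, together with the semigroup factorisation $V_{a+\varepsilon c_0}=V_a V_{\varepsilon c_0}$ on the cofinal family of translates $a+\varepsilon c_0$ for $c_0\in\Omega$.
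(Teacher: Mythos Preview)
Your proposal is correct and follows essentially the same route as the paper: pass to the associated $1$-cocycle $\phi$, use its continuity to get $a\mapsto\Gamma(a,b_0)$ continuous for fixed $b_0$, invoke the coherence relation $\Gamma(a,b)=E_b^{\perp}\Gamma(a,b_0)$ for $b\leq b_0$, and finish with the strong continuity of $b\mapsto E_b^{\perp}$. The paper phrases the last step via an open cover $F_n=\{(a,b):a,b<na_0\}$ rather than a sequence-by-sequence choice of $b_0$, and it simply asserts the strong continuity of $E_b^{\perp}$ without the justification you supply, but these are cosmetic differences.
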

 \begin{proof}
Let $\phi \in Z^1(P,\mathcal{L}_V)$ be such that $\Gamma=\Gamma^{\phi}$. It follows from  Remark \ref{continuity of cocyclesblah} that the map $P \ni a \to \phi_a \in \mathcal{L}_V$ is continuous. By the definition of $\Gamma^{\phi}$
and the continuity of $\phi$, it follows that for a fixed $b \in P$, the map 
\[
P \ni a \to \Gamma(a,b) \in Ker(V_b^*) \subset \mathcal{H}
\]
is norm continuous. 

 For $a\in P$,   observe from the cocycle identity that the set $\{\Gamma(a,b)\}_{b\in P}$ is coherent, i.e. if $b_{1}, b_{2}\in P$ are such that $b_{1}\leq b_{2}$, then 
 \begin{equation*}
 \Gamma(a,b_{1})=E_{b_{1}}^{\perp}\Gamma(a,b_{2}).
 \end{equation*} 
Let $a_0 \in \Omega$ be given. Let $F_n:=\{(a,b) \in P \times P: a,b <na_0\}$. Then, $\{F_n\}_{n \geq 1}$ is an open cover of $P\times P$. Observe that for $(a,b) \in F_n$, $\Gamma(a,b)=E_{b}^{\perp}\Gamma(a,na_0)$. The proof follows from the strong continuity of the map $P \ni b \to E_{b}^{\perp} \in B(\mathcal{H})$ and the continuity of the map $P \ni a \to \Gamma(a,na_0) \in \mathcal{H}$.
 \end{proof}
 Note that we can define the spaces $Z^{1}(\Omega, \mathcal{L}_V)$, $B^1(\Omega,\mathcal{L}_V)$ and $H^1(\Omega,\mathcal{L}_V)$ in an analogous 
 manner. Similarly, we can define the spaces $Z^{2}(\Omega,\mathcal{H})$, $B^{2}(\Omega,\mathcal{H})$ and $H^{2}(\Omega, \mathcal{H})$ and we have the isomorphism 
 $H^{1}(\Omega,\mathcal{L}_V)\cong H^{2}(\Omega,\mathcal{H})$.
 
Let $r:Z^{1}(P,\mathcal{L}_V) \to Z^{1}(\Omega, \mathcal{L}_V)$ be the restriction map. 

\begin{prop}\label{extension}
The restriction map $r:Z^{1}(P,\mathcal{L}_V) \to Z^{1}(\Omega,\mathcal{L}_V)$ is an isomorphism. Also, it
descends to establish an isomorphism between $H^1(P,\mathcal{L}_V)$ and $H^{1}(\Omega,\mathcal{L}_V)$.
\end{prop}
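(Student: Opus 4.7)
The plan is to verify that $r$ is a bijection between the cocycle spaces, and that both $r$ and its inverse preserve coboundaries; the cohomology isomorphism then follows immediately. Injectivity of $r$ is a one-line consequence of the cocycle identity: if $\phi \in Z^{1}(P, \mathcal{L}_V)$ vanishes on $\Omega$, then for any $a \in P$ and any $b \in \Omega$ (so $a+b \in \Omega$), the relation $\phi_{a+b} = \phi_a + T_a \phi_b$ collapses to $\phi_a = 0$.

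For surjectivity, given $\psi \in Z^{1}(\Omega, \mathcal{L}_V)$, I would fix $b_0 \in \Omega$ and set $\phi_a := \psi_{a+b_0} - T_a \psi_{b_0}$ for $a \in P$. The membership $\phi_a \in \mathcal{L}_V$ is immediate from the fact that $V_a$ carries $Ker(V_c^*)$ into $Ker(V_{a+c}^*)$; measurability of $a \mapsto \phi_a(\xi)$ follows from the continuity of $\psi$ on $\Omega$ (Remark \ref{continuity of cocyclesblah}) and the strong continuity of $V$; and $\phi|_\Omega = \psi$ is a direct application of the $\psi$-cocycle identity to $a+b_0$. The two substantive checks are independence of $b_0$ and the cocycle identity on $P$, and I expect the bookkeeping around these to be the main obstacle.

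For independence of $b_0$, I would compare two choices $b_0, c_0 \in \Omega$ via a common interior upper bound (for instance $d_0 = b_0 + 2 c_0$, for which $d_0 - b_0, d_0 - c_0 \in \Omega$) and apply the $\psi$-cocycle identity twice. For the cocycle identity $\phi_{a+b} = \phi_a + T_a \phi_b$ on $P$, the clean maneuver, exploiting independence, is to compute using $c_0 \in \Omega$ as the interior point for $\phi_b$ and $\phi_{a+b}$ but $b + c_0$ (which lies in $\Omega$) for $\phi_a$. The cross terms then cancel:
\[
\phi_a + T_a \phi_b = \bigl(\psi_{a+b+c_0} - T_a \psi_{b+c_0}\bigr) + \bigl(T_a \psi_{b+c_0} - T_{a+b}\psi_{c_0}\bigr) = \psi_{a+b+c_0} - T_{a+b}\psi_{c_0} = \phi_{a+b}.
\]

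For the cohomology statement, $r$ manifestly maps $B^{1}(P, \mathcal{L}_V)$ into $B^{1}(\Omega, \mathcal{L}_V)$. Conversely, if $\phi \in Z^{1}(P, \mathcal{L}_V)$ restricts on $\Omega$ to $T_a \varphi - \varphi$ for some $\varphi \in \mathcal{L}_V$, then for any $a \in P$ and any $b \in \Omega$ the cocycle identity $\phi_{a+b} = \phi_a + T_a \phi_b$ together with the coboundary formulas for $\phi_{a+b}$ and $\phi_b$ (both having arguments in $\Omega$) forces $\phi_a = T_a \varphi - \varphi$. Combined with the bijectivity of $r$, this gives the desired isomorphism $H^{1}(P, \mathcal{L}_V) \cong H^{1}(\Omega, \mathcal{L}_V)$.
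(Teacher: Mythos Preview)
Your proof is correct and uses exactly the same extension formula $\phi_a=\psi_{a+b_0}-T_a\psi_{b_0}$ as the paper. The only difference is in the verification steps: the paper observes that $\phi$ is continuous on $P$ (via Remark \ref{continuity of cocyclesblah} and strong continuity of $V$) and then transports the cocycle identity and the coboundary formula from the dense set $\Omega$ to all of $P$ by continuity, whereas you verify both facts by direct algebraic manipulation of the cocycle identity (exploiting independence of the base point, and using $P+\Omega\subset\Omega$). Your route is a bit more hands-on but has the mild advantage of not needing continuity of cocycles for the cocycle-identity and coboundary checks; the paper's route is shorter once continuity is in hand. Either way the argument is the same in spirit.
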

\begin{proof}
The continuity of $1$-cocycles and the density of $\Omega$ in $P$ imply that $r$ is $1$-$1$. Next we show that $r$ is surjective.  Suppose $\delta\in Z^{1}(\Omega,\mathcal{L}_{V})$.  Fix $a_{0}\in \Omega$. 
   Define $\phi:P\to\mathcal{L}_{V}$ by
 \begin{equation*}
 \phi_{a}=\delta_{a+a_0}-T_{a}\delta_{a_0}.
 \end{equation*} 
 It is clear that $\phi_a=\delta_a$ for $a \in \Omega$. Also, the map 
 $P \ni a \to \phi_a \in \mathcal{L}_V$ is continuous. 
 Since $\phi_{a+b}=\phi_a+T_a\phi_b$ for $a,b \in \Omega$ and $\Omega$ is dense in $P$, it follows that $\phi_{a+b}=\phi_a+T_{a}\phi_b$ for $a,b \in P$.
 Therefore, $\phi \in Z^1(P,\mathcal{L}_V)$ and $r(\phi)=\delta$.  Hence, $r:Z^1(P,\mathcal{L}_V) \to Z^1(\Omega, \mathcal{L}_V)$ is surjective. Consequently, $r:Z^1(P,\mathcal{L}_V) \to Z^1(\Omega,\mathcal{L}_V)$ is an isomorphism.

It is clear that $r$ descends to a linear map from $H^1(P,\mathcal{L}_V)$ to $H^1(\Omega,\mathcal{L}_V)$ which we again denote by $r$. We have already show that $r$ is surjective. We claim that $r$ is injective.  Suppose $r(\phi) \in B^1(\Omega,\mathcal{L}_V)$ for some $\phi \in Z^1(P,\mathcal{L}_V)$. Then, there exists  $\varphi\in \mathcal{L}_{V}$  such that
\begin{equation*}
\phi_{a}=T_{a}\varphi-\varphi
\end{equation*} for $a\in\Omega$.
Consider the coboundary $\psi \in Z^{1}(P,\mathcal{L}_{V})$ defined by
\begin{equation*}
\psi_{a}=T_{a}\varphi-\varphi
\end{equation*} for $a\in P$.
Then, $\phi=\psi$ on $\Omega$. Since $1$-cocycles are continuous and $\Omega$ is dense in $P$, it follows that $\phi_{a}=\psi_{a}$ for $a \in P$. This proves the injectivity of $r$.
  \end{proof}
The next corollary is an immediate consequence of the isomorphism between the space of $1$-cocycles and the space of $2$-cocycles.
\begin{corollary}
\label{extension third section}
The restriction map $Res:Z^{2}(P,\mathcal{H}) \to Z^{2}(\Omega,\mathcal{H})$ is an isomorphism. Moreover, $Res$
descends to establish an isomorphism between $H^2(P,\mathcal{H})$ and $H^2(\Omega,\mathcal{H})$.
\end{corollary}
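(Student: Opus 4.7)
The plan is to deduce this corollary from the two immediately preceding results by a straightforward diagram chase. Proposition \ref{equiv} supplies an isomorphism $\theta_P\colon H^1(P,\mathcal{L}_V)\to H^2(P,\mathcal{H})$ induced by the cocycle-level bijection $\phi\mapsto\Gamma^{\phi}$, where $\langle\xi|\Gamma^{\phi}(a,b)\rangle=\phi_{a}(\xi)$ for $\xi\in Ker(V_{b}^{*})$; moreover, this bijection exchanges the coboundary subspaces. The same proof, carried out with $P$ replaced by $\Omega$ throughout, yields an analogous isomorphism $\theta_\Omega\colon H^1(\Omega,\mathcal{L}_V)\to H^2(\Omega,\mathcal{H})$ together with its cocycle-level bijection preserving coboundaries.

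The crucial compatibility is that $\phi\mapsto\Gamma^{\phi}$ commutes with restriction: for any $\phi\in Z^1(P,\mathcal{L}_V)$, one has $\Gamma^{r(\phi)}=(\Gamma^{\phi})|_{\Omega\times\Omega}$ directly from the definition of $\Gamma^{\phi}$. This yields a commutative square whose horizontal arrows are the cocycle-level bijections of Proposition \ref{equiv} and whose vertical arrows are the restriction maps $r$ and $Res$. Proposition \ref{extension} asserts that $r$ is an isomorphism, so the commutative square forces $Res\colon Z^2(P,\mathcal{H})\to Z^2(\Omega,\mathcal{H})$ to be an isomorphism as well.

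For the cohomological statement, I would check that $Res$ matches the coboundary subspaces. One direction is immediate: any witness $\xi\colon P\to\mathcal{H}$ for $\Gamma\in B^2(P,\mathcal{H})$ restricts to a witness $\xi|_\Omega$ for $Res(\Gamma)\in B^2(\Omega,\mathcal{H})$. For the converse, suppose $Res(\Gamma)\in B^2(\Omega,\mathcal{H})$. Pulling back via the bijection of Proposition \ref{equiv}, the unique $\phi\in Z^1(P,\mathcal{L}_V)$ with $\Gamma^{\phi}=\Gamma$ satisfies $r(\phi)\in B^1(\Omega,\mathcal{L}_V)$, by the coboundary half of Proposition \ref{equiv} applied over $\Omega$. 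The cohomology-level isomorphism of Proposition \ref{extension} then gives $\phi\in B^1(P,\mathcal{L}_V)$, whence $\Gamma=\Gamma^{\phi}\in B^2(P,\mathcal{H})$. Surjectivity on $H^2$ follows at once from surjectivity at the cocycle level.

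The argument introduces no new analytic input; all continuity and extension questions have already been absorbed into the proofs of Propositions \ref{equiv} and \ref{extension}. The only point one might worry about --- upgrading a coboundary witness $\xi\colon\Omega\to\mathcal{H}$ to a witness defined on all of $P$ --- is handled silently at the $1$-cocycle level, since Proposition \ref{extension} already shows that any $\phi\in Z^1(P,\mathcal{L}_V)$ which is a coboundary on $\Omega$ must be a coboundary on $P$.
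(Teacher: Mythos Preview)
Your proof is correct and takes essentially the same approach as the paper, which simply states that the corollary is an immediate consequence of the isomorphism between the space of $1$-cocycles and the space of $2$-cocycles. You have spelled out the diagram chase in more detail than the paper does, but the underlying argument---transporting Proposition \ref{extension} through the bijection of Proposition \ref{equiv}---is identical.
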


\begin{definition}
Let $\Gamma \in Z^2(P,\mathcal{H})$. We say that $\Gamma$ is admissible if there exists a Borel map $\alpha:P\times P\to\mathbb{T}$ such that 
 \begin{equation}\label{admissibility}
 e^{iIm\langle \Gamma(a,b+c)|V_{b}\Gamma(b,c)\rangle}=\frac{\alpha(a,b)\alpha(a+b,c)}{\alpha(a,b+c)\alpha(b,c)}
 \end{equation} for each $a,b,c\in P$.
 A $1$-cocycle $\phi \in Z^1(P,\mathcal{L}_V)$ is said to be admissible if $\Gamma^\phi$ is admissible. 
\end{definition}

For an admissible 2-cocycle $\Gamma$, the map $\alpha:P\times P\to\mathbb{T}$ satisfying Eq. \ref{admissibility} is not unique. If $\alpha_{1}$ and $\alpha_{2}$ are two such maps, then there exists a multiplier $\omega$ such that $\alpha_{1}=\omega\alpha_{2}$.

We denote the set of admissible cocycles in $Z^2(P,\mathcal{H})$ by $Z_a^2(P,\mathcal{H})$, and the set of admissible $1$-cocycles in $Z^1(P,\mathcal{L}_V)$ will be denoted by $Z_a^1(P,\mathcal{L}_V)$. The image of $Z_{a}^{2}(P,\mathcal{H})$ in $H^{2}(P,\mathcal{H})$ and the image of $Z_{a}^{1}(P,\mathcal{L}_V)$ in $H^1(P,\mathcal{L}_V)$ will be denoted by $H_{a}^{2}(P,\mathcal{H})$ and $H^1_a(P,\mathcal{L}_V)$ respectively. The image of $Z^{2}_{a}(P,\mathcal{H})$ in $Z^{2}(\Omega,\mathcal{H})$ under the natural restriction map will be denoted $Z_{a}^{2}(\Omega,\mathcal{H})$. The notation $H^1_{a}(\Omega,\mathcal{L}_V)$ and $H^2_{a}(\Omega,\mathcal{L}_V)$ stand
for similar things.

Let $M_V:=\{V_x,V_{x}^{*}:x \in P\}^{'}$ and let $\mathcal{U}(M_V)$ be the unitary group of the von Neumann algebra $M_V$. The group $\mathcal{U}(M_V)$ acts naturally on the cohomology groups $H^1(P,\mathcal{L}_V)$, $H^{2}(P,\mathcal{H})$ and leaves
$H_{a}^{1}(P,\mathcal{L}_V)$ and $H_{a}^{2}(P,\mathcal{H})$ invariant.

\section{Structure of a decomposable product system}

We show in this section that a generic decomposable product system, up to a projective isomorphism, is determined
by an isometric representation and a certain $2$-cocycle. We also obtain necessary and sufficient conditions for two such product systems to be projectively isomorphic. 
 
Let $V=\{V_{a}\}_{a\in P}$ be a pure isometric representation  on a Hilbert space $\mathcal{H}$. Consider the product system $E^{V}$ associated to the corresponding CCR flow $\alpha^{V}$ acting on $\Gamma_{s}(\mathcal{H})$. 
 Recall that $E^{V}=\{E(a)\}_{a\in P}$, where $E^{V}(a)=E(a)=\Gamma_{s}(Ker(V_{a}^{*}))$ for each $a\in P$, and the multiplication rule on the exponential vectors is given by
\begin{equation*}
e(\xi)\odot e(\eta)=e(\xi+V_{a}\eta)
\end{equation*}for  $a,b\in P$, $\xi\in Ker(V_{a}^{*})$ and $\eta\in Ker(V_{b}^{*})$.
The symbol $\odot$ is reserved for the product rule of the CCR flow for the rest of this paper. By Prop. 3.2 of \cite{SUNDAR}, $E^{V}$ is decomposable, and for $a\in P$, the set of decomposable vectors in $E^{V}(a)$, denoted $D^V(a)$, is given by
\begin{equation}\label{ccrdecomposable}
D^{V}(a):= \{\lambda e(\xi):\textit{ $\xi\in Ker(V_{a}^{*})$, $\lambda\in\mathbb{C}\setminus \{0\}$}\}.      
\end{equation}

Suppose $\Gamma:P\times P \to \mathcal{H}$ is an admissible 2-cocycle. Let $\alpha:P\times P\to\mathbb{T}$ be a map that satisfies Eq. \ref{admissibility}.  Define a new product rule on $E^V$ as follows. The product rule on the exponential vectors is given by 
\begin{equation}\label{multiplication}
 e(\xi). e(\eta)=  \alpha(a,b)e(\xi)\odot W(\Gamma(a,b))e(\eta)
\end{equation} for $\xi \in Ker(V_a^*)$ and $\eta \in Ker(V_b^*)$.  The resulting product system will be denoted by $E^{(\alpha,\Gamma,V)}$ to indicate its dependence on the triple $(\alpha,\Gamma,V)$. The measurable structure on $E^{(\alpha,\Gamma,V)}$ is the same as that of $E^V$. 

The proof of the following proposition is elementary and a proof for a special case
can be found in \cite{Poisson} (Prop. 3.2 of \cite{Poisson}). We repeat the proof for 
completeness.  Keep the foregoing notation.
 \begin{prop}\label{->}
 The product system $E^{(\alpha,\Gamma,V)}$ is decomposable.
 \end{prop}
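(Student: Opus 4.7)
The plan is to identify explicitly the decomposable vectors in $E^{(\alpha,\Gamma,V)}(a)$ and then to verify the two decomposability axioms directly. Since the new multiplication differs from the CCR product $\odot$ only by a scalar in $\mathbb{T}$ and by a Weyl operator $W(\Gamma(a,b))$, and since Weyl operators send exponential vectors to nonzero scalar multiples of exponential vectors, the natural candidates for decomposable vectors in $E^{(\alpha,\Gamma,V)}(a)$ are the nonzero scalar multiples of $e(\xi)$ with $\xi\in Ker(V_a^*)$, exactly as in the untwisted CCR case treated in Prop.~3.2 of \cite{SUNDAR}.

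The central computation is the following factorization. Fix $c\leq a$ and set $d=a-c$. Given $\xi\in Ker(V_a^*)$, use the orthogonal decomposition $Ker(V_a^*)=Ker(V_c^*)\oplus V_c\,Ker(V_d^*)$ to write $\xi=\eta_1+V_c\eta_2$ with $\eta_1\in Ker(V_c^*)$ and $\eta_2\in Ker(V_d^*)$. Because $\Gamma(c,d)\in Ker(V_d^*)$, the shifted vector $\eta_2':=\eta_2-\Gamma(c,d)$ again lies in $Ker(V_d^*)$. Applying the Weyl identity $W(\Gamma(c,d))e(\eta_2')=\mu\, e(\eta_2)$ for some nonzero scalar $\mu$, followed by the CCR product rule, gives
\[
e(\eta_1)\cdot e(\eta_2')=\alpha(c,d)\,e(\eta_1)\odot W(\Gamma(c,d))e(\eta_2')=\alpha(c,d)\mu\, e(\eta_1+V_c\eta_2)=\alpha(c,d)\mu\, e(\xi).
\]
Thus $e(\xi)$ is, up to a nonzero scalar, a product through the chosen decomposition $a=c+d$, and since the scalar is nonzero this shows that every nonzero scalar multiple of $e(\xi)$ is decomposable in $E^{(\alpha,\Gamma,V)}(a)$.

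With the decomposable vectors identified as nonzero scalar multiples of exponential vectors, the remaining requirements of decomposability are immediate. Totality of $D(a)$ in $E^{(\alpha,\Gamma,V)}(a)=\Gamma_s(Ker(V_a^*))$ is the standard totality of exponential vectors in symmetric Fock space. The inclusion $D(a)\cdot D(b)\subseteq D(a+b)$ follows from a direct Weyl calculation showing that $(\mu e(\xi))\cdot(\nu e(\eta))$ is a nonzero scalar multiple of $e(\xi+V_a\eta+V_a\Gamma(a,b))$, hence again a decomposable vector. The reverse inclusion $D(a+b)\subseteq D(a)\cdot D(b)$ is exactly the content of the factorization carried out above. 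The only real subtlety is the use of the defining property $\Gamma(c,d)\in Ker(V_d^*)$ to absorb the $\Gamma$-twist back into an exponential vector; neither the full cocycle identity for $\Gamma$ nor the admissibility condition on $\alpha$ is needed for decomposability itself, as those enter only in the verification of associativity of the product, which is presumed here. Everything else is routine bookkeeping with the Weyl CCR relations.
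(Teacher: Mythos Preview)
Your approach is essentially the paper's: identify the decomposable vectors of the twisted system with those of the CCR system $E^V$, namely the nonzero scalar multiples of exponential vectors, and then read off the two axioms from the known description of $D^V(a)$. The factorization computation you give is exactly the content of the paper's forward implication $D^V(b)\subseteq D(b)$, written out in coordinates.

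There is, however, a genuine gap. You only prove one inclusion, $\{\lambda e(\xi):\lambda\neq 0,\ \xi\in Ker(V_a^*)\}\subseteq D(a)$, and then assert that the decomposable vectors have been ``identified'' as this set. But verifying $D(a)D(b)=D(a+b)$ requires the reverse inclusion as well: your Weyl calculation shows $(\mu e(\xi))\cdot(\nu e(\eta))$ is again a scalar multiple of an exponential, but this only gives $D(a)D(b)\subseteq D(a+b)$ once you know every element of $D(a)$ is of the form $\mu e(\xi)$. Likewise, your factorization argument gives $\{\lambda e(\xi)\}\subseteq D(a)\cdot D(b)$ for $\xi\in Ker(V_{a+b}^*)$, but you need $D(a+b)\subseteq D(a)\cdot D(b)$. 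In a higher-dimensional cone the order is not total, so one cannot argue that $D(a)D(b)\subseteq D(a+b)$ is automatic from the definition of decomposable vector.

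The fix is short and is exactly what the paper does: given $u\in D(b)$ in the twisted system, for each $a\leq b$ write $u=u_a\cdot w$ with $w\in E(b-a)$; then $u=u_a\odot\big(\alpha(a,b-a)W(\Gamma(a,b-a))w\big)$ is a $\odot$-factorization, so $u\in D^V(b)$, hence $u$ is a scalar multiple of an exponential. With both inclusions in hand, your remaining steps go through verbatim.
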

   \begin{proof}
  We will denote $E^{(\alpha,\Gamma,V)}$ by $E$ for simplicity.   It is routine to verify that the space $E$ with the above  multiplication rule is a well defined product system.  We now prove the decomposability of $E$.  
  
  For $b\in P$, let $D(b)$ and $D^{V}(b)$ be the  set of decomposable vectors in $E(b)$ and $E^{V}(b)$ respectively.  Let $u\in E^{V}(b)$. We claim that $u\in D^{V}(b)$ if and only if $u\in D(b)$. Suppose $u\in D^{V}(b)$. Let $a \in P$ be such that $a\leq b$. Let $u_{a}\in E^{V}(a)$  and $u_{a,b}\in E^{V}(b-a)$ be such that $u=u_{a}\odot u_{a,b}$. Note that $W(\Gamma(a,b))u_{a,b}\in E(b-a)$. Clearly,
\begin{equation*} u=u_{a}.\frac{1}{\alpha(a,b-a)}W(-\Gamma(a,b))u_{a,b}.
\end{equation*}
Therefore, $u$ is decomposable in $E$. It may similarly be proved that if $u\in D(b)$, then $u \in D^V(b)$. Hence the claim is proved. 

Since $E^{V}$ is decomposable, $D(a)$ is total in $E(a)$ for each $a\in P$. It follows from Eq. \ref{ccrdecomposable} and Eq. \ref{multiplication}  that $D(a)D(b)=D(a+b)$ for $a,b\in P$.
 This completes the proof.
  \end{proof}
  
  \begin{prop}
  \label{admissibility of cohomologous cocycles}
  Let $\Gamma_{1},\Gamma_{2}\in Z^{2}(P,\mathcal{H})$ be given. Suppose $\Gamma_{1}$ and $\Gamma_{2}$ are cohomologous. Then, $\Gamma_{1}$ is admissible if and only if $\Gamma_{2}$ is admissible.
  \end{prop}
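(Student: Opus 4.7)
The plan is as follows. By the symmetry of the statement it suffices to show that if $\Gamma_1$ is admissible then $\Gamma_2$ is admissible. Fix a measurable $\alpha_1:P\times P\to\mathbb{T}$ witnessing admissibility of $\Gamma_1$, i.e., $\alpha_1$ satisfies Eq.~\ref{admissibility} with $\Gamma=\Gamma_1$. Since $\Gamma_1$ and $\Gamma_2$ are cohomologous, $\Delta:=\Gamma_2-\Gamma_1\in B^{2}(P,\mathcal{H})$, so there exists a measurable $\xi:P\to\mathcal{H}$ with $\xi_a\in Ker(V_a^*)$ and $V_a\Delta(a,b)=\xi_{a+b}-\xi_a-V_a\xi_b$. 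Applying $V_a^*$ and using $V_a^*V_a=I$ together with $V_a^*\xi_a=0$ yields the explicit formula $\Delta(a,b)=V_a^*\xi_{a+b}-\xi_b$. As observed in the proof of Prop.~\ref{equiv}, the coboundary equation also forces coherence of $\xi$: namely $\xi_a=E_a^{\perp}\xi_b$ whenever $a\leq b$, where $E_a=V_aV_a^*$.

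Next, I would expand
\[R_{12}(a,b,c):=\mathrm{Im}\langle\Gamma_2(a,b+c)|V_b\Gamma_2(b,c)\rangle-\mathrm{Im}\langle\Gamma_1(a,b+c)|V_b\Gamma_1(b,c)\rangle\]
by bilinearity into three pieces: a pure $\Delta\Delta$ term and two mixed $\Gamma_1$--$\Delta$ cross terms. In each piece I would pull isometries across the inner product via $V_aV_b=V_{a+b}$, substitute the coboundary identity for $V_a\Delta(a,\cdot)$, and invoke the $\Gamma_1$ cocycle identity. Orthogonalities such as $V_b^*\xi_b=0$, $\Gamma_1(a,b)\in Ker(V_b^*)$, and $V_{a+b}^*V_a=V_b^*$, together with the coherence fact $\xi_{a+b+c}-\xi_{a+b}\in\mathrm{Range}(V_{a+b})$, produce a cascade of cancellations. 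A direct computation then yields
\[R_{12}(a,b,c)=F(a,b)+F(a+b,c)-F(a,b+c)-F(b,c),\]
where $F:P\times P\to\mathbb{R}$ is the measurable function
\[F(a,b):=\mathrm{Im}\langle\xi_{a+b}|V_a\Gamma_1(a,b)\rangle-\mathrm{Im}\langle\Gamma_1(a,b)|\xi_b\rangle-\mathrm{Im}\langle\xi_{a+b}|V_a\xi_b\rangle.\]

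Setting $\alpha_2(a,b):=\alpha_1(a,b)\,e^{iF(a,b)}$, the above identity immediately gives
\[\frac{\alpha_2(a,b)\alpha_2(a+b,c)}{\alpha_2(a,b+c)\alpha_2(b,c)}=e^{i\,\mathrm{Im}\langle\Gamma_2(a,b+c)|V_b\Gamma_2(b,c)\rangle},\]
so $\alpha_2$ witnesses the admissibility of $\Gamma_2$; measurability of $\alpha_2$ is inherited from that of $\xi$, $\Gamma_1$, and strong continuity of $V$. The main obstacle is the bookkeeping in the middle step: identifying the correct $F$ and verifying the three separate cancellations (one inside each of the three pieces of the expansion) is where essentially all the work sits. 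Once the candidate $F$ above is posed, the verification is a routine but careful computation using the $\Gamma_1$ cocycle identity, coherence of $\xi$, and the range/kernel orthogonalities listed above.
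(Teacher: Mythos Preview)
Your approach is correct but takes a different route from the paper's. You propose a direct algebraic expansion of the difference $R_{12}$ and identify an explicit real-valued $F$ whose coboundary equals $R_{12}$. The paper instead argues structurally via product systems: it forms $E^{(\alpha_1,\Gamma_1,V)}$, conjugates the multiplication fibrewise by the Weyl unitaries $\Psi_a=W(-\xi_a)$, and computes that the transported product has the shape $e(\xi)\circ e(\eta)=\alpha_2(a,b)\,e(\xi)\odot W(\Gamma_2(a,b))e(\eta)$ for an explicit $\alpha_2$; associativity of the transported product then \emph{forces} the admissibility identity for $\Gamma_2$ without any three-variable cancellation. The paper's $\alpha_2$ agrees with your $\alpha_1 e^{iF}$: their expression carries one extra term $-\mathrm{Im}\langle\xi_{a+b}|\xi_a\rangle$, but coherence gives $\langle\xi_{a+b}|\xi_a\rangle=\langle E_a^\perp\xi_{a+b}|E_a^\perp\xi_{a+b}\rangle=\|\xi_a\|^2$, which is real. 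Your argument is more elementary in that it never invokes Weyl operators or Fock space, but the price is the three-piece bookkeeping you flag as the main obstacle; the paper's associativity trick replaces that entire verification with a one-line structural observation.
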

  \begin{proof}
  Let $P\ni a\to\xi_{a}\in\mathcal{H}$ be a measurable map such that $\xi_{a}\in Ker(V_{a}^{*})$ for $a\in P$, and \[V_{a}(\Gamma_{2}-\Gamma_{1})(a,b)= \xi_{a+b}-\xi_{a}-V_{a}\xi_{b}\] for $a,b\in P$.
  
  Suppose $\Gamma_{1}$ is admissible. 
  Let $\alpha_{1}:P\times P\to\mathbb{T}$ be a Borel map such that  \[e^{iIm\langle\Gamma_{1}(a,b+c)|V_{b}\Gamma_{1}(b,c)\rangle}=\frac{\alpha_{1}(a,b)\alpha_{1}(a+b,c)}{\alpha_{1}(a,b+c)\alpha_{1}(b,c)}\]for $a,b,c\in P$.

 Consider the product system $E^{(\alpha_{1},\Gamma_{1},V)}$.
  For every $a\in P$, define a unitary operator
  $\Psi_{a}:E^{(\alpha_{1},\Gamma_{1},V)}(a)\to E^{(\alpha_{1},\Gamma_{1},V)}(a)$  by \[\Psi_{a}(u)=W(-\xi_{a})u\] for $u\in\Gamma_{s}(Ker(V_{a}^{*}))$.

  Let $F=\displaystyle \coprod_{a\in P}F(a)$ where $F(a)=\Gamma_{s}(Ker(V_{a}^{*}))$ for $a\in P$.  Define a product on $F$ by
  \[u\circ v=\Psi_{a+b}^{-1}(\Psi_{a}(u)*\Psi_{b}(v))\] for $a,b\in P$, $u\in F(a) $ and $v\in F(b)$, where $*$ is the product rule in $E^{(\alpha_1,\Gamma_1,V)}$, i.e.
\[u\circ v=\alpha_{1}(a,b)\ W(\xi_{a+b})(W(-\xi_{a})u\odot W(\Gamma_{1}(a,b))W(-\xi_{b})v)\] for $a,b\in P$, $u\in F(a) $ and $v\in F(b)$.  Here, $\odot$ is the product rule of the CCR flow $\alpha^{V}$. Clearly, $F$  with the above defined product is a product system.

Note that
\begin{equation}\label{alpha2}
e(\xi)\circ e(\eta)=\alpha_{2}(a,b)\ e(\xi)\odot W(\Gamma_{2}(a,b))e(\eta)\end{equation} for $a,b\in P$, $\xi\in Ker(V_{a}^{*})$ and $\eta\in Ker(V_{b}^{*})$.
 Here, \[\alpha_{2}(a,b)=\alpha_{1}(a,b)e^{-iIm(\langle\Gamma_{1}(a,b)|\xi_{b}\rangle+\langle\xi_{a+b}|-V_{a}\Gamma_1(a,b)+\xi_{a}+V_{a}\xi_{b}\rangle)}.\]  It is clear that $||\alpha_{2}(a,b)||=1$. Observe that $\alpha_2$ is measurable as well. By the associativity of the product $\circ$ and Eq. \ref{alpha2}, $\alpha_{2}$ must satisfy the identity 
\[e^{iIm\langle\Gamma_{2}(a,b+c)|V_{b}\Gamma_{2}(b,c)\rangle}=\frac{\alpha_{2}(a,b)\alpha_{2}(a+b,c)}{\alpha_{2}(a,b+c)\alpha_{2}(b,c)}\] for $a,b,c\in P$. 
Hence, $\Gamma_2$ is admissible. 
This completes the proof.
  \end{proof}
Let $E$ be a decomposable product system over $P$. For the rest of this section, the decomposable product system $E$ is fixed.  For $a \in \Omega$, let $D(a)$ be the set of decomposable vectors in $E(a)$.  Identify two vectors in $D(a)$ if they are scalar multiples of each other.  
Denote the set of equivalence classes by $\Delta(a)$. For $u \in D(a)$, the equivalence class in $\Delta(a)$ containing $u$ will be denoted $\dot{u}$.
 The disjoint union 
\begin{equation*}
\Delta:=\displaystyle{\coprod_{a\in \Omega}}\Delta(a)
\end{equation*}
is called the \emph{path space} of $E$ over $\Omega$. 

Define
\begin{equation*}
\Delta^{(2)}:=\{(a,\dot{u},\dot{v}):\textit{ $a\in \Omega$, $u,v\in D(a)$}\}.
\end{equation*}
By an abuse of notation, we will mostly denote the element $(a,\dot{u},\dot{v})$ of $\Delta^{(2)}$ by $(a,u,v)$. Similarly, an element of $\Delta$ will be denoted by $(a,u)$ for $u\in D(a)$ and $a\in\Omega$.
Arveson's notion of $e$-logarithm was extended for decomposable product systems over cones in \cite{SUNDAR}, and the following  is one of the main results of \cite{SUNDAR}. 

    Let $e:=(e_a)_{a \in P}$ be a left coherent section of decomposable vectors of unit norm. Then, there exists a unique map $L^{e}:\Delta^{(2)}\to\mathbb{C}$ with the following properties.  
\begin{enumerate}
\item For $a \in \Omega$ and $u,v \in D(a)$,
\begin{equation*}
e^{L^{e}(a,\dot{u},\dot{v})}=\frac{\langle u|v\rangle}{\langle u|e_{a}\rangle\langle e_{a}|v\rangle}.
\end{equation*}.
\item For two left coherent sections $(u_a)_{a \in P}$ and $(v_a)_{a \in P}$, the map 
\begin{equation*}
\Omega\ni a\to L^{e}(a,u_{a},v_{a})
\end{equation*} is continuous and $\displaystyle \lim_{a\to 0} L^{e}(a,u_{a},v_{a})=0$.
\item For each $a\in\Omega$, the map \[D(a)\times D(a)\ni (u,v)\to L^{e}(a,u,v)\] is positive definite.

\item For $a\in \Omega$, there exists a function $\psi_{a}:\Delta\to\mathbb{C}$ such that for $u_{1},v_{1}\in D(a)$ and $u_2,v_2 \in D(b)$, with $b \in \Omega$,
\begin{equation*}
L^{e}(a+b,u_{1}u_{2},v_{1}v_{2})=L^{e}(a,u_{1},v_{1})+L^{e}(b,u_{2},v_{2})+\psi_{a}(b,u_{2})+\overline{\psi_{a}(b,v_{2})}.
\end{equation*}
\end{enumerate}
The map $L^{e}$ is called the e-logarithm with respect to the left coherent section $e$.

\begin{remark}
Note that the $e$-logarithm, as of now, is only defined for the path space over $\Omega$, and it is not a priori clear how to extend this to the path space over $P$.
We will show later, after deriving the structure of a decomposable product system, that the $e$-logarithm can, in fact, be defined for the path space over $P$.
\end{remark}

\begin{prop}\label{existencemeasurablesection}
Let $b\in \Omega$, and let $u\in D(b)$ be of norm one. Then, there exists a   left coherent section $e=\{e_{a}\}_{a\in P}$ of decomposable vectors of norm one such that 
\begin{enumerate}[(1)]
\item the map $\Omega\ni a \to e_{a}\ni \displaystyle{\coprod_{a\in\Omega}}E(a)$ is measurable, and 
\item $e_{b}=u$.
\end{enumerate}
\end{prop}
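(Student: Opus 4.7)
The plan is to build the section in three stages, exploiting the cofinality of $\{nb\}_{n\ge 1}$ in $P$. First, I construct $e$ along the cofinal chain. Set $e_{b}:=u$, pick any unit vector $w\in D(b)$ (which exists because $D(b)$ is total in $E(b)\ne 0$), and define inductively $e_{nb}:=e_{(n-1)b}\cdot w$ for $n\ge 2$. The identity $D((n-1)b)\cdot D(b)=D(nb)$ combined with the unitarity of the product $E((n-1)b)\otimes E(b)\to E(nb)$ ensures that each $e_{nb}$ is a decomposable unit vector, and these are left coherent along the chain by construction.

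Next, I extend to all of $\Omega$. For $a\in\Omega$, let $n(a)$ be the least positive integer with $a\le n(a)b$; the map $a\mapsto n(a)$ is Borel. Since $e_{n(a)b}$ is decomposable, it factors as $e_{n(a)b}=\xi\cdot\eta$ with $\xi\in D(a)$ and $\eta\in D(n(a)b-a)$, uniquely up to the $\mathbb{C}^{\ast}$-action $(\xi,\eta)\mapsto(\lambda\xi,\lambda^{-1}\eta)$. Under the canonical unitary $E(n(a)b)\cong E(a)\otimes E(n(a)b-a)$ induced by the product, $e_{n(a)b}$ is a rank-one tensor, so $\xi$ can be recovered up to scalar by contracting against any non-zero vector in the second factor. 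Using the measurable structure of the field $\{E(a)\}_{a\in P}$, I choose a Borel section of reference vectors to perform this contraction and impose a Borel phase convention (for instance, by requiring the inner product of $e_a$ with a fixed Borel section $\zeta'_a\in E(a)$ to be real and positive), arranged so that on the chain $\{nb\}$ the reconstructed elements agree with those of Stage 1; in particular $e_b=u$. For $a\in P\setminus\Omega$, set $e_a$ to be any unit decomposable left factor of $e_{nb}$ for some $n$ with $a\le nb$, where no measurability is required.

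To verify left coherence, fix $a_1\le a_2$ in $P$ and pick $n$ with $a_2\le nb$. The vector $e_{nb}$ admits, up to scalar, unique decomposable left factors at positions $a_1$ and $a_2$; the phase convention pins down these scalars, so $e_{a_2}$ equals $e_{a_1}\cdot h$ for some (necessarily decomposable) $h\in E(a_2-a_1)$, as required. The principal technical hurdle is the second stage, where one must make a genuinely Borel and phase-consistent choice of factorisation uniformly in $a\in\Omega$. This is a measurable selection problem: the Borel multi-function $a\mapsto\{\xi\in D(a):\|\xi\|=1,\ \xi\ \text{is a left factor of } e_{n(a)b}\}$ admits a Borel selector by the Kuratowski--Ryll-Nardzewski theorem, and the remaining phase is fixed measurably by the reference section trick. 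Once this selection is in hand, verifying the remaining properties is routine.
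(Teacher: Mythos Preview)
Your strategy is genuinely different from the paper's. The paper starts from an arbitrary (possibly non-measurable) left coherent section $(f_a)_{a\in P}$ supplied by Prop.~3.3 of \cite{SUNDAR}, and then proves that the rank-one projection $P_a$ onto $\mathbb{C}f_a\subset E(a_n)$ is weakly measurable by expressing $\langle P_a u\,|\,v\rangle$ in terms of the $e$-logarithm $L^{f}(a,u_a,v_a)$ and invoking its continuity; the phase is then fixed by appealing to Arveson's Prop.~6.6.2. Your approach instead builds the section directly: fix a cofinal chain $\{nb\}$, and for general $a\in\Omega$ recover a left factor of the fixed vector $e_{n(a)b}$ by contracting against a measurable section of the complementary fibre $E(n(a)b-a)$. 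This is more elementary in that it does not require the $e$-logarithm machinery at all, only the measurability axiom of the product system; conversely, the paper's route is cleaner because the heavy lifting has already been done in constructing $L^e$, and it yields continuity of $a\mapsto P_a$ rather than mere measurability.

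There is, however, a genuine gap in your justification. You present Kuratowski--Ryll-Nardzewski as the formal reason the selection can be made Borel, but that theorem applies to multifunctions into a \emph{fixed} Polish space with verified weak measurability. Here the target fibres $E(a)$ vary with $a$, and checking weak measurability of the circle-valued multifunction $a\mapsto\{\lambda\xi_a:|\lambda|=1\}$ amounts to knowing that $a\mapsto P_a$ is weakly measurable---which is precisely the substantive point at issue, and is what the paper proves via $L^e$. The right justification is the contraction argument you also sketch: for each fixed $n$ and each measurable section $s$, the map $a\mapsto\langle r(a)\,s(nb-a)\,|\,e_{nb}\rangle$ is Borel for every measurable section $r$ by the product-system measurability axiom, so the contraction $v_a$ is a bona fide measurable section; then $a\mapsto\|v_a\|$ is Borel, one covers $\Omega$ by the Borel sets where the $k$-th contraction is nonzero, and normalises. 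This works, but it is the actual content of the proof and should replace the appeal to Kuratowski--Ryll-Nardzewski rather than be subsumed by it. Once this is done, your verification of left coherence (via uniqueness of left factors of a simple tensor up to scalar) and the single global phase adjustment to secure $e_b=u$ are correct.
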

\begin{proof}
By Prop. 3.3 of \cite{SUNDAR} there exists a left coherent section $(f_{a})_{a\in P}$ of decomposable vectors such that $f_{b}=u$. We may assume that $||f_{a}||=1$ for each $a\in P$. 

We claim that there exists a Borel map $\lambda:\Omega\to\mathbb{C}$ such that $|\lambda_{a}|=1$ for each $a\in \Omega$, and $(\lambda_{a}f_{a})_{a\in \Omega}$ is a measurable section.  Let $a_{0}\in \Omega$ be fixed. For $n\in\mathbb{N}$, let $a_{n}=na_{0}$. Define \begin{equation*}(0,a_{n}):=\{b\in \Omega:\textit{ $b<a_{n}$}\}.\end{equation*}
It suffices to  prove that for each $n\in\mathbb{N}$, there exists a measurable map $\lambda:(0,a_{n})\to\mathbb{C}$  such that  the map $(0,a_{n})\ni a\to\lambda_{a}f_{a} \in \displaystyle \coprod_{a \in P}E(a)$ is measurable.

Let $n \in \mathbb{N}$ be given. 
For $a\in (0,a_{n})$, let $L_{a}:E(a_{n}-a)\to E(a_{n})$ be the operator defined by \[L_a(\xi)= f_{a}\xi.\] Set $P_a=L_{a}L_{a}^{*}$. As in Prop. 6.6.2 of \cite{arveson}, the proof will follow once we show that $(0,a_n) \ni a\to P_{a}\in B(E(a_{n}))$ is measurable where $B(E(a_n))$ is given the $\sigma$-algebra generated by  the weak operator topology. We need to prove that for $u,v\in E(a_{n})$, the map $(0,a_n) \ni a\to \langle P_{a}u|v\rangle \in \bbc$ is measurable. 

For $a\in P$, let 
\begin{equation*}
D^{f}(a):=\{w\in D(a):\textit{ $\langle w| f_{a}\rangle=1$}\}. 
\end{equation*}
  Since $D^{f}(a_{n})$ is total in $E(a_{n})$, it is enough to prove that $(0,a_{n})\ni a\mapsto \langle P_{a}u|v\rangle\in\bbc$ is measurable for $u,v\in D^{f}(a_{n})$. Let $(u_{a})_{a\in P}$ and $(v_{a})_{a\in P}$ be left coherent sections of decomposable vectors such that $u_{a_{n}}=u$ and $v_{a_{n}}=v$. We may assume $u_{a},v_{a}\in D^{f}(a)$ for each $a\in P$.
Then, for $a \in (0,a_n)$,
\begin{equation*}
P_{a}(u)=\langle u_{a}|f_{a}\rangle f_{a}u(a,a_{n})=f_au(a,a_n).
\end{equation*}
This implies that
\[
\langle P_{a}u|v\rangle= \langle f_{a}|v_{a}\rangle\langle u(a,a_{n})|v(a,a_{n})\rangle=\frac{\langle u_{a_{n}}|v_{a_{n}}\rangle}{\langle u_{a}|v_{a}\rangle}=\frac{\langle u_{a_{n}}|v_{a_{n}}\rangle}{e^{L^{f}(a,u_a,v_a)}}.\]
 Since $(0,a_n) \ni a \to L^{f}(a,u_a,v_a) \in \bbc$ is continuous,  the map \[(0,a_{n})\ni a\to\langle P_{a}u|v\rangle \in \mathbb{C}\] is measurable. This completes the proof.
\end{proof}
\begin{remark}
\label{existencemeasurablesection1} With Prop. \ref{existencemeasurablesection} in hand, it is not difficult to show that there  exists a sequence $\{e_{n}\}_{n\in\mathbb{N}}$ of left coherent sections of decomposable vectors in $E$ such that
\begin{enumerate}[(1)]
\item for $n\in\mathbb{N}$, the section $e_{n}$ is measurable on $\Omega$, and
\item for each $a\in P$, the set $\{(e_{n})_{a}:\textit{ $n\in\mathbb{N}$}\}$ is total in $E(a)$.
\end{enumerate} 
This can be proved by proceeding exactly as in Lemma 6.7.5 of \cite{arveson}..
\end{remark}

Fix a left coherent section $e$ of decomposable vectors in $E$ such that the map \[\Omega\ni a\to e_{a}\in \displaystyle{\coprod_{b\in\Omega}}E(b)\] is measurable.
This is guaranteed by Prop. \ref{existencemeasurablesection}. Let $L^e$ be the $e$-logarithm w.r.t $e$. The left coherent section $e$ is fixed untill further mention.

We use the $e$-logarithm to construct a  pure isometric representation $V^{E}$ of $\Omega$ on a separable Hilbert space $\mathcal{H}$ as follows. We merely state the construction of $V^{E}$. More details can be found in pages 16-18 of \cite{SUNDAR}.
\begin{enumerate}[(1)]
\item For $a\in\Omega$, let \[\mathbb{C}_{0}\Delta(a):=\{f:\Delta(a) \to \mathbb{C}: \textrm{$f$ is finitely supported and $\sum_{u \in \Delta(a)}f(u)=0$}\}.\] Define a  semi-definite inner product on $\mathbb{C}_{0}\Delta(a)$ by
\begin{equation*}
\langle f|g\rangle=\sum_{u,v\in \Delta(a)}f(u)\overline{g(v)}L^{e}(a,u,v).
\end{equation*}
Let $H_{a}$ be the Hilbert space obtained from  $\langle ~|~\rangle$. For $u\in\Delta(a)$, let $\delta_{u}$ denote the indicator function at $u$. If $u_{1},u_{2}\in \Delta(a)$, the element $\delta_{u_{1}}-\delta_{u_{2}}\in H_{a}$ is denoted by $[u_{1}]-[u_{2}]$.
\item Let $a<b$.  Embed $H_a$ isometrically into $H_b$ via the map
\begin{equation*}
H_{a}\ni [u_{1}]-[u_{2}]\to [u_{1} v]-[u_{2}v]\in H_{b}
\end{equation*}
where $v$ is any element in $D(b-a)$. The embedding does not depend on the chosen element $v$.  The inductive limit of the Hilbert spaces $(H_{a})_{a \in \Omega}$ is denoted by $\mathcal{H}$. Then, the Hilbert space $\mathcal{H}$ is separable.
\item Let $a\in\Omega$, and let $v \in D(a)$. Let $V_a$ be the isometry on $\mathcal{H}$ defined by 
\begin{equation*}
V_a([u_{1}]-[u_{2}])=[v u_{1}]-[vu_{2}].
\end{equation*} The isometry $V_a$ does not depend on the chosen element $v$. Then,  $V^{E}=\{V_{a}\}_{a\in\Omega}$ is a pure isometric representation of $\Omega$ on $\mathcal{H}$. We call $V^E$ \emph{the isometric representation of $\Omega$ associated to the decomposable product system $E$}.
\end{enumerate}

Let $E|_{\Omega}$ denote the restriction of $E$ to $\Omega$. We will first describe the structure of $E|_{\Omega}$, and show that the same structure holds over $P$ as well, after going through a few extension results.  
 We will denote $V^{E}$ by $V$ for simplicity.  
 Define $\Gamma:\Omega\times\Omega\to\mathcal{H}$ by
\begin{equation*}
\Gamma(a,b)=[e_{b}]-[e(a,a+b)]
\end{equation*} for $a,b\in\Omega$.

We need the following version of Lemma 5.5.8 and Lemma 6.7.3 of \cite{arveson}.

\begin{lemma}\label{Gammameasurability} Keep the foregoing notation. Fix $b \in \Omega$. Then, the map \[\Omega \ni a\to \langle\Gamma(a,b)|\xi\rangle\in\mathbb{C}\] is measurable for every $\xi\in\mathcal{H}$.
\end{lemma}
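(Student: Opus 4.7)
My plan is to push the problem through the inductive-limit description of $\mathcal{H}$ and the defining properties $(1)$--$(4)$ of the $e$-logarithm until what remains is a statement about the measurability of the section $a \mapsto e_a$, which we have by construction.

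Fix an interior point $a_0 \in \Omega$. The sets $F_n:=\{a \in \Omega : a<na_0\}$ form an increasing open cover of $\Omega$, so it suffices to establish measurability on each $F_n$ separately; for a given $n$, I pick $c \in \Omega$ large enough that $a+b \leq c$ for every $a \in F_n$ and $b \leq c$ as well. Since $\mathcal{H}$ is the inductive limit of $(H_a)_{a \in \Omega}$, vectors of the form $[u_1]-[u_2]$ with $u_1,u_2 \in D(c)$ are total in $H_c$, and by a standard density argument it is enough to prove measurability of $a \mapsto \langle \Gamma(a,b)|\xi\rangle$ for such $\xi$. Under the embedding $H_b \hookrightarrow H_c$ implemented by right-multiplication with the propagator $e(b,c)$, the element $\Gamma(a,b)=[e_b]-[e(a,a+b)]$ reads $[e_c]-[e(a,a+b)\cdot e(b,c)]$ in $H_c$. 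Expanding the inner product and using that $L^e(c,e_c,u)=0$ (from property $(1)$ with $\|e_c\|=1$), one is reduced to proving that for every $u \in D(c)$ the map
\[
F_n \ni a \;\longmapsto\; L^{e}\bigl(c,\;e(a,a+b)\cdot e(b,c),\;u\bigr) \in \mathbb{C}
\]
is measurable.

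Write $u = v_1 \cdot v_2$ with $v_1 \in D(b)$ and $v_2 \in D(c-b)$, guaranteed by decomposability. Property $(4)$ of the $e$-logarithm, applied to $u_1=e(a,a+b)$, $u_2=e(b,c)$, $v_1$ and $v_2$, yields
\[
L^{e}\bigl(c,\,e(a,a+b)e(b,c),\,v_1 v_2\bigr)=L^{e}\bigl(b,\,e(a,a+b),\,v_1\bigr)+L^{e}(c-b,\,e(b,c),\,v_2)+\psi_b(c-b,\,e(b,c))+\overline{\psi_b(c-b,\,v_2)},
\]
and the last three terms are constant in $a$. So the problem reduces to showing that $a \mapsto L^{e}(b,e(a,a+b),v_1)$ is measurable for each fixed $v_1 \in D(b)$. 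By property $(1)$, this function differs from the logarithm of $\langle e(a,a+b)|v_1\rangle$ by a continuous (hence measurable) correction, the continuity of the branch being pinned down by property $(2)$. Therefore it suffices to show that $a \mapsto \langle e(a,a+b)|v_1\rangle$ is measurable.

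For this last step I use that $e_a \in D(a)$ is a unit vector, so left multiplication $L_{e_a}:E(b) \to E(a+b)$, $x \mapsto e_a x$, is an isometry whose adjoint satisfies $L_{e_a}^{\ast}e_{a+b}=e(a,a+b)$ in view of the identity $e_a\cdot e(a,a+b)=e_{a+b}$. Consequently
\[
\langle e(a,a+b)|v_1\rangle_{E(b)}=\langle e_{a+b}|e_a\cdot v_1\rangle_{E(a+b)},
\]
and the right-hand side is measurable in $a$ since the section $a \mapsto e_a$ is measurable on $\Omega$ by our choice, the translated section $a \mapsto e_{a+b}$ is therefore measurable, and the product and inner-product operations of the measurable product system $E$ are jointly measurable. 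Assembling these steps gives the claim. The main technical obstacle I expect is the bookkeeping across the varying fibres $E(a+b)$ and the embeddings $H_b \hookrightarrow H_c$, together with the need to keep the scalar logarithms on a single continuous branch; properties $(2)$ and $(4)$ of $L^e$ handle both issues cleanly, so the argument ultimately rests on the measurability of the chosen coherent section $e$.
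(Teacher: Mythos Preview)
Your reduction is sound and in fact parallels the paper's: after expanding the inner product and applying property~(4), both arguments arrive at the task of showing that
\[
a\;\longmapsto\;L^{e}\bigl(b,\,e(a,a+b),\,v\bigr)
\]
is measurable for fixed $v\in D(b)$. The detour through $H_c$ is harmless.

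The gap is in your final step. You write that $L^{e}(b,e(a,a+b),v_1)$ ``differs from the logarithm of $\langle e(a,a+b)\mid v_1\rangle$ by a continuous (hence measurable) correction, the continuity of the branch being pinned down by property~(2).'' Neither claim is justified. First, the correction involves $\log\langle e(a,a+b)\mid e_b\rangle=\log\langle e_{a+b}\mid e_a\, e_b\rangle$, which is only \emph{measurable} in $a$ (the section $e$ is only known to be measurable, not continuous), so nothing here is continuous. Second, and more seriously, property~(2) asserts continuity of $a\mapsto L^{e}(a,u_a,v_a)$ for left coherent sections $u,v$; it says nothing about the map $a\mapsto L^{e}(b,e(a,a+b),v_1)$ at a \emph{fixed} level $b$, which is what you need. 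Knowing that $e^{F(a)}$ is measurable does not force the particular lift $F(a)=L^{e}(b,e(a,a+b),v_1)$ to be measurable: any $2\pi i\,\mathbb{Z}$-valued (possibly non-measurable) perturbation has the same exponential. So the branch argument, as written, does not close.

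The paper handles exactly this point differently: it invokes the partition-sum formula for the $e$-logarithm (Remark~6.5.7 of Arveson together with Remark~3.14 of \cite{SUNDAR}) to write
\[
L^{e}\bigl(b,\,e(a,a+b),\,u\bigr)=\lim_{n\to\infty}\sum_{k=0}^{n-1}\bigl(\langle e_k(a)\mid u_k\rangle-1\bigr),
\]
and then checks that each inner product $\langle e_k(a)\mid u_k\rangle$ can be rewritten as a quotient of quantities of the form $\langle e_{a+t_{k+1}b}\mid e_{a+t_k b}\,w\rangle$, which are measurable in $a$ because $a\mapsto e_a$ is. This expresses $L^{e}$ directly as a pointwise limit of measurable functions, sidestepping any branch-of-logarithm issue. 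That partition identity is the missing ingredient in your argument.
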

\begin{proof}
 Since $\Gamma(a,b)\in Ker({V}_{b}^{*})$, it suffices to prove that
\begin{equation*}
a\to\langle[e_{b}]-[e(a,a+b)]|[u_{1}]-[u_{2}]\rangle\in\mathbb{C}
\end{equation*} is Borel for each $u_{1},u_{2}\in D(b)$.
Let $u_1,u_2 \in D(b)$ be given. Note that 
\begin{align*}%\label{eqtn}
\langle[e_{b}]-[e(a,a+b)]|[u_{1}]-[u_{2}]\rangle=&\langle[e_{b}]-[e(a,a+b)]|[u_{1}]-[u_{2}]\rangle\\
                    =&L^{e}(b,e_{b},u_{1})-L^{e}(b,e_{b},u_{2})\\
                    -&
                    L^{e}(b,e(a,a+b),u_{1})+L^{e}(b,e(a,a+b), u_{2}).
\end{align*}
  The term involving $a$ in RHS is $L^{e}(b,e(a,a+b),u_{1})-L^{e}(b,e(a,a+b), u_{2})$. Thus, it suffices to prove that given 
 $u \in D(b)$, the map 
$\Omega \ni a \to L^{e}(b,e(a,a+b),u) \in \mathbb{C}$
is measurable. 

Fix $u \in D(b)$. 
Let $a_{0}\in\Omega$ be fixed. Let $(0,a_{0})=\{a\in\Omega:a<a_{0}\}$. We show that the map
\begin{equation*}
(0,a_{0})\ni a \to L^{e}(b,e(a,a+b),u)\in\mathbb{C}
\end{equation*} is Borel. 

 For $n\in \mathbb{N}$, let $P_{n}=\{0=t_0<t_1<t_2<\cdots <t_{n}=1\}$ be a partition  of $[0,1]$ into $n$ subintervals $[t_{k},t_{k+1}]$. Assume that $|P_{n}|\to 0$ as $n\to\infty$. Let $\{u_{a}\}_{a\in P}$ be a left coherent section such that $u_{b}=u$. Let $n\in\mathbb{N}$. For each $k=0,1,2,\cdots , n-1$, let \begin{equation*}
u_{k}=\frac{u(t_{k}b,t_{k+1}b)}{\langle u(t_{k}b,t_{k+1}b)|e(t_{k}b,t_{k+1}b)\rangle}.
\end{equation*} 
Similarly, let 
\begin{equation*}
e_{k}(a)=\frac{e(a+t_{k}b,a+t_{k+1}b)}{\langle e(a+t_{k}b,a+t_{k+1}b)|e(t_{k}b,t_{k+1}b)\rangle}
\end{equation*}
for each $a\in\Omega$.
Fix $a\in\Omega$. 
By Remark 6.5.7 of \cite{arveson} and by Remark 3.14 of \cite{SUNDAR}, we have
\begin{equation*}
L^{e}(b,e(a,a+b),u)=\displaystyle{\lim_{n\to\infty}}\sum_{k=0}^{n-1}(\langle e_{k}(a)|u_{k}\rangle-1).
\end{equation*}
 Observe that
\begin{align*}\langle e_{a+t_{k+1}b}|e_{a+t_{k}b}u(t_{k}b,t_{k+1}b)\rangle=&\langle e_{a+t_{k}b}e(a+t_{k}b, a+t_{k+1b})|e_{a+t_{k}b}u(t_kb,t_{k+1}b)\rangle\\
                =&||e_{a+t_{k}b}||^{2}\langle e(a+t_{k}b,a+t_{k+1}b)|u(t_{k}b,t_{k+1}b)\rangle\\
                =&\langle e(a+t_{k}b,a+t_{k+1}b)|u(t_{k}b,t_{k+1}b)\rangle.\\
\end{align*}
Similarly, \[\langle e_{a+t_{k+1}b}|e_{a+t_{k}b}e(t_{k}b,t_{k+1}b)\rangle=\langle e(a+t_{k}b,a+t_{k+1}b)|e(t_{k}b,t_{k+1}b)\rangle\] for $k=1,2,\cdots ,n-1$.
Thus,
\begin{equation*}
\langle e_{k}(a)|u_{k}\rangle= \frac{\langle e_{a+t_{k+1}b}|e_{a+t_{k}b}u(t_{k}b,t_{k+1}b)\rangle}{\langle e_{a+t_{k+1}b}|e_{a+t_{k}b}e(t_{k}b,t_{k+1}b)\rangle\langle u(t_{k}b,t_{k+1}b)|e(t_{k}b,t_{k+1}b)\rangle }
\end{equation*} for each $k=0,1,2,\cdots n-1$.
Now the conclusion follows from the fact that $e=\{e_a\}_{a \in \Omega}$ is measurable. 
\end{proof}
Let $\alpha:\Omega\times\Omega\to\mathbb{T}$ be defined by 
\begin{equation*}
\alpha(a,b)=\frac{\langle e_{b}|e(a,a+b)\rangle}{|\langle e_{b}|e(a,a+b)\rangle|}
\end{equation*} for $a,b\in\Omega$.
\begin{lemma} With the foregoing notation, the map $\Gamma$ is  a 2-cocycle. The map $\alpha$ is Borel, and satisfies the equation
\[
\frac{\alpha(a,b)\alpha(a+b,c)}{\alpha(a,b+c)\alpha(b,c)}=e^{i Im \langle \Gamma(a,b+c)|V_b\Gamma(b,c)\rangle}\] for each $a,b,c\in\Omega$.
\end{lemma}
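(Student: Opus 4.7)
The claim has three parts: (i) $\Gamma$ is a $2$-cocycle; (ii) $\alpha$ is Borel; (iii) the admissibility identity. Measurability of $\Gamma$ is Lemma~\ref{Gammameasurability}, and the condition $\Gamma(a,b)\in\ker(V_b^*)$ follows from the standard identification $H_b=\ker(V_b^*)$ inside $\mathcal{H}$ arising from the inductive-limit construction of $V^E$. Orthogonality $H_b\perp V_b\mathcal{H}$ can be verified by expanding $\langle[u_1 z]-[u_2 z],[w v_1]-[w v_2]\rangle$ via the multiplicative property of $L^e$ and observing that all the surviving terms cancel in pairs; density of $H_b+V_b\mathcal{H}$ follows from the decomposition $[u_1 w_1]-[u_2 w_2]=([u_1 w_1]-[u_2 w_1])+([u_2 w_1]-[u_2 w_2])$ together with the decomposability factorisation $v_i=u_i w_i$.

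For the cocycle identity
\[
\Gamma(a,b)+V_b\Gamma(a+b,c)=\Gamma(a,b+c)+V_b\Gamma(b,c),
\]
my plan is to realise both sides as explicit elements of $H_{b+c}$ and check that they coincide term by term. The key tool is that the embedding $H_b\hookrightarrow H_{b+c}$, $[u_1]-[u_2]\mapsto[u_1 w]-[u_2 w]$, is independent of $w\in D(c)$. Taking $w=e(a+b,a+b+c)$ and invoking the propagator identity $e(a,a+b)\cdot e(a+b,a+b+c)=e(a,a+b+c)$, the embedded $\Gamma(a,b)$ becomes $[e_b\,e(a+b,a+b+c)]-[e(a,a+b+c)]$; adding $V_b\Gamma(a+b,c)=[e_b e_c]-[e_b\,e(a+b,a+b+c)]$ yields $[e_b e_c]-[e(a,a+b+c)]$. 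For the right-hand side, left coherence $e_b\,e(b,b+c)=e_{b+c}$ gives $V_b\Gamma(b,c)=[e_b e_c]-[e_{b+c}]$, and adding $\Gamma(a,b+c)=[e_{b+c}]-[e(a,a+b+c)]$ yields the same result.

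For the Borelness of $\alpha$, I would adjoint left multiplication by $e_a$ to write $\langle e_b,e(a,a+b)\rangle=\overline{\langle e_a e_b,e_{a+b}\rangle}$, so that $\alpha(a,b)=\overline{\langle e_a e_b,e_{a+b}\rangle}/|\langle e_a e_b,e_{a+b}\rangle|$; Borel measurability then follows from the measurability of $a\mapsto e_a$ on $\Omega$ and the measurable structure of $E$. For the admissibility identity, I expand $\langle\Gamma(a,b+c),V_b\Gamma(b,c)\rangle$ bilinearly into four $L^e$-terms. Three of them vanish because the defining relation of $L^e$ together with $\|e_{b+c}\|=1$ gives $L^e(b+c,e_{b+c},\cdot)=L^e(b+c,\cdot,e_{b+c})=0$. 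What remains is $-L^e(b+c,e(a,a+b+c),e_b e_c)$. Evaluating this via the defining formula for $L^e$ and factorising the inner products on $E(b+c)$ through the tensor-product identity $E(b)\otimes E(c)\cong E(b+c)$ with $e(a,a+b+c)=e(a,a+b)\,e(a+b,a+b+c)$ and $e_{b+c}=e_b\,e(b,b+c)$, each of $\langle e(a,a+b+c),e_b e_c\rangle$, $\langle e(a,a+b+c),e_{b+c}\rangle$, $\langle e_{b+c},e_b e_c\rangle$ becomes a positive modulus multiplied by a conjugate of $\alpha(a,b)\alpha(a+b,c)$, $\alpha(a,b+c)$, $\alpha(b,c)$ respectively. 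Taking imaginary parts cancels all moduli and yields
\[
\mathrm{Im}\langle\Gamma(a,b+c),V_b\Gamma(b,c)\rangle=\arg\frac{\alpha(a,b)\alpha(a+b,c)}{\alpha(a,b+c)\alpha(b,c)},
\]
which exponentiates to the claimed identity.

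The main obstacle is the phase bookkeeping in the admissibility calculation: one must track which inner products get conjugated when switching between $\langle e_b,e(a,a+b)\rangle$ and $\langle e(a,a+b),e_b\rangle$, and verify that the three spurious $L^e$-contributions vanish via the normalisation $L^e(a,e_a,\cdot)=L^e(a,\cdot,e_a)=0$. Once organised, the identity drops out directly from the defining equation of $L^e$ and the tensor-product factorisation, without ever invoking the auxiliary function $\psi$ appearing in the multiplicative property of the $e$-logarithm.
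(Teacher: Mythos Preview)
Your proposal is correct and follows essentially the same approach as the paper. For the cocycle identity you work directly inside $H_{b+c}$ via the embedding $H_b\hookrightarrow H_{b+c}$, while the paper first applies $V_a$ (left multiplication by $e_a$) and computes inside the larger space before multiplying back by $V_a^{*}$; these are the same term-by-term cancellation viewed from two sides. Your admissibility computation---reducing the four $L^e$-terms to $-L^e(b+c,e(a,a+b+c),e_b e_c)$ via the normalisation $L^e(\cdot,e_{\cdot},\cdot)=0$ and then factorising the inner products through $E(b)\otimes E(c)\cong E(b+c)$---is exactly the paper's argument, as is your rewriting of $\langle e_b,e(a,a+b)\rangle$ as $\overline{\langle e_a e_b,e_{a+b}\rangle}$ for the Borelness of $\alpha$. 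One small point: Lemma~\ref{Gammameasurability} only gives measurability of $a\mapsto\langle\Gamma(a,b),\xi\rangle$ for fixed $b$; joint measurability in $(a,b)$, as the definition of $2$-cocycle requires, follows from the coherence relation $\Gamma(a,b)=E_b^{\perp}\Gamma(a,b_N)$ (itself a consequence of the cocycle identity you have just proved) together with strong continuity of $b\mapsto E_b^{\perp}$.
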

\begin{proof}
 Note that, for $b \in \Omega$, $Ker(V_{b}^{*})$ is spanned by  $\{[u_{1}]-[u_{2}]: u_{1},u_{2}\in D(b)\}$.
Hence, it is clear that $\Gamma(a,b)\in Ker(V_{b}^{*})$ for $a,b\in\Omega$. 

To see that $\Gamma$ is a $2$-cocycle, we repeat the calculation done in Lemma 5.5.7 of \cite{arveson}.
For $a,b,c\in\Omega$, calculate as follows to observe that
\begin{align*}
&V_{a+b}\Gamma(a+b,c)-V_{a}\Gamma(a,b+c)-V_{a+b}\Gamma(b,c)\\
&=([e_{a+b}e_{c}]-[e_{a+b+c}])-([e_{a}e_{b+c}]-[e_{a+b+c}])
-([e_{a}e_{b}e_{c}]-[e_{a}e_{b+c}])\\
                           &= [e_{a+b}e_{c}]-[e_{a}e_{b}e_{c}]\\
                           &=[e_{a+b}]-[e_{a}e_{b}]\\
                           &=-V_{a}\Gamma(a,b).
\end{align*}
Multiplying  the above equation by $V_{a}^{*}$, we see that $\Gamma$ satisfies the cocycle identity.

Fix $b_{0}\in\Omega$. For $n\in\mathbb{N}$, let $b_{n}=nb_{0}$. Let $b\in\Omega$. Let $N\in\mathbb{N}$ be such that $b<b_{N}$. By the cocycle identity, observe that 
\begin{equation}
\label{coherence of Gamma}
\Gamma(a,b)=E_{b}^{\perp}\Gamma(a,b_{N}).
\end{equation}
By Lemma \ref{Gammameasurability}, the map $\Omega \ni a \to \Gamma(a,b_N) \in \mathcal{H}$ is Borel. Eq. \ref{coherence of Gamma} implies that the map 
\[
\Omega \times \Omega \ni (a,b) \to \Gamma(a,b) \in \mathcal{H}\]
is measurable. 

Now we show the admissibility of $\Gamma$.
Calculate as follows to observe that for $a,b,c \in \Omega$,
\begin{align*}
e^{\langle\Gamma(a,b+c)|V_{b}\Gamma(b,c)\rangle}=&e^{\langle [e_{b+c}]-[e(a,a+b+c)]|[e_{b}e_{c}]-[e_{b+c}]\rangle}\\
=&e^{-L^{e}(b+c, e(a,a+b+c),e_{b}e_{c})}\\
=&\frac{\langle e(a,a+b+c)|e_{b+c}\rangle\langle e_{b+c}|e_{b}e_{c}\rangle}{\langle e(a,a+b+c)|e_{b}e_{c}\rangle}\\
=&\frac{\langle e(a,a+b+c)|e_{b+c}\rangle\langle e(b,b+c)|e_{c}\rangle}{\langle e(a,a+b+c)|e_{b}e_{c}\rangle}\\
=&\frac{\langle e(a,a+b+c)|e_{b+c}\rangle\langle e(b,b+c)|e_{c}\rangle}{\langle e(a,a+b)|e_b\rangle \langle e(a+b,a+b+c)|e_c\rangle}.
\end{align*}
Notice that for $a,b,c \in \Omega$, \begin{align*}
e^{iIm\langle\Gamma(a,b+c)|V_{b}\Gamma(b,c)\rangle}=&\frac{e^{\langle\Gamma(a,b+c)|V_{b}\Gamma(b,c)\rangle}}{|e^{\langle\Gamma(a,b+c)|V_{b}\Gamma(b,c)\rangle}|}
\end{align*}
It is now clear that
\begin{equation*}
e^{iIm\langle\Gamma(a,b+c)|V_{b}\Gamma(b,c)\rangle}=\frac{\alpha(a,b)\alpha(a+b,c)}{\alpha(a,b+c)\alpha(b,c)}
\end{equation*} for each $a,b,c\in\Omega$. Since $\langle e(a,a+b)|e_{b}\rangle=\langle e_{a+b}|e_{a}e_{b}\rangle$ and since $\Omega\ni a\to e_{a}\in E$ is measurable, $\alpha$ is Borel. Now the proof is complete.
 \end{proof}
 
As in Prop. \ref{->}, we may construct the product system  $E^{(\alpha,\Gamma,V)}$ over $\Omega$, i.e. for $a \in \Omega$, $E^{\alpha,\Gamma,V}(a)=\Gamma_s(Ker(V_a^{*}))$, and the product rule on the  exponential vectors is given by
\begin{equation*}
e(\xi).e(\eta)=\alpha(a,b) e(\xi)\odot W(\Gamma(a,b))e(\eta)
\end{equation*} where $\xi \in Ker(V_{a}^{*})$, $\eta\in Ker(V_{b}^{*})$ and $\odot$ is the multiplication rule of the product system of the CCR flow $\alpha^{V}$.
\begin{prop}\label{structure over omega}
With the foregoing notation, the restricted product system $E|_{\Omega}$ is isomorphic to $E^{(\alpha,\Gamma,V)}$.
\end{prop}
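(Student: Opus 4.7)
The plan is to construct an explicit fiberwise unitary $\theta = \{\theta_a\}_{a \in \Omega}$ from $E|_\Omega$ to $E^{(\alpha,\Gamma,V)}$, defined on decomposable vectors via the $e$-logarithm data, and then verify that it intertwines the two multiplications. For $a \in \Omega$ and a decomposable vector $u \in D(a)$, set
\[
\theta_a(u) := \langle u | e_a \rangle \, e\bigl([u] - [e_a]\bigr),
\]
where $e(\cdot)$ denotes the exponential vector in $\Gamma_s(Ker(V_a^*))$. Observe that, by the defining identity for $L^e$ together with $\|e_a\|=1$, one has $L^e(a, u, e_a) = L^e(a, e_a, v) = 0$ for all $u,v \in D(a)$. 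The first step is to check that $\theta_a$ preserves inner products on the total set of decomposables: the identity $\langle e(\xi)|e(\eta)\rangle = e^{\langle \xi|\eta\rangle}$ combined with the observation that the four cross-terms in $\langle [u]-[e_a]|[v]-[e_a]\rangle$ collapse to $L^e(a,u,v)$ gives
\[
\langle \theta_a(u)|\theta_a(v)\rangle = \langle u|e_a\rangle \overline{\langle v|e_a\rangle} \, e^{L^e(a,u,v)} = \langle u|v\rangle,
\]
using the defining formula $e^{L^e(a,u,v)} = \langle u|v\rangle / (\langle u|e_a\rangle \langle e_a|v\rangle)$. Thus $\theta_a$ extends by linearity and continuity to an isometry on $E(a)$; surjectivity follows since $\{[u]-[e_a] : u \in D(a)\}$ is total in $Ker(V_a^*)$, hence $\{e([u]-[e_a])\}$ is total in $\Gamma_s(Ker(V_a^*))$.

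The crucial step is to verify the multiplicative property
\[
\theta_{a+b}(uv) = \theta_a(u) \cdot \theta_b(v),
\]
where the right-hand product is the multiplication in $E^{(\alpha,\Gamma,V)}$. Expanding using the definition of the twisted product, the Weyl relation $W(\eta) e(\xi) = e^{-\|\eta\|^2/2 - \langle \xi|\eta\rangle}\, e(\xi+\eta)$, and the CCR-flow rule $e(\xi) \odot e(\eta) = e(\xi + V_a \eta)$, the right-hand side becomes
\[
\langle u|e_a\rangle \langle v|e_b\rangle \, \alpha(a,b) \, e^{-\|\Gamma(a,b)\|^2/2 - \langle [v]-[e_b] | \Gamma(a,b)\rangle} \, e\bigl([u]-[e_a] + V_a([v]-[e_b]) + V_a\Gamma(a,b)\bigr).
\]
Substituting $\Gamma(a,b) = [e_b] - [e(a,a+b)]$ and using $V_a[v] = [e_a v]$, $V_a[e(a,a+b)] = [e_{a+b}]$, and the identification $[u]-[e_a] \hookrightarrow [uv]-[e_a v]$ under the embedding $H_a \hookrightarrow H_{a+b}$, the argument of the exponential vector collapses to $[uv]-[e_{a+b}]$. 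Meanwhile, on the left-hand side,
\[
\theta_{a+b}(uv) = \langle uv | e_{a+b}\rangle \, e([uv]-[e_{a+b}]) = \langle u|e_a\rangle \langle v|e(a,a+b)\rangle \, e([uv]-[e_{a+b}]).
\]
Matching the two expressions reduces to the scalar identity
\[
\langle v|e(a,a+b)\rangle = \langle v|e_b\rangle \, \alpha(a,b)\, e^{-\|\Gamma(a,b)\|^2/2 - \langle [v]-[e_b] | \Gamma(a,b)\rangle},
\]
which, after expressing each inner product on the right through $L^e$ and invoking $\|e(a,a+b)\| = 1$, the definition of $\alpha$, and $L^e(b, e_b, \cdot) = L^e(b, \cdot, e_b) = 0$, collapses to a tautology. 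This is the main obstacle: the bookkeeping of scalars between the Weyl action, the CCR-flow product, and the defining identities for $L^e$ and $\alpha$ must align perfectly, and it is essential that the phase factor $\alpha(a,b) = \langle e_b|e(a,a+b)\rangle / |\langle e_b|e(a,a+b)\rangle|$ was chosen precisely to absorb the modulus obstruction coming from $e^{-\|\Gamma(a,b)\|^2/2}$.

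Finally, measurability of $\theta$ as a map of measurable fields over $\Omega$ follows from: (i) the measurability of $a \mapsto e_a$ (by Proposition \ref{existencemeasurablesection}); (ii) the measurability of $\Gamma$ established earlier in this section; and (iii) the measurability of the $e$-logarithm in its arguments, which has been used repeatedly above. Combining all the steps, $\theta$ is a measurable isomorphism of product systems over $\Omega$, proving $E|_\Omega \cong E^{(\alpha,\Gamma,V)}$.
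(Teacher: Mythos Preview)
Your construction of $\theta_a$ and your verifications of inner-product preservation and multiplicativity follow exactly the paper's approach, and those parts are fine. However, there is a genuine gap in the surjectivity step.

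You claim that since $\{[u]-[e_a] : u \in D(a)\}$ is total in $Ker(V_a^*)$, the set $\{e([u]-[e_a])\}$ is total in $\Gamma_s(Ker(V_a^*))$. This implication is false in general: totality (dense linear span) of a subset $S \subset \mathcal{K}$ does \emph{not} imply totality of $\{e(\xi) : \xi \in S\}$ in $\Gamma_s(\mathcal{K})$. The standard theorem requires $S$ to be \emph{dense} as a set, and there is no reason a priori that $\{[u]-[e_a] : u \in D(a)\}$ is dense rather than merely spanning. This is exactly the obstacle the paper works around: it restricts to the one-parameter ray $\{ta : t>0\}$, applies Arveson's one-parameter machinery (specifically Theorems~5.5.1 and~5.5.5 of \cite{arveson}) to produce a vector $\xi_1$ such that $\{e([u]-[e_a]-\xi_1):u\in D(a)\}$ is total, and then translates by the Weyl operator $W(\xi_1)$ to conclude. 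You need this nontrivial input from the one-parameter theory; your argument as written does not establish that $\theta_a$ is surjective.

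A secondary point: your measurability argument is too sketchy. Saying that $e_a$, $\Gamma$, and $L^e$ are each measurable does not by itself show that $a \mapsto \theta_a$ is a measurable field map. The paper checks this by taking an arbitrary left-coherent measurable section $(u_a)$ (cf.\ Remark~\ref{existencemeasurablesection1}) and computing $\langle [u_a]-[e_a]\,|\,[v_1]-[v_2]\rangle$ explicitly via the additivity property of $L^e$ involving $\psi_a$, reducing it to a continuous expression. You should at least indicate which sections you test against and why the resulting scalar function is Borel.
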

\begin{proof}
Let $F=E^{(\alpha,\Gamma,V)}$ for simplicity. 
Fix $a\in\Omega$. Define $\Psi_{a}:D(a)\to F(a)$ by
\begin{equation*}
\Psi_{a}(u)=\langle u|e_{a}\rangle e([u]-[e_{a}])
\end{equation*} for $u\in D(a)$.
It follows from the definition of $L^e$ that
\begin{equation*}
\langle\Psi_{a}(u)|\Psi_{a}(v)\rangle=\langle u|v\rangle
\end{equation*} for $u,v\in D(a)$.

Let $a\in\Omega$. Consider the one parameter path space 
 \[
 \widetilde{\Delta}=\coprod_{t>0}\widetilde{\Delta}(t)\]
 where $\widetilde{\Delta}(t):=\{(t,\dot{u}): u \in D(ta)\}$.
 
 Similarly, we may define $\widetilde{\Delta}^{(2)}$.
 Consider the e-logarithm $L^{e}$ of $E$. Restrict $L^{e}$ to $\widetilde{\Delta}^{2}$ which we denote by $\widetilde{L}^{e}$.  If we apply Arveson's method of constructing a $1$-parameter isometric representation, say $W:=\{W_t\}_{t \geq 0}$, out of the path space $(\widetilde{\Delta},\widetilde{L}^e)$  described in Chapter 5 of \cite{arveson}, we see that  $W_t=V_{ta}$ for every $t>0$. 
 
   By Thm. 5.5.5 of \cite{arveson}, there exists a measurable map $\xi:(0,\infty)\to\mathcal{H}$  such that 
  \begin{enumerate}
  \item[(1)] for $t>0$, $\xi_t \in Ker(V_{ta}^{*})$, and
  \item[(2)] for $s,t>0$,$
 V_{sa}\Gamma(sa,ta)=\xi_{s+t}-\xi_{s}-V_{sa}\xi_{t}$.
 \end{enumerate}
 Thanks to Thm. 5.5.1 of \cite{arveson}, $\{e([u]-[e_{a}]-\xi_{1}): u\in D(a)\}$ is total in $F(a)$. Consequently, its image under $W(\xi_1)$ is total in $F(a)$. Hence, $\{e([u]-[e_a]): u \in D(a)\}$ is total in $F(a)$.  Therefore, $\Psi_{a}(D(a))$ is total in $F(a)$. 

 Again, a routine computation verifies that
 \begin{equation*}
 \Psi_{a+b}=\Psi_{a}\Psi_{b}
\end{equation*} for $a,b\in\Omega$.

We now show that $\Psi:E|_{\Omega}\to F$ is measurable. Thanks to Remark \ref{existencemeasurablesection1}, it suffices to show that  for any left coherent measurable section $(u_{a})_{a\in P}$ of decomposable vectors, the map $\Omega\ni a\to \Psi_{a}(u_{a})\in \Gamma_{s}(\mathcal{H})$ is measurable.
Let $(u_{a})_{a\in\Omega}$ be a left coherent measurable section of decomposable vectors. For $b \in \Omega$, let 
\[
(0,b):=\{c \in \Omega: c<b\}.
\]
Fix $b \in \Omega$. It suffices to show that the map 
\[
(0,b) \ni a \to [u_a]-[e_a] \in \mathcal{H}
\]
is measurable. 

Note that for  $a \in (0,b)$, $[u_a]-[e_a] \in Ker(V_b^*)$ and   $\{[u_1]-[u_2]:v_1,v_2 \in D(b)\}$ is total in $Ker(V_b^*)$. Thus, it suffices to show that for $u_1,u_2 \in D(b)$, the map 
\[
(0,b) \ni a \to \langle [u_a]-[e_a]|[u_1]-[u_2]\rangle \in \mathbb{C} 
\]
is measurable. 

For $a \in \Omega$, let $\psi_a:\Delta \to \mathbb{C}$ be such that for $u_{1},v_{1}\in D(a)$ and $u_2,v_2 \in D(c)$, with $c \in \Omega$,
\begin{equation*}
L^{e}(a+c,u_{1}u_{2},v_{1}v_{2})=L^{e}(a,u_{1},v_{1})+L^{e}(c,u_{2},v_{2})+\psi_{a}(c,u_{2})+\overline{\psi_{a}(c,v_{2})}.
\end{equation*}

Let $v_1,v_2 \in D(b)$ be given.   Let $\{(v_{i})_{a}\}_{a\in P}$ be  left coherent sections such that $(v_{i})_{b}=v_{i}$ for $i=1,2$. For $a \in (0,b)$, choose  $v\in D(b-a)$ and calculate as follows to observe that 

\begin{align*}
&\langle [u_{a}]-[e_{a}]|[v_{1}]-[v_{2}]\rangle\\
&=\langle [u_{a}v]-[e_{a}v]|[v_{1}]-[v_{2}]\rangle\\
&= L^{e}(a,u_{a},(v_{1})_{a})+L^{e}(b-a,v,v_{1}(a,b))+\psi_{a}(b-a,v)+\overline{\psi_{a}(b-a,v_{1}(a,b))}\\
&-L^{e}(a,e_{a},(v_{1})_{a})-L^{e}(b-a, v,v_{1}(a,b) )-\psi_{a}(b-a, v)-\overline{\psi_a(b-a, v_{1}(a,b))}\\
&-L^{e}(a,u_{a},(v_{2})_{a})-L^{e}(b-a,v,v_{2}(a,b))-\psi_{a}(b-a,v)-\overline{\psi_{a}(b-a,v_{2}(a,b))}\\
&+L^{e}(a,e_{a},(v_{2})_{a})+L^{e}(b-a, v,v_{2}(a,b) )+\psi_{a}(b-a,v)+\overline{\psi_a(b-a, v_{2}(a,b))}\\
&= L^{e}(a,u_{a},(v_{1})_{a})-L^{e}(a,e_{a},(v_{1})_{a})-L^{e}(a,u_{a},(v_{2})_{a})+L^{e}(a,e_{a},(v_{2})_{a}).
\end{align*} Thanks to the continuity of $L^e$, the map 
\[
(0,b) \ni a \to \langle [u_a]-[e_a]|[v_1]-[v_2] \rangle \in \mathbb{C}\]
is continuous. The measurability of $\Psi$ follows. 
Now the proof is complete.
\end{proof}

Note that  that the isometric representation $V$ on $\mathcal{H}$ extends from $\Omega$ to $P$ in the following sense: 
There exists a strongly continuous (pure) isometric representation $\widetilde{V}$ of $P$ on $\clh$ such that
$
\widetilde{V}_{a}=V_{a}$
 for $a\in\Omega$. 
Extend the  2-cocycle $\Gamma$ over $\Omega$  to a 2-cocycle $\widetilde{\Gamma}$ over $P$. This is guaranteed by Corollary \ref{extension third section}. 
\begin{remark}
\label{equality of algebraic cocycles}
We collect a fact regarding semigroup cohomology that we need. Let $S$ be a subsemigroup of $\R^{d}$, and let $G$ be an abelian group. For each $n\in\bbn$, let $C^{n}$ denote the set of all maps from $S^n \to G$. For each $n\in\mathbb{N}$, let $\delta^{n}:C^{n}\to C^{n+1}$ be the map defined by
\begin{align*}\delta^{n}f(x_{1},x_{2},\cdots , x_{n},x_{n+1})=&f(x_{2},x_{3},\cdots ,x_{n+1})\\
+&\sum_{i=1}^{n}(-1)^{i}f(x_{1},\cdots x_{i-1},x_{i}+x_{i+1},x_{i+2},\cdots, x_{n+1})\\
+&(-1)^{n+1}f(x_{1},x_{2},\cdots ,x_{n})\end{align*} for $f\in C^{n}$. 

For each $n\in\mathbb{N}$, $\delta^{n+1}\circ \delta^{n}=0$. Set $Z^n_{alg}(S,G):=Ker(\delta^{n})$ and $B^{n}_{alg}(S,G):=Im(\delta^{n-1})$. 
Set 
\[
H^n_{alg}(S,G):=\frac{Z^{n}_{alg}(S,G)}{B^{n}_{alg}(S,G)}.\]

Then, the  restriction map \[Res:H_{alg}^{n}(\R^d,\mathbb{T}) \to H_{alg}^n(P,\mathbb{T})\] and the restriction map \[Res:H_{alg}^{n}(P,\mathbb{T}) \to H_{alg}^n(\Omega,\mathbb{T})\] are isomorphisms. For a proof, the reader is referred to Prop. 4.1 (Page 191) of \cite{cartan}.
\end{remark}

Consider the map $F:P\times P\times P \to \mathbb{T}$ defined by \[F (a,b,c)= e^{iIm\langle \widetilde{\Gamma}(a,b+c)|\widetilde{V}_{b}\widetilde{\Gamma}(b,c)\rangle}.\]
Since $\widetilde{\Gamma}:P\times P\to\mathcal{H}$ is norm continuous, the map $F$ is continuous. The fact that  $F|_{\Omega} \in Z^{3}_{alg}(\Omega,\bbt)$  and the fact that $F$ is continuous imply  that $F\in Z_{alg}^{3}(P,\mathbb{T})$. Since $F|_{\Omega}$ is a coboundary, by Remark \ref{equality of algebraic cocycles}, $F \in B^{3}_{alg}(P,\mathbb{T})$. 
 Thus, there exists a map $\widetilde{\alpha}:P\times P\to\mathbb{T}$ such that 
\begin{equation*}
e^{iIm\langle\widetilde{\Gamma}(a,b+c)|\widetilde{V}_{b}\widetilde{\Gamma}(b,c)\rangle}=\frac{\widetilde{\alpha}(a,b)\widetilde{\alpha}(a+b,c)}{\widetilde{\alpha}(a,b+c)\widetilde{\alpha}(b,c)}
\end{equation*} for $a,b,c\in P$. Again, using Remark \ref{equality of algebraic cocycles}, we can choose $\widetilde{\alpha}$ in such a way that for $a,b \in \Omega$, $\widetilde{\alpha}(a,b)=\alpha(a,b)$.

 Let $\widetilde{\alpha}$, $\widetilde{\Gamma}$ and $\widetilde{V}$ be as above. We can construct an algebraic product system $E^{(\widetilde{\alpha},\widetilde{\Gamma},\widetilde{V})}$ using the triple $(\widetilde{\alpha},\widetilde{\Gamma},\widetilde{V})$ . We let $E^{(\widetilde{\alpha},\widetilde{\Gamma},\widetilde{V})}=\{\Gamma_{s}(Ker(\widetilde{V}_{a}^{*}))\}_{a\in P}$, and define the multiplication on the exponential vectors by
 \[e(\xi)e(\eta)=\widetilde{\alpha}(a,b)e(\xi)\odot W(\Gamma(a,b))e(\eta)\] for $a,b\in P$, $\xi\in Ker(\widetilde{V}_{a}^{*})$ and $\eta\in Ker(V_{b}^{*})$. For simplicity, we denote  $E^{(\widetilde{\alpha},\widetilde{\Gamma},\widetilde{V})}$ by $\widetilde{E}$. 
 
 \begin{remark}
 \label{same decom vectors}
For $a\in P$, let $\widetilde{D}(a)$ denote the set of decomposable vectors in $\widetilde{E}(a)$. Note that $\widetilde{E}$ and $E^{\widetilde{V}}$ (the product system of the CCR flow $\alpha^{\widetilde{V}}$) have the same set of decomposable vectors. 
 Therefore, for $a \in P$, \[\widetilde{D}(a)=\{\lambda e(\xi): \lambda \in \bbc\setminus\{0\},  \xi \in Ker(\widetilde{V}_a^*)\}.\]
 It can be proved as in Prop. \ref{->} that $\widetilde{E}$ is decomposable. 
\end{remark}

Keep the foregoing notation.
 \begin{prop}
 \label{iso with algebraic}    
  The algebraic product systems $E$ and $\widetilde{E}$ are isomorphic.
 \end{prop}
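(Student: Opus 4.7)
The plan is to deduce Prop. \ref{iso with algebraic} as a direct consequence of Corollary \ref{over omega implies over P1}, which asserts that two algebraic product systems over $P$, each possessing a left coherent section, are isomorphic over $P$ provided they are isomorphic over $\Omega$. Thus the strategy is to (i) verify that both $E$ and $\widetilde{E}$ are algebraic product systems admitting left coherent sections, and (ii) produce an isomorphism between their restrictions to $\Omega$.

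For step (i): $E$ is decomposable by hypothesis, and left coherent sections of decomposable vectors are furnished by Prop. \ref{existencemeasurablesection} (via Prop.~3.3 of \cite{SUNDAR}). For $\widetilde{E}$, Remark \ref{same decom vectors} shows that it is decomposable (with decomposable vectors given by nonzero scalar multiples of exponential vectors $e(\xi)$ for $\xi\in Ker(\widetilde{V}_a^*)$), and the same argument supplies a left coherent section.

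For step (ii): by construction, the triple $(\widetilde{\alpha},\widetilde{\Gamma},\widetilde{V})$ was chosen to extend $(\alpha,\Gamma,V)$, i.e. $\widetilde{V}_a = V_a$ for $a\in\Omega$, $\widetilde{\Gamma}|_{\Omega\times\Omega} = \Gamma$, and $\widetilde{\alpha}|_{\Omega\times\Omega} = \alpha$. Consequently, $\widetilde{E}|_{\Omega}$ coincides with $E^{(\alpha,\Gamma,V)}$ as an algebraic product system over $\Omega$. On the other hand, Prop. \ref{structure over omega} furnishes an isomorphism $E|_{\Omega}\cong E^{(\alpha,\Gamma,V)}$. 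Composing these identifications produces an isomorphism $E|_{\Omega}\cong \widetilde{E}|_{\Omega}$.

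With (i) and (ii) in place, Corollary \ref{over omega implies over P1} immediately yields $E\cong\widetilde{E}$ as algebraic product systems over $P$. The main work, such as it is, has already been done in the preceding machinery: constructing the admissible cocycle extension $\widetilde{\Gamma}$ via Corollary \ref{extension third section}, upgrading $\alpha$ to $\widetilde{\alpha}$ via the algebraic cohomology extension of Remark \ref{equality of algebraic cocycles}, and establishing the structure theorem over $\Omega$ (Prop. \ref{structure over omega}). The only conceivable obstacle in this final step would be the compatibility of the algebraic-category extension (Corollary \ref{over omega implies over P1}) with the decomposable data on each side, but since the corollary is stated purely at the level of algebraic product systems with left coherent sections and does not require measurability on the nose, no further verification is needed.
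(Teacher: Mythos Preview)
Your proposal is correct and follows exactly the same approach as the paper's proof: both argue that $E$ and $\widetilde{E}$ possess left coherent sections, observe that they are isomorphic over $\Omega$ (via Prop.~\ref{structure over omega} and the fact that $(\widetilde{\alpha},\widetilde{\Gamma},\widetilde{V})$ restricts to $(\alpha,\Gamma,V)$ on $\Omega$), and then invoke Corollary~\ref{over omega implies over P1}. Your write-up simply makes the justifications for the two hypotheses more explicit than the paper's terse version.
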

 \begin{proof} We have already shown that $E$ and $\widetilde{E}$ are isomorphic over $\Omega$.
 Both $E$ and $\widetilde{E}$ possess left coherent sections. By Corollary \ref{over omega implies over P1}, $E$ and $\widetilde{E}$ are isomorphic over $P$, as algebraic product systems.
 \end{proof}

  We are now in a position to prove that the $e$-logarithm can be defined for the path space of $E$  over $P$. 
For $a\in P$, let $D(a)$ denote the set of decomposable vectors in $E(a)$. We identify vectors in $D(a)$ if they are scalar multiples of each other. For $u\in D(a)$, let $\dot{u}$ denote its equivalence class with respect to this equivalence relation, and denote the set of equivalence classes by $\Delta(a)$. Define \[\Delta:=\{(a,\dot{u}):\textrm{ $a\in P$ and $\dot{u} \in \Delta(a)$}\}.\]
Let \[\Delta^{(2)}:=\{(a,\dot{u},\dot{v}):\textrm{$a\in P$, $u,v\in D(a)$}\}.\] Let $f=\{f_{a}\}_{a\in P}$ be a left coherent section of $E$ of unit norm.
 
 \begin{theorem}\label{existence e log over P} There exists a unique  map $L^{f}:\Delta^{(2)}\to\mathbb{C}$ satisfying the following properties.
 \begin{enumerate}[(1)]
 \item For $a\in P$, $u,v\in D(a)$,\[e^{L^{f}(a,\dot{u},\dot{v})}=\frac{\langle u|v\rangle}{\langle u|f_{a}\rangle\langle f_{a}|v\rangle}.\]
 \item For left coherent sections $u=\{u_{a}\}_{a\in P}$ and $v=\{v_{a}\}_{a\in P}$ of decomposable vectors in $E$, the map
 \[P\ni a \to L^{f}(a,\dot{u}_{a},\dot{v}_{a})\in\mathbb{C}\] is continuous, and
 $\displaystyle \lim_{a \to 0}L^{f}(a,\dot{u}_{a},\dot{v}_{a})=0$.
 \item For $a\in P$, the map \[D(a)\times D(a)\ni (u,v)\to L^{f}(a,\dot{u},\dot{v}) \in \mathbb{C}\] is positive definite.
 \item For each $a\in P$, there exists a map $\psi_{a}:\Delta\to\mathbb{C}$ such that
 \[L^{f}(a+b, \dot{{u}_{1}}\dot{v_{1}}, \dot{{u}_{2}}\dot{v_{2}})= L^{f}(a,\dot{u}_{1},\dot{u}_{2})+ L^{f}(b,\dot{v}_{1},\dot{v}_{2})+\psi_{a}(b,\dot{v}_{1})+\overline{\psi_{a}(b,\dot{v}_{2})}\] for $b\in P$, $u_{i}\in D(a)$, and $v_{i}\in D(b)$, for $i=1,2$.
 \end{enumerate}
 \end{theorem}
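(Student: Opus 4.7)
The plan is to transport the problem via the algebraic isomorphism $E \cong \widetilde{E}$ of Proposition \ref{iso with algebraic} into the explicit CCR model $\widetilde{E}$, where by Remark \ref{same decom vectors} decomposable vectors in $\widetilde{E}(a)$ have the concrete form $\lambda e(\xi)$ with $\xi \in Ker(\widetilde{V}_a^*)$ and $\lambda \in \mathbb{C}\setminus\{0\}$. Under this isomorphism the unit-norm left coherent section $f$ transports to a section $f_a = \lambda_a e(\zeta_a)$ with $\zeta_a \in Ker(\widetilde{V}_a^*)$ and $|\lambda_a|^2 e^{\|\zeta_a\|^2} = 1$, and left coherence in $\widetilde{E}$ forces
\begin{equation*}
\zeta_{a+b} = \zeta_a + \widetilde{V}_a \chi_{a,b} + \widetilde{V}_a \widetilde{\Gamma}(a,b),
\end{equation*}
where $\chi_{a,b} \in Ker(\widetilde{V}_b^*)$ is the parameter of the propagator of $f$ from $a$ to $a+b$.

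With these ingredients available, I would simply \emph{define}
\begin{equation*}
L^f(a, \dot{u}, \dot{v}) := \langle \xi - \zeta_a \, | \, \eta - \zeta_a \rangle
\end{equation*}
for $u = \mu e(\xi)$ and $v = \nu e(\eta)$ in $\widetilde{D}(a)$. This is manifestly independent of the scalars $\mu,\nu$ and so descends to $\Delta^{(2)}$. Property (1) is a direct computation from $\langle e(\xi)|e(\eta)\rangle = e^{\langle \xi|\eta\rangle}$ together with the normalization of $\lambda_a$, and property (3) is immediate since $(\xi,\eta) \mapsto \langle \xi - \zeta_a|\eta - \zeta_a\rangle$ is a Gram-matrix kernel. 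For property (2), a left coherent section of decomposable vectors takes the form $u_b = \mu_b e(\xi_b^u)$, so continuity of $b \to L^f(b,\dot{u}_b,\dot{v}_b)$ reduces to continuity of the $\mathcal{H}$-valued sections $b \to \xi_b^u$ and $b \to \zeta_b$, while the vanishing at the origin follows from the fact that $Ker(\widetilde{V}_b^*) \to \{0\}$ as $b \to 0$.

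The real work is in property (4). I would expand $u_1 \cdot u_2$ in $\widetilde{E}$ via the product rule $e(\xi_1) \cdot e(\xi_2) = \widetilde{\alpha}(a,b)\, e(\xi_1) \odot W(\widetilde{\Gamma}(a,b)) e(\xi_2)$, observe that this is a scalar multiple of $e(\xi_1 + \widetilde{V}_a \xi_2 + \widetilde{V}_a \widetilde{\Gamma}(a,b))$, and then use the coherence relation above to split
\begin{equation*}
\xi_1 + \widetilde{V}_a \xi_2 + \widetilde{V}_a \widetilde{\Gamma}(a,b) - \zeta_{a+b} = (\xi_1 - \zeta_a) + \widetilde{V}_a(\xi_2 - \chi_{a,b}).
\end{equation*}
The orthogonality $Ker(\widetilde{V}_a^*) \perp \widetilde{V}_a \mathcal{H}$ annihilates the cross terms, yielding $L^f(a+b, \dot{u}_1 \dot{u}_2, \dot{v}_1 \dot{v}_2) = L^f(a, \dot{u}_1, \dot{v}_1) + \langle \xi_2 - \chi_{a,b}|\eta_2 - \chi_{a,b}\rangle$. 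The gap between the last term and $L^f(b, \dot{u}_2, \dot{v}_2) = \langle \xi_2 - \zeta_b|\eta_2 - \zeta_b\rangle$ is linear plus conjugate-linear in $(\xi_2,\eta_2)$, which lets me set $\psi_a(b, \dot{u}_2) := \langle \xi_2|\zeta_b - \chi_{a,b}\rangle + \tfrac{1}{2}(\|\chi_{a,b}\|^2 - \|\zeta_b\|^2)$ so that the discrepancy is $\psi_a(b, \dot{u}_2) + \overline{\psi_a(b, \dot{v}_2)}$, as required.

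For uniqueness, if $L_1$ and $L_2$ both satisfy (1)--(4), then $D := L_1 - L_2$ takes values in $2\pi i \mathbb{Z}$ by (1). Given any $(a, u, v) \in \Delta^{(2)}$, extend $u,v$ to left coherent sections of decomposable vectors using Prop. 3.3 of \cite{SUNDAR}; by (2) applied to each $L_i$, the map $b \to D(b, \dot{u}_b, \dot{v}_b)$ is continuous, $2\pi i \mathbb{Z}$-valued, and tends to $0$ at the origin, hence vanishes on all of $P$ by connectedness. The main obstacle I anticipate is the bookkeeping in (4): arranging the remainder so that $\psi_a$ genuinely depends only on $\dot{u}_2$ (and not on $\dot{v}_2$) requires faithful tracking of the normalization scalars coming from $W(\cdot)$, $\widetilde{\alpha}$, and the unit-norm condition, and it is the orthogonal decomposition $Ker(\widetilde{V}_{a+b}^*) = Ker(\widetilde{V}_a^*) \oplus \widetilde{V}_a Ker(\widetilde{V}_b^*)$ that makes this clean.
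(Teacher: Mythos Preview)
Your proposal is correct and follows essentially the same route as the paper: transfer to the model $\widetilde{E}$ via Prop.~\ref{iso with algebraic}, write $f_a=\lambda_a e(\zeta_a)$ and define $L^f(a,\dot u,\dot v)=\langle\xi-\zeta_a\,|\,\eta-\zeta_a\rangle$, with (2) coming from the coherence of the parameter sections (Remark~\ref{continuity of coherence}) and (4) from the orthogonal splitting $Ker(\widetilde V_{a+b}^{*})=Ker(\widetilde V_a^{*})\oplus \widetilde V_a Ker(\widetilde V_b^{*})$. Your $\psi_a$ differs from the paper's by a purely imaginary constant, which is harmless since such constants cancel in $\psi_a+\overline{\psi_a}$; the uniqueness argument is identical to the paper's.
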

 \begin{proof} Since the assertion is purely algebraic, we may assume, thanks to Prop. \ref{iso with algebraic}, that $E=\widetilde{E}=E^{(\widetilde{\alpha},\widetilde{\Gamma},\widetilde{V})}$. Let $\widetilde{\Delta}$ be the path space of $\widetilde{E}$ over $P$. For $a \in P$, let $\widetilde{D}(a)$ be the set of decomposable vectors of $\widetilde{E}(a)$. Suppose $f:=(f_a)_{a \in P}$ is a left coherent section of $\widetilde{E}$ of unit norm. 
 
By Remark \ref{same decom vectors}, there exist scalars $\{\lambda_{a}\}_{a\in P}$ and a family $\{\xi_{a}\}_{a \in P}$ such that for $a \in P$, $\xi_a \in Ker(\widetilde{V}_a^*)$ and $f_a=\lambda_ae(\xi_a)$. The fact that $(f_a)_{a \in P}$ is left coherent implies that 
 if $a\leq b$, $\xi_{a}=(1-\widetilde{V}_{a}\widetilde{V}_{a}^{*})\xi_{b}$. The fact that $||f_a||=1$ implies that $|\lambda_a|=e^{-\frac{||\xi_a||^{2}}{2}}$ for $a \in P$. For $a, b \in P$, let $\xi(a,a+b)\in Ker(\widetilde{V}_{b}^{*})$ be such that \[\xi_{a+b}=\xi_{a}+\widetilde{V}_{a}(\widetilde{\Gamma}(a,b)+\xi(a,a+b)).\]

  Let $a\in P$, and let $u,v\in \widetilde{D}(a)$. There exist scalars $\lambda,\mu$ and $\xi,\eta\in Ker(\widetilde{V}_{a}^{*})$ such that $u=\lambda e(\xi)$ and $v=\mu e(\eta)$ respectively.
 We define \[L^{f}(a,\dot{u},\dot{v})=\langle \xi-\xi_{a}|\eta-\xi_{a}\rangle.\]
 It is clear that $L^{f}$ is well-defined, and satisfies $(1)$ and $(3)$. 

 Suppose $u=\{u_{a}\}_{a\in P}$ and $v=\{v_{a}\}_{a\in P}$  are two left coherent sections in $\widetilde{E}$, where $u_{a}=\delta_{a}e(\zeta_{a})$ and $v_{a}=\mu_{a}e(\eta_{a})$ for $a\in P$. Then, for $a\leq b$, $\zeta_{a}=(1-\widetilde{V}_{a}\widetilde{V}_{a}^{*})\zeta_{b}$ and $\eta_{a}=(1-\widetilde{V}_{a}\widetilde{V}_{a}^{*})\eta_{b}$.  By Remark \ref{continuity of coherence}, the map $P \ni a \to (\zeta_a,\eta_a,\xi_a) \in \mathcal{H}\times \mathcal{H} \times \mathcal{H}$ is continuous and $(\zeta_a,\eta_a,\xi_a) \to (0,0,0)$ as $a \to 0$. Now the second assertion is clear. 

Given $b\in P$, and $u\in D(b)$, there exists a left coherent section $\{u_{a}\}_{a\in P}$ in $\widetilde{E}$ such that $u=u_{b}$. Now using $(1)$ and $(2)$, the uniqueness of $L^{f}$ can be deduced.

Fix $a\in P$. We define $\psi:\Delta\to\mathbb{C}$ by
\[\psi_{a}(b,\mu e(\eta))=\langle\eta-\xi_{b}|\xi_{b}-\xi(a,a+b)\rangle+\frac{||\xi_{b}-\xi(a,a+b)||^{2}}{2}\] for $b\in P$.
 
Let $a,b\in P$. Let $u_{i}=\lambda_{i}e(\zeta_{i})$, $v_{i}=\mu_{i}e(\eta_{i})$, where $\zeta_{i}\in Ker(\widetilde{V}_{a}^{*})$ and $\eta_{i}\in Ker(\widetilde{V}_{b}^{*})$ for $i=1,2$. It is routine to verify that
\[L^{f}(a+b, \dot{{u}_{1}}\dot{v_{1}}, \dot{{u}_{2}}\dot{v_{2}})= L^{f}(a,\dot{u}_{1},\dot{u}_{2})+ L^{f}(b,\dot{v}_{1},\dot{v}_{2})+\psi_{a}(b,\dot{v}_{1})+\overline{\psi_{a}(b,\dot{v}_{2})}.\]
Now the proof is complete.
 \end{proof}

With the $e$-logarithm constructed for the path space of $E$ over $P$, we  construct a separable Hilbert space $\mathcal{H}$, and define a pure(strongly continuous) isometric representation  of $P$ on $\mathcal{H}$ just as we have done before. Simply replacing $\Omega$ by $P$ in all the steps required  to get the structure of $E|_{\Omega}$, we arrive at
the following structure theorem for a generic product system over $P$. We leave  
the verification that the entire proof works over $P$ to the reader.

\begin{theorem}\label{structure over P} Let $E$ be a decomposable product system over $P$, and let $V$ be 
the associated isometric representation of $E$ acting on $\mathcal{H}$. Then, there exist $\Gamma\in Z_{a}^{2}(P,\mathcal{H},V)$ and a Borel map $\alpha:P\times P\to\mathbb{T}$ satisfying \[e^{iIm\langle \Gamma(a,b+c)|V_{b}\Gamma(b,c)\rangle}=\frac{\alpha(a,b)\alpha(a+b,c)}{\alpha(a,b+c)\alpha(b,c)}\] for $a,b,c\in P$, such that the product systems $E$ and $E^{(\alpha,\Gamma,V)}$ are isomorphic.
\end{theorem}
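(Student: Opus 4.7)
The plan is to run the entire construction of Proposition \ref{structure over omega} with $\Omega$ replaced by $P$, using Theorem \ref{existence e log over P} to supply the e-logarithm over the full cone. This is possible precisely because the only place $\Omega$ was used before was to guarantee a well-defined e-logarithm with its four characterizing properties; now that Theorem \ref{existence e log over P} delivers the same object on all of $P$, no intermediate extension step (such as Prop. \ref{iso with algebraic} combined with extending $V$, $\widetilde{\Gamma}$, $\widetilde{\alpha}$) is needed any more.

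Concretely, I would first fix a left coherent section $e = \{e_a\}_{a \in P}$ of decomposable unit vectors that is measurable on $\Omega$, whose existence is guaranteed by Proposition \ref{existencemeasurablesection}, and invoke Theorem \ref{existence e log over P} to obtain the $e$-logarithm $L^e$ on $\Delta^{(2)}$ over $P$. For each $a \in P$, form the pre-Hilbert space $\mathbb{C}_0 \Delta(a)$ with semi-definite inner product
\begin{equation*}
\langle f | g\rangle = \sum_{u,v \in \Delta(a)} f(u)\overline{g(v)}\, L^e(a,\dot{u},\dot{v}),
\end{equation*}
pass to the quotient Hilbert space $H_a$, embed $H_a$ into $H_b$ for $a \leq b$ via $[u_1]-[u_2] \mapsto [u_1 v] - [u_2 v]$ where $v \in D(b-a)$, and take the inductive limit to get a separable Hilbert space $\mathcal{H}$. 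Define $V_a$ on $\mathcal{H}$ by $V_a([u_1]-[u_2]) = [vu_1]-[vu_2]$ for any $v \in D(a)$. Purity is automatic; strong continuity on all of $P$ follows from property (2) of $L^e$ in Theorem \ref{existence e log over P} (the continuity and limit as $a \to 0$ of $L^e(a,\dot{u}_a,\dot{v}_a)$ along left coherent sections).

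Having $V = \{V_a\}_{a \in P}$ in hand, set
\begin{equation*}
\Gamma(a,b) = [e_b] - [e(a,a+b)], \qquad \alpha(a,b) = \frac{\langle e_b | e(a,a+b)\rangle}{|\langle e_b | e(a,a+b)\rangle|}
\end{equation*}
for $a,b \in P$. The cocycle computation from the lemma preceding Proposition \ref{structure over omega} is purely algebraic, uses only the associativity of the product and left coherence of $e$, and hence goes through verbatim to show $\Gamma \in Z^2(P,\mathcal{H})$; the same telescoping calculation shows $\alpha$ satisfies the admissibility identity, so $\Gamma \in Z_a^2(P,\mathcal{H},V)$. Finally, define $\Psi : E \to E^{(\alpha,\Gamma,V)}$ on decomposables by $\Psi_a(u) = \langle u|e_a\rangle \, e([u]-[e_a])$ and extend by density. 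The inner product computation, multiplicativity $\Psi_{a+b}(uv) = \Psi_a(u) \Psi_b(v)$, and totality of $\Psi_a(D(a))$ in the fiber of $E^{(\alpha,\Gamma,V)}$ are again purely algebraic and transfer word for word.

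The genuinely new point needing checking is measurability of $\Psi$, now on $P \times P$ rather than on $\Omega \times \Omega$. For this I would reuse the argument at the end of Proposition \ref{structure over omega}: thanks to the measurable version of Remark \ref{existencemeasurablesection1} (which extends to $P$ by invoking Proposition \ref{over omega implies over P} or simply by restricting coherent sections), it suffices to check that for a measurable left coherent section $(u_a)_{a \in P}$ of decomposables, the map $a \mapsto [u_a] - [e_a] \in \mathcal{H}$ is Borel, which is verified by localizing on the open cover $\{(0, nb_0)\}_n$ for a fixed $b_0 \in \Omega$ and applying the splitting formula for $L^e$ (property (4) of Theorem \ref{existence e log over P}) to reduce to the measurability of $a \mapsto L^e(a,\dot{u}_a,\dot{v}_a)$, guaranteed by property (2). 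The main obstacle I expect is precisely this continuity/measurability on the boundary $\partial P$: the inductive limit argument giving $\mathcal{H}$ is cofinal using interior points, but one must confirm that the isometries $V_a$ attached to boundary $a \in P \setminus \Omega$ vary strongly continuously and that the propagator map $(a,b) \mapsto e(a,a+b)$ gives Borel dependence of $\Gamma$ and $\alpha$ right up to the boundary; both are consequences of property (2) of Theorem \ref{existence e log over P}, but this is the point at which the two-parameter structure is doing real work beyond the $\Omega$ case.
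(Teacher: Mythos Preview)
Your proposal is correct and follows exactly the paper's own approach: after Theorem \ref{existence e log over P} supplies the $e$-logarithm over all of $P$, the paper simply instructs the reader to rerun the construction leading to Proposition \ref{structure over omega} with $\Omega$ replaced by $P$, leaving the verification that every step carries over (including the measurability arguments you flag) to the reader. Your write-up is in fact more detailed than the paper's, which dispatches the proof in a single paragraph.
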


\begin{remark}
Given a decomposable product system $E$ over $P$, a priori, we are unable to construct the $e$-logarithm for the path space of $E$ over $P$. It was only constructed for the path space over $\Omega$. 
              Then, an entire machinery needs to be developed to get the structure of $E|_{\Omega}$ from which it follows that $E$ is algebraically isomorphic to an algebraic product system
               for which the construction of $e$-logarithm is easy. Is it possible to directly construct the $e$-logarithm for the path space of $E$ over $P$ without going 
               through the intermediate step of deriving the structure of $E|_{\Omega}$? This will avoid repeating the analysis twice, once for $E|_{\Omega}$, and then for $E|_{P}$. 
               Note that this subtlety does not arise in the $1$-parameter situation, i.e. when $P=[0,\infty)$. 
\end{remark}

We point out another pedantic subtlety that might be of interest to the reader
in the following remark.
\begin{remark}
\label{pedantic remark}
First, note that the notion of $E_0$-semigroups, product systems and decomposable product systems make perfect sense
               for semigroups (at least abelian) other than cones. For the reader's convenience, let us define what we mean by a decomposable vector.  Let $G$ be an abelian group,
               and let $Q \subset G$ be a subsemigroup, not necessarily containing the origin. For $x,y \in G$, write $x \leq y$ if $y-x \in Q$.  Let $E:=\{E(a)\}_{a \in Q}$ be a product
               system over $Q$. A non-zero vector $u \in E(a)$ is said to be decomposable if given $b \leq a$, there exist $v \in E(b)$ and $w \in E(a-b)$ such that $u=vw$. 
               It is now clear what a decomposable product system over $Q$ means. An $E_0$-semigroup over $Q$ is said to be decomposable if the associated product system
               is decomposable. Fix an infinite dimensional, separable Hilbert space $\mathcal{H}$. The set of $E_0$-semigroups over $Q$ acting on $B(\mathcal{H})$ will be denoted by $\mathcal{E}(Q,\mathcal{H})$,
               and the set of decomposable $E_0$-semigroups over $Q$ on $B(\mathcal{H})$ will be denoted by $\mathcal{D}(Q,\mathcal{H})$. 
               
 Why do we care so much about whether the indexing semigroup is $P$ or $\Omega$? 
In the $1$-parameter theory of $E_0$-semigroups, the indexing semigroup is either $(0,\infty)$ or $[0,\infty)$, and it is clear from the start that there is no difference whether we choose 
to work with the closed half-line or the open half-line. %This is not so obvious in the multiparameter case. 

As far as the multiparameter case is concerned this is not so clear. For example, when one wishes to study two parameter theory of $E_0$-semigroups,
two obvious choices of the indexing semigroup present themselves. One is the closed quarter plane $[0,\infty)\times [0,\infty)$, and the other is the open quarter plane $(0,\infty)\times (0,\infty)$. (The closed quarter plane is better for aesthetic reasons as it allows us
to think of a $2$-parameter $E_0$-semigroup as two $1$-parameter semigroups that satisfy a commutation relation.) Both the choices are perfectly
reasonable, and it is not immediately obvious that both choices lead to the same two parameter theory of $E_0$-semigroups. Thus, it is of some importance to establish  that the two theories are consistent. In fact, we have that the set of $E_0$-semigroups over $[0,\infty)\times [0,\infty)$ and 
the set of $E_0$-semigroups over $(0,\infty)\times (0,\infty)$ are in bijective correspondence. 

More generally, suppose $P$ is a closed convex, spanning, pointed cone in $\bbr^d$, and suppose $\Omega$ is its interior. Then, the map, denoted $\Phi$,
\[
\mathcal{E}(P,\mathcal{H}) \ni \beta \to \beta|_{\Omega} \in \mathcal{E}(\Omega,\mathcal{H})\]
is a bijection. Moreover, $\Phi$ and $\Phi^{-1}$ preserve cocycle conjugacy. Although, this is not hard to prove, this fact is not completely obvious as in the $1$-parameter case.
A proof, for a more general semigroup, can be found in \cite{Murugan_measurable}.

Is $\Phi(\mathcal{D}(P,\mathcal{H}))=\mathcal{D}(\Omega,\mathcal{H})$? Once again,  answer to this question is completely trivial  in the $1$-parameter case as we do not see any marked difference
between the order $\leq$ and the order $<$. In the higher dimensional case, we do not know of any elementary argument that shows that $\Phi(\mathcal{D}(P,\mathcal{H}))=\mathcal{D}(\Omega,\mathcal{H})$.
The only proof that the authors know is to build the entire technical machinery required to  derive the structure of a typical element of $\mathcal{D}(P,\mathcal{H})$ and the structure of a typical element of $\mathcal{D}(\Omega,\mathcal{H})$ independently,
and compare them. A proof is sketched below.   

Let $\beta \in \mathcal{D}(P,\mathcal{H})$ be given, and let $E_\beta$ be the associated product system over $P$. Then, $E^{\beta} \cong E^{(\alpha,\Gamma,V)}$ for some triple $(\alpha,\Gamma,V)$ that is defined over $P$. It is routine to prove, as in Prop. \ref{->},  that $E^{(\alpha,\Gamma,V)}|_{\Omega} \in \mathcal{D}(\Omega,\mathcal{H})$. 

Conversely, let $\beta \in \mathcal{D}(\Omega,\mathcal{H})$ be given, and let $\widetilde{\beta} \in \mathcal{E}(P,\mathcal{H})$ be such that $\widetilde{\beta}|_{\Omega}=\beta$. Let $E_{\beta}$ be the product system of $\beta$ over $\Omega$,
and let $E_{\widetilde{\beta}}$ be the product system of $\widetilde{\beta}$ over $P$. First, we construct the $e$-logarithm for the path space of $E_{\beta}$ over $\Omega$ (note here that
the order we use is $<$). This can be achieved by imitating the techniques of \cite{SUNDAR}. 

Once the $e$-logarithm is constructed, we can argue as we have done so far in this paper to prove that $E_{\beta}$ is isomorphic to $E^{(\alpha,\Gamma,V)}$, 
where the triple $(\alpha,\Gamma,V)$ is defined only over $\Omega$. Now, extend the triple $(\alpha,\Gamma,V)$, algebraically, to $(\widetilde{\alpha},\widetilde{\Gamma},\widetilde{V})$ over $P$, and consider the algebraic product system $E^{(\widetilde{\alpha},\widetilde{\Gamma},\widetilde{V})}$ over $P$. The product system $E_{\widetilde{\beta}}$ is then algebraically isomorphic to $E^{(\widetilde{\alpha},\widetilde{\Gamma},\widetilde{V})}$.  Using the fact
that $E_{\widetilde{\beta}}$ and $E^{(\widetilde{\alpha},\widetilde{\Gamma},\widetilde{V})}$ are algebraically isomorphic, and observing that the notion of decomposability is purely algebraic, it is routine to show, as in Prop. \ref{->}, that $E_{\widetilde{\beta}} \in \mathcal{D}(P,\mathcal{H})$. Hence, $\Phi(\mathcal{D}(P,\mathcal{H}))=\mathcal{D}(\Omega,\mathcal{H})$. 

Is there an elementary argument, i.e from first principles, that establishes the equality $\Phi(\mathcal{D}(P,\mathcal{H}))=\mathcal{D}(\Omega,\mathcal{H})$? Of course, these are pedantic issues.
   \end{remark}           

For $i=1,2$, let $V^{(i)}$ be a pure isometric representation of $P$ on a separable Hilbert space $\mathcal{H}_{i}$. Let $\Gamma_{i}\in Z_{a}^{2}(P,\mathcal{H}_{i})$ and let $\alpha_{i}:P \times P \to \mathbb{T}$ be  a map satisfying the admissibility condition \[e^{iIm\langle \Gamma_{i}(a,b+c)|V^{(i)}_{b}\Gamma_{i}(b,c)\rangle}=\frac{\alpha_{i}(a,b)\alpha_{i}(a+b,c)}{\alpha_{i}(a,b+c)\alpha_{i}(b,c)}\]
for $a,b,c\in P$, and for $i=1,2$. 

\begin{theorem}\label{projectively isomorphic product systems} Keep the foregoing notation. 
 The product system $E^{(\alpha_{1},\Gamma_{1},V^{(1)})}$ is projectively isomorphic to $E^{(\alpha_{2},\Gamma_{2},V^{(2)})}$ if and only if there exists a unitary $U:\mathcal{H}_{1}\to\mathcal{H}_{2}$  such that \begin{enumerate}[(1)]
 \item $V_a^{(2)}=UV_{a}^{(1)}U^{*}$ for $a\in P$, and
 \item $\Gamma_{2}-U\Gamma_{1}$ is a coboundary.
 \end{enumerate}
 \end{theorem}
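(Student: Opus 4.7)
The plan is to prove sufficiency by an explicit construction of a projective isomorphism, and to prove necessity by extracting the unitary $U$ from the intrinsic structure of the product system and analyzing the form of the isomorphism on exponential vectors.

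For sufficiency, I would start from a unitary $U:\mathcal{H}_1 \to \mathcal{H}_2$ intertwining $V^{(1)}$ and $V^{(2)}$, together with a measurable coherent map $\xi:P\to\mathcal{H}_2$ witnessing that $\Gamma_2 - U\Gamma_1$ is a coboundary. The intertwining property forces $U$ to restrict to unitaries $U_a:Ker(V^{(1)*}_a) \to Ker(V^{(2)*}_a)$; let $\widehat{U}_a$ denote the induced unitary between the symmetric Fock spaces. The candidate map is
\begin{equation*}
\theta_a(u):= W(\xi_a)\widehat{U}_a u.
\end{equation*}
To verify this is a projective isomorphism, I would expand both sides of the multiplicativity equation on pairs of exponential vectors using the product rule~\eqref{multiplication} and the Weyl commutation relations, and observe that the base-point arguments of the resulting exponential vectors match exactly by virtue of the intertwining $V^{(2)}_a U = UV^{(1)}_a$ together with the coboundary identity. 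The remaining scalar mismatch defines a Borel map $\omega:P\times P\to\mathbb{T}$, and associativity of $\theta$ upgrades $\omega$ to a multiplier. This is essentially a generalisation of the transport construction in Proposition~\ref{admissibility of cohomologous cocycles}.

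For necessity, let $\theta$ be a projective isomorphism with multiplier $\omega$. Since the associated isometric representation $V^E$ is an invariant of projective isomorphism (it is built intrinsically from the path space of decomposable vectors), I obtain a unitary $U:\mathcal{H}_1\to\mathcal{H}_2$ with $V^{(2)} = UV^{(1)}U^*$. By Remark~\ref{same decom vectors}, the decomposable vectors in $E^{(\alpha_i,\Gamma_i,V^{(i)})}(a)$ are exactly the non-zero scalar multiples of exponential vectors $e(\zeta)$ with $\zeta \in Ker(V^{(i)*}_a)$, and $\theta_a$ must preserve this class. Writing $\theta_a(e(\zeta)) = c_a(\zeta)\,e(\phi_a(\zeta))$, I would use the unitarity of $\theta_a$ together with the Gaussian identity $\langle e(\zeta)|e(\eta)\rangle = e^{\langle\zeta,\eta\rangle}$ to force $\phi_a$ to be affine, of the form $\phi_a(\zeta) = U'_a\zeta + \xi_a$ with $U'_a$ unitary and $\xi_a \in Ker(V^{(2)*}_a)$, and to pin down $c_a(\zeta)$ up to a global phase by the Weyl normalisation. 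Imposing multiplicativity on pairs of exponential vectors and comparing base points then makes every term in $\eta_1,\eta_2$ cancel by the intertwining of $U$, leaving exactly
\begin{equation*}
V^{(2)}_a\bigl(\Gamma_2(a,b) - U\Gamma_1(a,b)\bigr) = \xi_{a+b} - \xi_a - V^{(2)}_a\xi_b,
\end{equation*}
which is precisely the coboundary relation sought. Measurability of $\xi$ descends from the Borel structure of $\theta$ together with coherence (Remark~\ref{continuity of coherence}).

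The hard part, I expect, is the identification $U'_a = U|_{Ker(V^{(1)*}_a)}$ inside the necessity argument: any competing unitary $U'_a$ differing from $U$ on a fibre would a priori be allowed by the affine-form conclusion alone. Ruling this out requires tracing how the equivalence classes $[u]-[v]$ entering the inductive-limit construction of $V^E$ are transported by $\theta$, and matching that transport with the action of $U$ extracted from the path-space invariant. Once this is in place, everything else reduces to routine Weyl-operator computation combined with Borel-multiplier bookkeeping.
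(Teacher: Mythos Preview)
Your sufficiency argument is exactly the paper's: define $\Lambda_a = W(\xi_a)\Gamma(U)$, push the product through, and read off the multiplier from the scalar discrepancy.

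For necessity your plan works, but it is more circuitous than the paper's and manufactures a difficulty the paper never faces. You propose to first extract $U$ from the path-space invariant $V^E$, then separately write $\theta_a$ in affine form $\zeta\mapsto U'_a\zeta+\xi_a$, and finally reconcile $U'_a$ with $U$. The paper bypasses the path-space invariant entirely: it writes each $\Psi_a$ (for $a\in\Omega$, where both kernels are infinite-dimensional) as $e^{i\lambda_a}W(\xi_a)\Gamma(U_a)$ via Guichardet's lemma, and then the multiplicativity relation on exponential vectors itself forces $U_{a+b}\zeta = U_a\zeta$ for $\zeta\in Ker(V_a^{(1)*})$, so the local $U_a$'s patch into a single $U$. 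The intertwining $V^{(2)}_a = UV^{(1)}_aU^*$ and the coboundary identity then drop out of the same equation by setting first $\zeta=\eta=0$, then $\zeta=0$. Thus what you flag as the ``hard part'' simply does not arise: there is only one $U$ in the argument, built directly from the fibrewise data. The passage from $\Omega$ to $P$ is handled at the end by continuity and Corollary~\ref{extension third section}; you should mention this restriction to $\Omega$, since the affine-form step needs the kernels to have equal (infinite) dimension.
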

 \begin{proof} For simplicity, write $E^{(\alpha_{i},\Gamma_{i},V^{(i)})}=E_{i}$ for $i=1,2$. Suppose $E_{1}$ and $E_{2}$ are projectively isomorphic. For $a \in P$, denote the set of decomposable vectors of $E_i(a)$ by $D_i(a)$.
 Let $\Psi:E_1\to E_2$ be a projective isomorphism. Suppose $\omega:P\times P\to\mathbb{T}$ is a multiplier such that
 \begin{equation*}
\Psi_{a+b}=\omega(a,b)\Psi_{a}\Psi_{b}
\end{equation*} for $a,b\in P$.
Fix $a \in P$. Then, $\Psi_a(D_1(a))=D_2(a)$. Recall that \[D_i(a)=\{\lambda e(\xi): \lambda \in \mathbb{C}\setminus \{0\}, \xi \in Ker(V_a^{(i)*})\}.\]

Let $a\in\Omega$. Then, the Hilbert spaces $Ker(V^{(i)*}_{a})$ have the same dimension for $i=1,2$, since they are both separable and infinite dimensional. By Lemma 2.1 of \cite{guichardetsymmetric}, there exists a unitary $U_{a}:Ker(V^{(1)*}_{a})\to Ker(V^{(2)*}_{a})$,  a scalar $\lambda_{a}\in\R$, and $\xi_{a}\in Ker(V^{(2)*}_a)$   such that
 \begin{equation*}
 \Psi_{a}=e^{i \lambda_{a}} W(\xi_{a})\Gamma(U_{a}).
 \end{equation*} 

Using the identity
\begin{equation*}%\label{proj. iso.}
\Psi_{a+b}(e(\xi)e(\eta))=\omega(a,b)\Psi_{a}(e(\xi))\Psi_{b}(e(\eta)),
\end{equation*}  observe that 
\begin{align}\label{proj iso condition}
U_{a}\xi+\xi_{a}+V^{(2)}_{a}(\Gamma_{2}(a,b)+U_{b}\eta+\xi_{b})=U_{a+b}(\xi+V^{(1)}_{a}(\Gamma_{1}(a,b)+\eta))+\xi_{a+b}
\end{align}for $a,b\in \Omega$, $\xi\in Ker(V^{(1)*}_{a})$, and $\eta\in Ker((V^{(1)*}_{b})$. Set $\xi=\eta=0$ in Eq. \ref{proj iso condition}. Then, \begin{equation}
\label{projiso1}V^{(2)}_{a}\Gamma_{2}(a,b)+\xi_{a}+V^{(2)}_{a}\xi_{b}=U_{a+b}V^{(1)}_{a}\Gamma_{1}(a,b)+\xi_{a+b}
\end{equation}for $a,b\in \Omega$.  Setting $\eta=0$ in Eq. \ref{proj iso condition}, and using Eq. \ref{projiso1}, we see that for $a,b \in \Omega$ and $\xi \in Ker(V^{(1)*}_a)$, $U_{a+b}\xi=U_{a}\xi$. Hence, the unitaries $\{U_{a}\}_{a \in \Omega}$ patch up to give a single unitary $U:\mathcal{H}_{1}\to\mathcal{H}_{2}$.

Now, 
\begin{align}\label{proj iso condition2}
U\xi+\xi_{a}+V^{(2)}_{a}(\Gamma_{2}(a,b)+U\eta+\xi_{b})=U(\xi+V^{(1)}_{a}(\Gamma_{1}(a,b)+\eta))+\xi_{a+b}
\end{align}for $a,b\in \Omega$, $\xi\in Ker(V^{(1)*}_{a})$, and $\eta\in Ker(V^{(1)*}_{b})$. 
Setting $\xi=0$ in Eq. \ref{proj iso condition2}, and using Eq. \ref{projiso1}, we see that $U$ intertwines $V^{(1)}$ and $V^{(2)}$, i.e. $V^{(2)}_{a}=UV^{(1)}_{a}U^{*}$ for $a\in \Omega$. Using the strong continuity of $V^{(1)}$ and $V^{(2)}$, we may conclude that $V^{(2)}_{a}=UV^{(1)}_{a}U^{*}$ for $a\in P$.

Again, setting $\xi=\eta=0$ in Eq. \ref{proj iso condition2}, we deduce that
\[V^{(2)}_{a}\Gamma_{2}(a,b)-V_{a}^{(2)}U\Gamma_{1}(a,b)=\xi_{a+b}-\xi_{a}-V^{(2)}_{a}\xi_{b}\] for $a,b\in \Omega$. 

From the above equation, it is clear that $\xi_{a}=(1-V^{(2)}_{a}V_{a}^{(2)*})\xi_{a+b}$, i.e the vectors $\{\xi_{a}\}_{a\in \Omega}$ are coherent. By Remark \ref{continuity of coherence},  the map $\Omega\ni a\to\xi_{a}\in\mathcal{H}_{2}$ is measurable. Thus, when restricted to $\Omega$, $\Gamma_{2}-U\Gamma_{1}\in B^{2}(\Omega, \mathcal{H}_{2})$. By Corollary \ref{extension third section}, $\Gamma_{2}-U\Gamma_{1}\in B^{2}(P, \mathcal{H}_{2})$.

Conversely, suppose there exists a unitary $U:\mathcal{H}_{1}\to\mathcal{H}_{2}$  intertwining $V^{(1)}$ and  $V^{(2)}$ such that
 $\Gamma_{2}-U\Gamma_{1}$ is a coboundary. Let $\xi:P\to \mathcal{H}_{2}$ be a measurable map such that 
\begin{equation*}
V^{(2)}_{a}(\Gamma_{2}-U\Gamma_{1})(a,b)=\xi_{a+b}-\xi_{a}-V_{a}\xi_{b}
\end{equation*} for $a,b\in P$.

The operator $\Lambda_{a}:=W(\xi_{a})\Gamma(U)$ defines a unitary between the Hilbert spaces $E_{1}(a)$ and $E_{2}(a)$ for each $a\in P$. The maps $(\Lambda_{a})_{a\in P}$ induce a product structure on $\displaystyle{\coprod_{a\in P}}E_{1}(a)$ given by 
\[
u \circ v=\Lambda_{a+b}^{-1}(\Lambda_a(u)\Lambda_b(v)).\] Let $\beta:P\times P\to\bbt$ be defined by
\[\beta(a,b)=\alpha_{2}(a,b)e^{iIm(\langle\xi_{a+b}|\xi_{a}+V^{(2)}_{a}\Gamma_{2}(a,b)+\xi_{b}\rangle+\langle\Gamma_{2}(a,b)|\xi_{b}\rangle)}\] for $a,b\in P$.
Then, the multiplication rule $\circ$ is given by 
\begin{equation*}
e(\xi)\circ e(\eta)=\beta(a,b) e(\xi)\odot W(\Gamma_{1}(a,b))e(\eta)
\end{equation*} for $a,b\in P$, $\xi \in Ker(V^{(1)*}_a)$, and $\eta \in Ker(V^{(1)*}_b)$.  The associativity of the product $\circ$ implies that  \[e^{iIm\langle \Gamma_{1}(a,b+c)|V^{(1)}_{b}\Gamma_{1}(b,c)\rangle}=\frac{\beta(a,b)\beta(a+b,c)}{\beta(a,b+c)\beta(b,c)}\] for $a,b,c\in P$.

Thus, the map $\Lambda=(\Lambda_a)_{a\in P}$ is an isomorphism between $E^{(\beta,\Gamma_1,V^{(1)})}$ and $E^{(\alpha_2,\Gamma_2,V^{(2)})}$. Since $\alpha_1$ and $\beta$ are cohomologous, $E^{(\alpha_1,\Gamma_1,V^{(1)})}$ and $E^{(\alpha_2,\Gamma_2,V^{(2)})}$ are projectively isomorphic. 
 \end{proof} 

Thm. \ref{structure over P} and Thm. \ref{projectively isomorphic product systems}
can be succinctly expressed as an equality. For a pure isometric representation $V$
of $P$ on a Hilbert space $\mathcal{H}$, let $\mathcal{D}_V(P)$ be the collection (up to projective isomorphism) of
decomposable product systems whose associated isometric representation is $V$.
Then,
\begin{equation}
\label{succinct}
\mathcal{D}_V(P)=\frac{H^{2}_{a}(P,\mathcal{H})}{\mathcal{U}(M_V)}=\frac{H^1_{a}(P,\mathcal{L}_V)}{\mathcal{U}(M_V)}.
\end{equation}
Here, $M_V:=\{V_x,V_{x}^{*}:x \in P\}^{'}$, and $\mathcal{U}(M_V)$ denotes the unitary group of the von Neumann algebra $M_V$. 
Note that $\mathcal{U}(M_V)$ acts naturally on $H^2(P,\mathcal{H})$, and leaves $H_{a}^{2}(P,\mathcal{H})$ invariant. 
Similarly, $\mathcal{U}(M_V)$ acts naturally on $H^1(P,\mathcal{L}_V)$, and leaves $H^1_{a}(P,\mathcal{L}_V)$ invariant. Also, the isomorphism $H^2(P,\mathcal{H}) \cong H^1(P,\mathcal{L}_V)$ is $\mathcal{U}(M_V)$ equivariant.

\begin{remark}\label{condition for isomorphism} 
Arguing as in Thm. \ref{projectively isomorphic product systems}, we can prove that the following are equivalent.
\begin{enumerate}
\item[(1)] The product systems $E^{(\alpha_1,\Gamma_1,V^{(1)})}$ and $E^{(\alpha_2,\Gamma_2,V^{(2)})}$ are isomorphic.
\item[(2)] There exist a unitary $U:\mathcal{H}_{1}\to\mathcal{H}_{2}$, a measurable map $P\ni a\to \xi_{a}\in\mathcal{H}_{2}$ such that $\xi_{a}\in Ker(V^{(2)*}_{a})$ for each $a\in P$, and a Borel map $P\ni a\to \lambda_a \in \mathbb{T}$ such that
\begin{enumerate}[(a)]
\item for $a\in P$, $UV^{(1)}_{a}U^{*}=V^{(2)}_{a}$,
\item for $a,b\in P$, $V^{(2)}_{a}(\Gamma_{2}(a,b)-U\Gamma_{1}(a,b))=\xi_{a+b}-\xi_{a}-V^{(2)}_{a}\xi_{b}$, and
\item for $a,b\in P$, \[e^{iIm(\langle\xi_{a+b}|UV^{(1)}_{a}\Gamma_{1}(a,b)\rangle-\langle\Gamma_{2}(a,b)|\xi_{b}\rangle)}=\frac{\lambda_{a}\lambda_{b}}{\lambda_{a+b}}\frac{\alpha_{2}(a,b)}{\alpha_{1}(a,b)}.\] 
\end{enumerate}
Here, $\mathcal{H}_i$ is the Hilbert space on which $V^{(i)}$ acts, for $i=1,2$.
\end{enumerate}

Since we do not  know  how Condition $(c)$ is to be interpreted in the language of cohomology, we have chosen to work with   projective isomorphism classes rather than isomorphism classes.

\end{remark}

\section{Computation of cocycles}
In this section, we compute the space of cocycles and the space of admissible cocycles for the shift semigroup associated to a transitive action of $P$. 
We need to work in a slightly general setting to get a few initial results. Thus, let $G$ be a locally compact, Hausdorff, second countable,  abelian group.
Let $\Omega$ be an  open subsemigroup of $G$. We assume that $ \Omega$ is spanning, i.e. $\Omega-\Omega=G$ and $0\in\overline{\Omega}$. Suppose $x,y\in G$. We write $x<y$ if $y-x\in\Omega$. 

 Let $(X,\mu)$ be a $\sigma$-finite measure space.
 Suppose $G$ acts on $X$ measurably. We denote the action by $+$. Assume that $\mu$ is $G$-invariant. Let $A$ be a non empty measurable subset of $X$.  Assume also that $A\neq X$. The set $A$  is called an $\Omega$-space if $A+\Omega\subset A$. 
Let $(X,\mu)$ be a $G$-space, and let $A\subset X$ be an $\Omega$-space such that $\displaystyle \bigcap_{b\in\Omega}(A+b)=\emptyset$. 

Define
\begin{equation*}\mathcal{L}_{A}:=\{f:A\to\mathbb{C} :\textrm{ $f$ is Borel, and for each $b\in \Omega$, $ \int_{A\backslash A+b}|f(x)|^{2}dx<\infty$}\}.
\end{equation*} 
As usual, we identify two functions if they are equal almost everywhere. 
 Let $b\in\Omega$. Define a seminorm $||.||_{b}$ on $\mathcal{L}_{A}$ by
\begin{equation*}||f||_{b}:= \Big(\int_{A\backslash A+b} |f(x)|^{2}dx\Big)^{\frac{1}{2}}.
\end{equation*}
The collection of  seminorms $\{||.||_{b}\}_{b\in\Omega}$ defines a locally convex topology  $\tau$ on $\mathcal{L}_{A}$, and it is not difficult to prove that $\mathcal{L}_A$ is a Frechet space. 

\begin{definition}Let $u :\Omega\to\mathcal{L}_{A}$ be a map. We call $u$ a $1$-cocycle if
\begin{enumerate}[(1)]
\item for $a,b,\in \Omega$,
 $ u_{a+b}(x)=u_{a}(x)+u_{b}(x+a)$
  for almost every $x\in A$, and
  \item for $\xi\in\mathcal{L}_{A}$, the map $\Omega\ni b\to \int_{A\setminus A+c} u_{b}(x)\overline{\xi(x)}dx$ is Borel measurable for all $c\in\Omega$.

\end{enumerate}
\end{definition}

 We denote the set of all $1$-cocycles by $Z^{1}(\Omega,\mathcal{L}_{A})$.  We say that $u $ is a \emph{coboundary} if there exists  $f\in \mathcal{L}_{A}$ such that  for every $a\in \Omega$,
\begin{equation*}u_{a}(x)=f(x+a)-f(x)
\end{equation*} for almost every $x\in A$.
Denote the set of coboundaries by $B^{1}(\Omega,\mathcal{L}_{A})$.  Note that  $Z^{1}(\Omega ,\mathcal{L}_{A})$ is a vector space and  $B^{1}(\Omega ,\mathcal{L}_{A})$ is a subspace. The quotient space $\frac{Z^{1}(\Omega,\mathcal{L}_{A})}{B^{1}(\Omega,\mathcal{L}_{A})}$ is denoted by $H^{1}(\Omega,\mathcal{L}_{A})$.  

\begin{remark}
Let $P$ be a closed, convex, pointed and a spanning cone in $\mathbb{R}^d$, and let $\Omega$ be its interior. 
Let $H$ be a closed subgroup of $\mathbb{R}^d$. 
Note that $\mathbb{R}^d$ acts on the homogeneous space $\mathbb{R}^d/H$ by translations. Let $A \subset \mathbb{R}^d/H$ be a non-empty, proper, Borel subset such that $A+P \subset A$. 
Consider the Hilbert space $L^2(A)$. Let $V^{A}:=V=\{V_a\}_{a \in P}$ be the isometric representation  defined by the equation 
\begin{equation}
 V_{a}(f)(x):=\begin{cases}
 f(x-a)  & \mbox{ if
} x-a \in A,\cr
   &\cr
    0 &  \mbox{ if } x-a \notin A.
         \end{cases}
\end{equation}

It is clear that the cohomology group $H^1(\Omega,\mathcal{L}_V)$ introduced in Section 3 coincides with $H^1(\Omega,\mathcal{L}_A)$. The main aim of this section is  compute $H^1(\Omega,\mathcal{L}_A)$ and $H^1_{a}(\Omega,\mathcal{L}_A)$.
\end{remark}

\begin{remark}
 First, we derive the general form of a $1$-cocycle. In the group situation, the general form of a $1$-cocycle for a transtive action is well known. Up to the authors' knowledge, the semigroup analogue 
 does not seem to be available in the literature that serves our purpose. Hence, we have taken some care to include complete proofs in the semigroup situation. 

\end{remark}

Let $H$ be a closed subgroup of $G$. Consider the action of $G$ on $\frac{G}{H}$ by translations. Suppose $\Omega$ is an open, spanning subsemigroup of $G$ such that $0 \in \overline{\Omega}$. Let $A\subset \frac{G}{H}$ be an $\Omega$-space. Note that $\bigcap_{b\in\Omega} (A+b)=\emptyset$, since $A\neq \frac{G}{H}$. Let $\pi:G\to\frac{G}{H}$ be the quotient map. The subgroup $H$ and the $\Omega$-space $A$
is fixed until further mention.

\begin{remark}
\label{no problem with open or closed}
Before proceeding further, we state the following facts.
\begin{enumerate}
\item The set $A\setminus Int(A)$ has measure zero. The reader is referred to Lemma 4.1 of \cite{renault+sundar} for a proof when $A$ is closed. The same  proof works  when $A$ is not necessarily closed.
\item The set $\overline{A}\setminus A$ has measure zero. Observe  that $\overline{A}\setminus A\subset \overline{A}\setminus Int(A)$. Note that $A+\Omega \subset Int(A)$.  Since $0 \in \overline{\Omega}$, $\overline{A}=\overline{Int(A)}$. Thus,  we can  replace $A$ by $Int(A)$, and assume $A$ is open. Note that $A^{c}:=G\setminus A$ is a closed $-\Omega$ space. Observe that $\overline{A}\setminus A=\partial A $. By $(1)$, $A^{c}\setminus Int(A^{c})=\partial A$ has measure zero. Here, $\partial(A)$ denotes the boundary of $A$.
\end{enumerate}
    \end{remark}

\begin{lemma}\label{lemma1}   Let $f: A\to\mathbb{C}$ be Borel. Suppose for any $a\in\Omega$, $f(x+a)=f(x)$ for almost every $x\in A$. Then, there exists $c_{0}\in\mathbb{C}$ such that $f=c_{0}$ almost everywhere.

\end{lemma}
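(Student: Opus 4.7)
The overall strategy is to exploit $\Omega - \Omega = G$ to upgrade the hypothesis ``$\Omega$-invariant modulo a null set depending on $a$'' into ``constant on a full-measure set'', using Fubini to merge the null exceptional sets in $x$ and a simple openness argument to compare arbitrary points of $A$.

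First, I would apply Fubini to the Borel set
\[
E := \{(x,a) \in A \times \Omega : f(x+a) \neq f(x)\}.
\]
Joint Borel measurability follows from continuity of the action of $G$ on $G/H$ and from $f$ being Borel. The hypothesis says that for every fixed $a \in \Omega$ the horizontal slice $\{x : (x,a) \in E\}$ is $\mu$-null. By Fubini, using $\sigma$-finiteness of $\mu$ and Haar measure on $\Omega$, there is a $\mu$-null set $N \subset A$ such that for every $x \in A_0 := A \setminus N$ the vertical slice $\{a \in \Omega : f(x+a) \neq f(x)\}$ is Haar-null.

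Next, I would show that $f$ is constant on $A_0$, which suffices. Pick $x_1, x_2 \in A_0$, let $g' \in G$ be any lift of $x_2 - x_1 \in G/H$, and write $g' = a_0 - b_0$ with $a_0, b_0 \in \Omega$, using $\Omega - \Omega = G$. The set $\Omega \cap (\Omega + g')$ is open and contains $a_0$, hence has positive Haar measure. Removing the two Haar-null sets $B_1 := \{a \in \Omega : f(x_1 + a) \neq f(x_1)\}$ and $\{a \in \Omega + g' : a - g' \in \Omega,\ f(x_2 + (a-g')) \neq f(x_2)\}$, we still find some $a \in \Omega$ with $b := a - g' \in \Omega$, $f(x_1 + a) = f(x_1)$, and $f(x_2 + b) = f(x_2)$. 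Since $x_1 + a = x_2 + b$ in $G/H$, this forces $f(x_1) = f(x_2)$, so $f$ is a constant $c_0$ on $A_0$.

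The main subtlety is measure-theoretic bookkeeping: $f$ is not assumed to be in any $L^p$ class, so the only Fubini input available is that horizontal slices of $E$ are null (we integrate the indicator of $E$, not $|f|$). Once this step is set up cleanly, everything collapses to the openness fact that $\Omega \cap (\Omega + g')$ is non-empty for every $g' \in G$, which is exactly what $\Omega - \Omega = G$ combined with $\Omega$ being open provides.
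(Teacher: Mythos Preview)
Your proof is correct and cleaner than the paper's. Both arguments start with the same Fubini step to pass from ``for each $a\in\Omega$, a.e.\ $x$'' to ``for a.e.\ $x$, a.e.\ $a$'', but the paper then splits into two cases: first it treats $A=\Omega$ by choosing a decreasing sequence $x_n\to 0$ in the good set and writing $\Omega=\bigcup_n(\Omega+x_n)$, and only afterwards handles a general $\Omega$-space by covering $A$ with translates $\Omega+x$ and matching the constants on overlaps. Your argument bypasses this case split entirely: once you have the full-measure set $A_0$, you compare \emph{any} two points $x_1,x_2\in A_0$ directly by lifting $x_2-x_1$ to $g'\in G$ and using that $\Omega\cap(\Omega+g')$ is open and nonempty, hence has positive Haar measure, so survives the removal of the two null exceptional sets. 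This is both shorter and avoids the auxiliary reduction to $H=0$ and to $A$ open that the paper performs. The only thing one might add for completeness is the one-line remark that $x_1+a\in A$ and $x_2+b\in A$ (so that $f$ is defined there), which follows from $A+\Omega\subset A$.
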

\begin{proof}
Since $A$ is a $\pi(\Omega)$-space, we may assume that $H=0$. In view of Remark \ref{no problem with open or closed}, removing a null set, we can assume that $A$ is open. First, assume $A=\Omega$.
 Let \[N=\{(a,x)\in \Omega\times \Omega : \textrm{$f(x+a)\neq f(x)$}\}.\] Then, $N$ is a Borel subset of $\Omega\times \Omega$. By assumption, the section $N_{a}=\{x\in A : (x,a)\in N\}$ has measure zero for any $a\in\Omega$. By Fubini's theorem, $N$ must have measure zero. 

Apply Fubini's theorem again  to see that for almost every $x\in A$, the set 
\[N^{x}=\{a\in\Omega :\textrm{$(a,x)\in N$}\}\] has measure zero.  Thus, there exists a subset $M$ of $\Omega$ of measure zero  such that for each $x\in\Omega \setminus M$, $f(x+a)=f(x)$ for almost every $a$. Now choose a sequence $x_{n}\in\Omega\setminus M$ such that $x_{n}\to 0$ and $x_{n+1}<x_{n}$ for each $n$. Then, $(\Omega+x_{n})\subset(\Omega+x_{n+1})$, and $\Omega=\bigcup_{n\geq 1}(\Omega + x_{n})$. On each of the sets $\Omega+x_{n}$, $f=f(x_{n})$ almost everywhere.
Since $\Omega+x_{n}\subseteq \Omega+x_{n+1}$, $f(x_{n})=f(x_{n+1})$ for each $n$. So, there  exists a constant $c_{0}\in\mathbb{C}$ such that $f(x_{n})=c_{0}$ for each $n\geq 1$. This proves the result when $A=\Omega$.

If $A$ is a general $\Omega$-space, then $A=\bigcup_{x\in A}(\Omega+x)$ since $A$ is open and $0 \in \overline{\Omega}$. Suppose $f=c_{y}$ almost everywhere on $\Omega+y$. For any $x,y\in A$, $(\Omega+x)\cap (\Omega+y)$ is a non-empty open set, hence has positive measure. Therefore,  $c_{x}=c_{y}$ for each $x,y\in A$. Pick any $x \in A$, and set $c_0:=c_x$. Then, $f=c_0$ almost everywhere.
\end{proof}

\begin{prop}\label{H=0} Suppose $H=0$.  Let $u\in Z^{1}(\Omega , \mathcal{L}_{A})$ be given. Then, there exists a Borel measurable function $f:A\to\mathbb{C}$ such that for each $a\in\Omega$,
\begin{equation*}u_{a}(x)=f(x+a)-f(x)
\end{equation*} for almost every $x\in A$. 
\end{prop}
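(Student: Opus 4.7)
The plan is to extend the cocycle from $\Omega$ to the group $G = \Omega - \Omega$ and construct $f$ as an evaluation along a fixed reference point $x_0 \in A$. Using the weak measurability of $u$ and the separability of $\mathcal{L}_A$, fix a jointly Borel representative $\widetilde{u}: \Omega \times A \to \mathbb{C}$ of $u$. By Fubini, the set $S := \{y \in A : c \mapsto \widetilde{u}(c,y) \text{ is Borel on } \Omega\}$ has full measure, so we may fix $x_0 \in S$. By a measurable selection argument applied to the Borel set $\{(x,d) \in A \times \Omega : d + (x - x_0) \in \Omega\}$ (whose $x$-sections are non-empty open subsets of $\Omega$ since $\Omega - \Omega = G$), choose Borel maps $c, d: A \to \Omega$ with $c(x) - d(x) = x - x_0$. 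Define
\[
f(x) := \widetilde{u}(c(x), x_0) - \widetilde{u}(d(x), x).
\]

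The first task is to show $f(x)$ does not depend on the choice of decomposition $x - x_0 = c - d$. Given any two decompositions $c - d = c' - d'$ in $\Omega$, the cocycle identity applied to $u_{c + d'}(x_0) = u_{c' + d}(x_0)$ yields $u_c(x_0) - u_{c'}(x_0) = u_d(x_0 + c') - u_{d'}(x_0 + c)$; using the equalities $x_0 + c = x + d$ and $x_0 + c' = x + d'$, followed by the cocycle identity for $u_{d + d'}(x)$, the right-hand side rewrites as $u_d(x) - u_{d'}(x)$. Thus $f(x)$ is intrinsic, and its Borel measurability follows from that of $c$, $d$, and $\widetilde{u}$.

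For the coboundary identity $u_a(x) = f(x + a) - f(x)$ with $a \in \Omega$, take the selection at $x + a$ to be $c(x+a) = c(x) + a$ and $d(x+a) = d(x)$. Then
\[
f(x + a) - f(x) = \bigl[u_{c(x) + a}(x_0) - u_{c(x)}(x_0)\bigr] - \bigl[u_{d(x)}(x + a) - u_{d(x)}(x)\bigr].
\]
The cocycle identity applied to $u_{c + a}(x_0)$, using $x_0 + c(x) = x + d(x)$, gives the first bracket as $u_a(x + d(x))$, while the cocycle identity applied to $u_{d + a}(x)$ gives the second bracket as $u_a(x + d(x)) - u_a(x)$. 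Subtracting yields $f(x + a) - f(x) = u_a(x)$ for almost every $x \in A$.

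The main technical obstacle is the a.e.-defined nature of each $u_c$ versus the need to evaluate at the single point $x_0$; this is resolved by the passage to a jointly measurable representative combined with the Fubini-based choice of $x_0 \in S$, which makes evaluation of $\widetilde{u}(\cdot, x_0)$ a Borel function of the first variable.
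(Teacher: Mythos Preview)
Your strategy --- fix a reference point $x_0$, write $x - x_0 = c(x) - d(x)$ via measurable selection, and set $f(x) = \widetilde{u}(c(x),x_0) - \widetilde{u}(d(x),x)$ --- is a different route from the paper, which first treats the case $A=\Omega$ by choosing a countable cofinal sequence $a_n \to 0$ of ``good'' points, setting $f_n(x) = \widetilde{u}(x-a_n,a_n)$ on $\Omega + a_n$, and patching (deferring to Arveson for the details), then covers a general $A$ by countably many translates of $\Omega$ and glues the resulting local solutions using Lemma~\ref{lemma1}. The paper's countable patching lets one intersect countably many full-measure conditions; your single-reference-point method is conceptually cleaner but forces you to control uncountably many almost-everywhere identities at once, and this is where the argument breaks down.

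The concrete gap is in your final paragraph. Membership $x_0 \in S$ guarantees only that $c \mapsto \widetilde{u}(c,x_0)$ is Borel; it says nothing about whether the cocycle identity $\widetilde{u}(c+a,x_0) = \widetilde{u}(c,x_0) + \widetilde{u}(a,x_0+c)$ holds at $x_0$. Your well-definedness argument invokes this identity with the specific parameters $(c(x),d'(x))$, and your coboundary verification invokes it with $(c(x),a)$; in both cases you are evaluating an almost-everywhere relation on a measure-zero slice. Even after strengthening the choice of $x_0$ so that the cocycle identity holds there for almost every pair $(c,a)$, you only get the conclusion for almost every $a \in \Omega$, since the exceptional set of pairs can have nontrivial $a$-projection. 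The argument as written does not address this; the claim that Borel measurability of the slice ``resolves'' the pointwise-evaluation issue is incorrect.

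The approach is salvageable: choose $x_0$ so that the cocycle identity holds for almost every $(c,a)$, prove that $(c,d) \mapsto \widetilde{u}(c,x_0) - \widetilde{u}(d,x_0+c-d)$ is almost everywhere constant along the fibers $c-d = \mathrm{const}$ by a Fubini argument (rather than comparing two specific selections), and then upgrade the resulting identity from almost every $a$ to every $a$ via $u_{a_0} = u_{a_0+b} - T_{a_0}u_b$ with $b$ and $a_0+b$ in the good set. But these steps are missing, and without them the proof does not go through.
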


\begin{proof}
We can assume that $A$ is open. 

\textbf{Case 1:} Suppose $A=\Omega$.
The proof in this case is an adaptation of a similar result from the one parameter case. We include some details of the proof here. The reader may refer to the proof of Thm. 5.3.2 of \cite{arveson} for more details in the $1$-parameter situation.
Suppose $u\in Z^{1}(\Omega,\mathcal{L}_{A})$. Let $v:\Omega\times\Omega\to\mathbb{C}$ be a measurable function such that for almost every $a\in\Omega$,
\begin{equation*}
v(a,x)=u_{a}(x)
\end{equation*} for almost every $x\in A$. By Fubini's theorem, for almost every $x\in\Omega$,
\begin{equation*}v(a+b,x)=v(a,x)+v(b,x+a)
\end{equation*} for almost every $(a,b)\in\Omega\times\Omega$. Suppose $N\subset\Omega$ is of measure zero such that for any $x\in\Omega\setminus N$,
\begin{equation*}v(a+b,x)=v(a,x)+v(b,x+a)
\end{equation*} for almost all $(a,b)\in\Omega\times \Omega$. 
Let $\{a_{n}\}_{n\in\mathbb{N}}$ be a sequence in $\Omega\setminus N$ such that $a_{n}>a_{n+1}$ for each $n\geq 1$, and $\{a_{n}\}$ converges to zero. Then, $\Omega=\cup_{n\geq 1}\Omega+a_{n}$.
For each $n\geq 1$, define a measurable function $f_{n}:\Omega+a_{n}\to\mathbb{C}$ by
\begin{equation*}
    f_{n}(x)=v(x-a_{n}, a_{n}).
\end{equation*} for $x\in \Omega+a_{n}$.
As in Thm. 5.3.2 of \cite{arveson}, the sequence $(f_{n})$ can be corrected and patched together to get a measurable function $f:\Omega\to\mathbb{C}$ such that
\begin{equation*}
    u_{a}(x)=f(x+a)-f(x)
\end{equation*} for almost every $x\in\Omega$.

\textbf{Case 2:} Suppose $A\subset G$ is a general $\Omega$-space. Observe that $A=\bigcup_{x\in A }\Omega+x$, since $A$ is open and $0 \in \overline{\Omega}$. Thanks to the second countability of $A$, there exists a sequence $\{x_{n}\}_{n\in\mathbb{N}}$ in $A$ such that  $A=\bigcup_{n\geq 1}(\Omega+ x_{n})$.
 Since $\Omega+x_{n}$ is a translate of $\Omega$,  for each $n\in\mathbb{N}$, by Case 1, there exists a Borel  function $f_{n}:\Omega+x_{n}\to\mathbb{C}$ such that for each $a\in\Omega$, \begin{equation*}
 u_{a}(x)=f_{n}(x+a)-f_{n}(x)
 \end{equation*} for almost every $x\in \Omega+x_{n}$. Fix $m,n\in\mathbb{N}$ and  assume that $m<n$. Then, the set $B_{m,n}:=(\Omega+x_{n})\cap (\Omega+x_{m})$ is a non empty open $\Omega$-space. Note that for every $a \in \Omega$, \[f_{m}(x+a)-f_{m}(x)=f_{n}(x+a)-f_{n}(x)\] for almost all $x \in B_{m,n}$. 
   Thanks to  Lemma \ref{lemma1}, there exists a complex number $C_{m,n}$ such that $f_m-f_n=C_{m,n}$ almost everywhere on $B_{m,n}$.

Considering the  equality
\begin{equation*}
    f_{m}-f_{n}=\sum_{k=m}^{n-1}(f_{k}-f_{k+1})
\end{equation*}
on  the non-empty open set $\displaystyle D=\bigcap_{k=m}^{n}(\Omega+x_{k})$, we see that 
$
    C_{m,n}=\sum_{k=m}^{n-1}C_{k,k+1}$.
Define a measurable function $g_{n}:\Omega+x_{n}\to\mathbb{C}$ by
\begin{equation*}
    g_{n}=f_{n}+\sum_{k=1}^{n-1}C_{k,k+1}.
\end{equation*}
Finally, define a function $f:A\to\mathbb{C}$ by
\begin{equation*}
 f(x):= \begin{cases}  
    g_{n}(x) &\textrm{if $x\in\Omega+x_{n}\setminus (\bigcup_{k=1}^{n-1}(\Omega+x_k)$)}.
    \end{cases}
\end{equation*} 
Then, $f$ is  well defined, and is  Borel. Also, for $a\in\Omega$, $u_{a}(x)=f(x+a)-f(x)$  for almost every $x\in A$.
\end{proof}

\begin{prop}\label{general form of a one cocycle}
Let $H$ be a closed subgroup of $G$. Let $A\subset\frac{G}{H}$ be an $\Omega$-space. Suppose $u\in Z^{1}(\Omega ,\mathcal{L}_{A})$. Then, there exists a measurable function $f:A\to\mathbb{C}$ and a measurable homomorphism $\sigma : G\to\mathbb{C}$ such that, given $a\in\Omega$, 
\begin{align*}
    u_{a}(x)=f(x+a)-f(x)+\sigma(a)
\end{align*} for almost every $x\in A$.
\end{prop}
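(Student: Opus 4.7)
The plan is to isolate from $u$ a measurable homomorphism $\sigma\colon G\to\mathbb{C}$ that captures the $H$-direction of the cocycle, and then to invoke Prop.~\ref{H=0} on the $\pi(\Omega)$-space $A\subset G/H$ to take care of what remains. The one nontrivial point will be extending $\sigma$ from the subgroup $H$ to all of $G$; everything else is a routine computation.

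First, given $h\in H$ I pick $a\in\Omega$ with $a+h\in\Omega$ (possible since $\Omega-\Omega=G$) and set $\phi^a_h(y):=u_{a+h}(y)-u_a(y)$. Writing the cocycle identity for $(a+h)+a'$ in both orders, and subtracting the analogous identity for $a+a'$, the $u_{a'}$ terms cancel, leaving $\phi^a_h(y)=\phi^a_h(y+\pi(a'))$ for almost every $y$ and for every $a'\in\Omega$. Lemma~\ref{lemma1}, applied inside $G/H$, then forces $\phi^a_h$ to be almost everywhere a constant. A parallel computation based on $(a_1+h)+a_2=a_1+(a_2+h)$ shows this constant is independent of $a$; call it $\sigma_0(h)$. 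Additivity of $\sigma_0\colon H\to\mathbb{C}$ follows from the telescoping identity $\sigma_0(h_1+h_2)=[u_{a+h_1+h_2}-u_{a+h_1}]+[u_{a+h_1}-u_a]$, and measurability is obtained by integrating $u_{a+h}(y)-u_a(y)=\sigma_0(h)$ against the characteristic function of a set $B\subset A\setminus(A+c)$ of finite positive measure on which the relevant functions lie in $L^2$ by the definition of $Z^1(\Omega,\mathcal{L}_A)$.

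The hard part is extending $\sigma_0$ to a measurable homomorphism $\sigma\colon G\to\mathbb{C}$, and this is the place where the structure of $G$ enters. In the case $G=\mathbb{R}^d$ relevant to the computations in the next section, it reduces to linear algebra: after choosing coordinates so that $H\cong\mathbb{R}^p\times\mathbb{Z}^q$ sits as $\mathbb{R}^p\times\mathbb{Z}^q\times\{0\}^{d-p-q}$, the restriction of $\sigma_0$ to the identity component is automatically $\mathbb{R}$-linear, the values on the generators of the discrete part are arbitrary, and both extend $\mathbb{R}$-linearly to all of $\mathbb{R}^d$.

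Once $\sigma$ is available, I define $\bar{u}_{\bar{a}}(y):=u_a(y)-\sigma(a)$ with $\bar{a}:=\pi(a)$. The identity $u_{a+h}(y)=u_a(y)+\sigma_0(h)$ for $h\in H$, together with $\sigma|_H=\sigma_0$, ensures that $\bar{u}_{\bar{a}}$ depends only on $\bar{a}$, and a direct verification shows $\bar{u}\in Z^1(\pi(\Omega),\mathcal{L}_A)$ with respect to the action of $\pi(\Omega)\subset G/H$ on $A$. Applying Prop.~\ref{H=0} inside the group $G/H$ (with trivial subgroup) produces a Borel $f\colon A\to\mathbb{C}$ with $\bar{u}_{\bar{a}}(y)=f(y+\bar{a})-f(y)$ almost everywhere, and substituting back gives $u_a(y)=f(y+a)-f(y)+\sigma(a)$, as required.
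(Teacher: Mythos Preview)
Your approach is essentially identical to the paper's: both isolate a homomorphism $\sigma_0\colon H\to\mathbb{C}$ from differences $u_a-u_b$ with $a-b\in H$ (your $u_{a+h}-u_a$ is the same thing), extend it to all of $G$, subtract, and invoke Prop.~\ref{H=0} on the resulting $\pi(\Omega)$-cocycle. The paper packages the first step as Lemma~\ref{lemma3}, using the measurable sections $\delta_i\colon G\to\Omega$ from \cite{Laca} for measurability rather than your integration trick, but the content is the same.

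The one genuine gap is the extension step. The proposition is stated for a general locally compact, second countable, abelian group $G$, and you only carry out the extension of $\sigma_0$ for $G=\mathbb{R}^d$. The paper handles the general case by first observing that the measurable homomorphism $\sigma_0$ is automatically continuous, and then invoking a theorem of Moskowitz (Thm.~3.2 of \cite{Moskowitz}) which guarantees that a continuous homomorphism from a closed subgroup of an LCA group into $\mathbb{C}$ extends to a continuous homomorphism on all of $G$. You are right that for the applications later in the paper only $G=\mathbb{R}^d$ matters, but as written your argument does not establish the proposition in the generality claimed.
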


\begin{lemma}\label{lemma3} Let $A\subset\frac{G}{H}$ be an $\Omega$-space. Let  $u\in Z^{1}(\Omega,\mathcal{L}_{A})$. Suppose $h\in H$ is of the form $h=a-b$ for some $a,b\in\Omega$. 
Then, there exists $\sigma(h)\in\mathbb{C}$ such that for almost all $x \in A$, $u_{a}(x)-u_{b}(x)=\sigma(h)$. Moreover, the map $\sigma:H \to \mathbb{C}$ is a well defined Borel homomorphism.
\end{lemma}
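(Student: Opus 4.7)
The plan is to exploit the cocycle identity together with the fact that the action of $G$ on $A \subset G/H$ factors through $G/H$, so that any $h \in H$ acts trivially on $A$. Fix $h = a - b \in H$ with $a, b \in \Omega$. For each $c \in \Omega$, applying the cocycle identity to $a + c = c + a$ yields
\[
u_a(x) + u_c(x + a) \,=\, u_{a+c}(x) \,=\, u_c(x) + u_a(x + c)
\]
for almost every $x \in A$, and the analogous equality holds with $b$ in place of $a$. Subtracting the two equalities and using $x + a = x + b$ in $A$ (since $a - b \in H$), which forces $u_c(x + a) = u_c(x + b)$, I obtain
\[
u_a(x) - u_b(x) \,=\, u_a(x + c) - u_b(x + c) \quad \text{a.e.\ } x.
\]
Since $c \in \Omega$ was arbitrary, Lemma~\ref{lemma1} applied to the Borel function $f(x) := u_a(x) - u_b(x)$ produces a constant $\sigma(h) \in \mathbb{C}$ with $u_a(x) - u_b(x) = \sigma(h)$ almost everywhere on $A$.

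For well-definedness of $\sigma(h)$, suppose $h = a_1 - b_1 = a_2 - b_2$ with all four elements in $\Omega$. Using $\Omega - \Omega = G$, pick $c \in \Omega$ such that $k := a_1 + c - a_2 \in \Omega$; the equality $a_1 - b_1 = a_2 - b_2$ then forces $k = b_1 + c - b_2$ as well. Two applications of the cocycle identity give
\[
u_{a_1 + c}(x) - u_{b_1 + c}(x) \,=\, \bigl(u_{a_2}(x) - u_{b_2}(x)\bigr) + \bigl(u_k(x + a_2) - u_k(x + b_2)\bigr),
\]
and the second bracket vanishes because $x + a_2 = x + b_2$ in $A$. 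On the other hand, the argument of the previous paragraph applied to the pair $(a_1 + c, b_1 + c)$, whose difference is $h$, yields $u_{a_1 + c}(x) - u_{b_1 + c}(x) = u_{a_1}(x) - u_{b_1}(x)$ a.e. Combining, the two candidate values of $\sigma(h)$ agree. The homomorphism property is then direct: for $h_i = a_i - b_i$, the cocycle identity gives
\[
u_{a_1 + a_2}(x) - u_{b_1 + b_2}(x) \,=\, \bigl(u_{a_1}(x) - u_{b_1}(x)\bigr) + \bigl(u_{a_2}(x + a_1) - u_{b_2}(x + b_1)\bigr),
\]
and $x + a_1 = x + b_1$ in $A$ makes the second bracket equal to $\sigma(h_2)$, so $\sigma(h_1 + h_2) = \sigma(h_1) + \sigma(h_2)$.

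For Borel measurability, I would produce a Borel selection $h \mapsto (a(h), b(h)) \in \Omega \times \Omega$ with $a(h) - b(h) = h$. Fix a countable dense sequence $\{b_n\}$ in $\Omega$. For each $h \in H$ the open set $\Omega \cap (\Omega - h)$ is nonempty (because $\Omega - \Omega = G$), so the index $n(h) := \min\{n : b_n + h \in \Omega\}$ is defined, each level set $\{h \in H : n(h) = n\}$ is Borel in $H$, and setting $b(h) := b_{n(h)}$, $a(h) := b_{n(h)} + h$ produces the desired selection. Fixing a Borel set $E \subset A$ of finite positive measure contained in $A \setminus (A + c_0)$ for some $c_0 \in \Omega$,
\[
\sigma(h) \,=\, \frac{1}{\mu(E)} \int_E \bigl(u_{a(h)}(x) - u_{b(h)}(x)\bigr)\, d\mu(x),
\]
and the measurability hypothesis in the definition of a $1$-cocycle delivers the Borel measurability of $\sigma$.

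The main obstacle I anticipate is the last step: bridging the "Borel in the parameter $b$ after pairing against a fixed $\xi$" hypothesis to genuine joint measurability of $(h, x) \mapsto u_{a(h)}(x) - u_{b(h)}(x)$ on $H \times E$, which is needed for the integral formula to depend Borel-measurably on $h$. The remedy, as in the proof of Prop.~\ref{H=0}, is to pass to a jointly measurable representative of $(a, x) \mapsto u_a(x)$ via a Fubini-type argument and then precompose with the Borel selection $h \mapsto (a(h), b(h))$.
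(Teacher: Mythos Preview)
Your proof is correct and follows essentially the same route as the paper: translation-invariance of $u_a - u_b$ via the cocycle identity and $x+a=x+b$, then Lemma~\ref{lemma1} for the constant, a direct cocycle computation for well-definedness and the homomorphism property, and a Borel selection $h\mapsto(a(h),b(h))$ for measurability. The only cosmetic differences are that the paper cites \cite{Laca} for the Borel selection where you build one by hand from a countable dense set in $\Omega$, and that your measurability worry is in fact moot: as recorded in Remark~\ref{continuity of cocyclesblah}, the map $\Omega\ni a\mapsto u_a\in\mathcal{L}_A$ is continuous, so composing with your Borel selection and the continuous functional $f\mapsto \mu(E)^{-1}\int_E f$ already gives $\sigma$ Borel without passing to a jointly measurable representative.
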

\begin{proof}
 For $c\in\Omega$, note that
\begin{align*}
   u_{a}(x+c)-u_{b}(x+c) =&u_{a+c}(x)-u_{b+c}(x)\ \ \textrm{(by the cocycle identity)}\\
                         =&(u_{a}(x)+u_{c}(x+a))-(u_{b}(x)+u_{c}(x+b))\\
                         =& u_{a}(x)-u_{b}(x)\ \  \textrm{( since $x+a=x+b$   )}
\end{align*} for almost all $x\in A$. 
By Lemma \ref{lemma1}, there exists a constant $c_{a,b}\in\mathbb{C}$ such that for almost every $x\in A$, $(u_{a}-u_{b})(x)=c_{a,b}$ .
Assume that $h=a-b=a^{'}-b^{'}$ for $a,b,a^{'},b^{'}\in\Omega$. We show that $c_{a,b}=c_{a^{'},b^{'}}$. Observe that
\begin{align*}
u_{a^{'}}(x+a)-u_{b^{'}}(x+a)=u_{a^{'}}(x)-u_{b^{'}}(x)
\end{align*} for almost every $x\in A$.
Note that
\begin{align*} u_{a+b^{'}}(x)+u_{a^{'}}(x+a)-u_{b^{'}}(x+a)=&u_{a^{'}+b}(x)+u_{a^{'}}(x)-u_{b^{'}}(x) ~~( \textrm{since $a+b^{'}=a^{'}+b$})
\end{align*}for almost every $x\in A$. This implies
\begin{align*}u_{a+b^{'}}(x)+u_{b^{'}}(x)-u_{b^{'}}(x+a)=&u_{a^{'}+b}(x)-u_{a^{'}}(x+b)+u_{a^{'}}(x)   ~~\textrm{(since $x+a=x+b$)}
\end{align*} for almost every $x\in A$.
By the cocycle identity, $u_{a+b^{'}}(x)-u_{b^{'}}(x+a)=u_{a}(x)$ and $u_{a^{'}+b}(x)-u_{a^{'}}(x+b)=u_{b}(x)$ for almost every $x\in A$. Now the conclusion follows.
    
Define $\sigma(h)=c_{a,b}$. We have shown that $H\ni h\to\sigma(h)$ is well defined.
It is a routine to check that $\sigma$ is a homomorphism.

Let $g\in G$. Then, there exist measurable maps $\delta_{i}:G\to\Omega$ for $i=1,2$ such that $g=\delta_{1}(g)-\delta_{2}(g)$ (page 361, \cite{Laca}). For $h\in H$,
$\sigma(h)=u_{\delta_{1}(h)}(x)-u_{\delta_{2}(h)}(x)$ for almost every $x\in A$.
Since the map $\Omega\ni a\to u_{a} \in \mathcal{L}_A$ is measurable, $\sigma$ is Borel.
\end{proof}

\begin{proof}[Proof of Prop. \ref{general form of a one cocycle}:] 
 Let $u\in Z^{1}(\Omega,\mathcal{L}_{A})$ be given.
Let $\sigma : H\to\mathbb{C}$ be the Borel homomorphism defined in \ref{lemma3}. Since any measurable homomorphism is continuous, $\sigma$ is continuous. The homomorphism $\sigma$ can be extended to a continuous homomorphism $\widetilde{\sigma}$ on $G$ (see Thm. 3.2 of \cite{Moskowitz}), i.e. there exists a continuous homomorphism $\widetilde{\sigma}:G\to \mathbb{C}$ such that 
\begin{equation*} 
\widetilde{\sigma}(h)=\sigma(h)
\end{equation*} 
for every $h\in H$. 

Let $\widetilde{u}:\Omega\to\mathcal{L}_{A}$ be defined by $\widetilde{u}_{a}=u_{a}-\widetilde{\sigma}(a)$ for $a\in\Omega$, and let  $w:\pi(\Omega)\to\mathcal{L}_{A}$ be defined by $w_{\pi(a)}:= \widetilde{u}_{a}$. It follows from the definition of $\sigma$ that  $w$ is well defined. Clearly, $w$ satisfies the cocycle identity. 
Note that $\widetilde{u}=w\circ\pi$. Hence, $w$ is a Borel (in fact, continuous) map from $\pi(\Omega)$ into $\mathcal{L}_{A}$.

Observe that $\pi(\Omega)-\pi(\Omega)=\frac{G}{H}$, and $A$ is a $\pi(\Omega)$-space. Since $w\in Z^{1}(\pi(\Omega), \mathcal{L}_{A})$, by Proposition \ref{H=0}, there exists a measurable function $f:A\to\mathbb{C}$ such that
\begin{align*}
    w_{\pi(a)}(x)=f(x+a)-f(x)
\end{align*} for almost all $x \in A$. Since, for every $a\in\Omega$, $u_{a}=w_{\pi(a)}+\widetilde{\sigma}(a)$, we have 
\begin{align*}
    u_{a}(x)=f(x+a)-f(x)+\widetilde{\sigma}(a)
\end{align*} for almost every $x\in A$. The proof is complete. 
\end{proof}

\begin{remark} \label{sigma}Let $u\in Z^{1}(\Omega,\mathcal{L}_{A})$. By Proposition \ref{general form of a one cocycle}, there exist a Borel measurable function $f:A\to \mathbb{C}$ and a Borel homomorphism $\sigma:G\to\mathbb{C}$ such that
\begin{align*}
    u_{a}(x)=f(x+a)-f(x)+\sigma(a)
\end{align*} for almost every $x\in A$. We point out here that neither the homomorphism $\sigma$, nor the function $f$ is unique. Suppose $\sigma_{1}:G\to\mathbb{C}$ is a Borel homomorphism such that $\sigma_{1}(h)=\sigma(h)$ for every $h\in H$. Then, there exists a Borel measurable map $g:A\to\mathbb{C}$ such that for $a\in\Omega$,
\begin{align*}
    u_{a}(x)=g(x+a)-g(x)+\sigma_{1}(a)
\end{align*} for almost evrey $x\in A$.
To see this, note that $\sigma-\sigma_{1}$ factors through $\pi$. Let $\widetilde{g}:\frac{G}{H}\to\mathbb{C}$ be such that $\sigma-\sigma_{1}=\widetilde{g}\circ\pi$. 
Now observe that for $a\in\Omega$,
\begin{equation*}
u_{a}(x)=(f+\widetilde{g})(x+a)-(f+\widetilde{g})(x)+\sigma_{1}(a)
\end{equation*}
for almost every $x\in A$. %Let $g=f+\widetilde{g}$. Then, for any $a\in\Omega$,
%\begin{align*}
 %   u_{a}(x)=g(x+a)-g(x)+\sigma_{1}(a)
%\end{align*} for almost every $x\in A$.

Also, suppose  there exist a Borel map $g:A\to\mathbb{C}$ and a measurable homomorphism $\sigma_{1}:G\to\mathbb{C}$ such that for $a\in\Omega$,
\begin{align*}
    u_{a}(x)=g(x+a)-g(x)+\sigma_{1}(a)
\end{align*} for almost every $x\in A$. Then, $\sigma=\sigma_{1}$ on $H$. 
\end{remark}

Hereafter, we consider the case when $G=\mathbb{R}^{d}$. 
Let $P$ be a closed convex cone in $\mathbb{R}^{d}$ that is spanning and pointed. Recall that $\Omega$ stands for  the interior of $P$. Note that $\Omega$ is also a spanning subsemigroup of $\mathbb{R}^{d}$.   Here onwards in this section, all $\Omega$- spaces will be assumed to be closed. 
  The only reason for this is that we have to refer to a few results proved in \cite{piyasa} and in \cite{Anbu}, where only closed $\Omega$-spaces are considered, although the results carry over to any $\Omega$-space. For ease of reference, we consider the closed case. As observed in Remark \ref{no problem with open or closed}, this does not lead to loss of generality. 
  
  Let $H$ be a closed subgroup of $\R^{d}$. Denote the quotient map from $\R^{d}$ onto $\RH$ by $\pi$. For $x\in\R^{d}$, denote $\pi(x)$ by $[x]$ for each $x\in\R^{d}$. Fix an $\Omega$-space $\displaystyle A\subset\frac{\mathbb{R}^{d}}{H}$. Let $\widetilde{A}=\pi^{-1}(A)$. Note that $\widetilde{A}$ is  a closed $\Omega$-space in $\R^{d}$.
  
  Fix a basis $\{v_{i}\}_{i=1}^{d}$ for the vector space $\R^{d}$ such that $v_{i}\in\Omega$ for every $i=1,2,\cdots , d$. We identify $span \ \{ v_{1},v_{2},\cdots , v_{d-1}\}$ with $\R^{d-1}$ and $span\ \{v_{d}\}$ with $\R$. This way, $\R^{d}$ is identified with $\R^{d-1}\times \R$. We will write an element of $\R^{d}$ as $(y,t)$, where $y\in\R^{d-1}$ and $t\in\R$.  We denote the interval $[0,\infty)$ by $\R^{+}$.
  
  By Lemma 4.3 of \cite{Anbu}, there exists a continuous function $\varphi:\R^{d-1}\to\R$ such that 
 \begin{equation*}
          \widetilde{A}=\{(y,t)\in\R^{d}:\textit{ $t\geq\varphi(y)$}\}.
\end{equation*}
The subgroup $H$, the space $A$, the space $\widetilde{A}$, the function $\varphi$,
and the basis $\{v_1,v_2,\cdots,v_d\}$ are fixed for the rest of this paper.

Note that the boundary $\partial \widetilde{A}=\{(y,t)\in\R^{d}:\textrm{ $t=\varphi(y)$}\}$. Since $\widetilde{A}$ is a saturated set, we have  $\partial\widetilde{A}+h=\partial(\widetilde{A}+h)=\partial\widetilde{A}$ for each $h\in H$. Therefore, 
\begin{equation}\label{phi}
    \varphi(y+h_{1})=\varphi(y)+h_{2}
\end{equation} for every $y\in\R^{d-1}$ and $(h_{1},h_{2})\in H$. 

Define
\begin{equation*}
    \Omega_{0}=\{a_{1}\in\R^{d-1}:\textrm{$(a_{1},a_{2})\in\Omega$ for some $a_{2}\in\R$}\}.
\end{equation*}
Then, $\Omega_{0}$ is an open, convex, spanning cone in $\R^{d-1}$. Observe that $\Omega_{0}$ contains the basis $\{v_{1},v_{2},\cdots ,v_{d-1}\}$ of $\R^{d-1}$.

\begin{remark}\label{bdd}
 Fix $a_{1}\in\R^{d-1}$. Let $\widetilde{\varphi}:\R^{d-1}\to\R$ be the function \begin{equation*}y\to \varphi(y+a_{1})-\varphi(y).\end{equation*} We claim that $\widetilde{\varphi}$ is bounded. 
 
 Since any element in $\mathbb{R}^{d-1}$ can be written as a difference of two elements of $\Omega_{0}$, it suffices to consider the case when $a_1 \in \Omega_0$. Let  $a_{1}\in\Omega_{0}$. Suppose $a_{2}\in\R$ is such that $(a_{1},a_{2})\in\Omega$. Since $(0,t) \in \Omega$ for $t>0$, $(a_1,a_2+t) \in \Omega$ for every $t>0$. Thus, we can choose $a_2>0$ such that $(a_1,a_2) \in \Omega$. 
 
 Since $\widetilde{A}$ is an $\Omega$-space, $(y,\varphi(y))+(a_{1},a_{2})\in\widetilde{A}$  for any $y\in \R^{d-1}$, i.e.
 \begin{equation*}\varphi(y+a_{1})\leq \varphi(y)+a_{2}.\end{equation*} Hence,
 \begin{equation*}\varphi(y+a_{1})-\varphi(y)\leq a_{2}\leq Ma_{2}
 \end{equation*} for any $y\in \R^{d-1}$ and $M\geq 1$. Since
 $\{(0,na_{2})\}_{n\geq 1}$ is a cofinal set, there exists a positive integer $M$ such that $(-a_{1}, Ma_{2})\in\Omega$. Therefore, $(y+a_{1},\varphi(y+a_{1}))+(-a_{1},Ma_{2})\in\widetilde{A}$ for every $y \in \mathbb{R}^{d-1}$. This implies that
 \begin{equation*}
     \varphi(y)\leq \varphi(y+a_{1})+Ma_{2}
 \end{equation*} for each $y\in\R^{d-1}$. Hence, $|\varphi(y+a_1)-\varphi(y)|\leq Ma_2$
 for every $y \in \mathbb{R}^{d-1}$.
 \end{remark}
 
 Let \begin{equation*}
      H_{0}:=\{y\in\R^{d-1} :\textit{ $(y,t)\in H$ for some $t\in\R$}\}.
  \end{equation*}
   \begin{lemma}
The subgroup $H_{0}$ is closed.
\end{lemma}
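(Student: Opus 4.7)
The plan is to exploit the structural constraint that Eq. \ref{phi} places on $H$: the first coordinate of an element of $H$ completely determines the second coordinate via $\varphi$. More precisely, taking $y=0$ in Eq. \ref{phi}, we see that any $(h_1,h_2)\in H$ must satisfy $h_2=\varphi(h_1)-\varphi(0)$. Consequently, the projection $\pi_1\colon H\to H_0$, $(h_1,h_2)\mapsto h_1$, is a bijection with continuous inverse $h_1\mapsto (h_1,\varphi(h_1)-\varphi(0))$, since $\varphi$ is continuous.

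With that observation in hand, closedness of $H_0$ is immediate. Given a sequence $(y_n)$ in $H_0$ with $y_n\to y$ in $\mathbb{R}^{d-1}$, set $t_n:=\varphi(y_n)-\varphi(0)$. By the preceding paragraph, $(y_n,t_n)\in H$ for every $n$. Continuity of $\varphi$ gives $t_n\to t:=\varphi(y)-\varphi(0)$, so $(y_n,t_n)\to (y,t)$ in $\mathbb{R}^d$. Since $H$ is closed, $(y,t)\in H$, and therefore $y\in H_0$.

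There is no real obstacle here; the whole point is the rigidity imposed by the fact that $\widetilde{A}$ is $H$-saturated, which forces the graph of $\varphi$ to be invariant under $H$. The argument would fail without this $H$-invariance, and indeed without it one cannot generally conclude that a projection of a closed subgroup to a coordinate subspace is closed. I would present the proof in two short paragraphs: first deriving the formula $h_2=\varphi(h_1)-\varphi(0)$ from Eq. \ref{phi} with $y=0$, and then running the straightforward sequential closedness argument using continuity of $\varphi$.
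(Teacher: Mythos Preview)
Your proof is correct and follows essentially the same approach as the paper's: both use Eq.~\ref{phi} to express the second coordinate of an element of $H$ in terms of the first via $\varphi$, then invoke continuity of $\varphi$ and closedness of $H$ to finish a sequential argument. The only cosmetic difference is that you specialize $y=0$ in Eq.~\ref{phi} to get $h_2=\varphi(h_1)-\varphi(0)$, whereas the paper leaves the base point unspecified (writing $s_n=\varphi(y+y_n)-\varphi(y)$); your added remark that $\pi_1\colon H\to H_0$ is a homeomorphism is a nice structural observation but not needed for the bare closedness claim.
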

\begin{proof}
Suppose $(y_{n})$ is a sequence in $H_{0}$ that converges to $y\in\mathbb{R}^{d-1}$. For each $n\geq 1$, let $s_{n}\in\R$  be such that $(y_{n},s_{n})\in H$. By Eq. \ref{phi},  $s_{n}=\varphi(y+y_{n})-\varphi(y)$ for each $n\geq 1$. By the continuity of the map $\varphi$, the sequence $(s_{n})$ converges to some $s\in \mathbb{R}$.  Now $((y_{n},s_{n}))$ converges to $(y,s)$. Since $H$ is closed, $(y,s)\in H$. In particular, $y\in H_{0}$. Hence the proof.
\end{proof}
Let $\eta :\R^{d-1}\to\RO$ be the quotient map. Denote $\eta(y)$ by $\langle y\rangle$ for $y\in\R^{d-1}$.
We will denote the Haar measures on $\RH$, $\RO$ and $\R$ by $\mu$, $\mu_{1}$, and $\mu_{2}$ respectively.

  \begin{lemma}

  \label{lemma4}
  Keep the foregoing notation.
  \begin{enumerate}[(1)]
\item The map $\Delta:\RO\times \R\to \RH$ defined by
\begin{equation*}
    \Delta(\langle y\rangle,t)=[(y,\varphi(y)+t)] 
\end{equation*} is a homeomorphism. 
Moreover, $\Delta(\RO\times\R^{+})=A$, and $\Delta(\RO\times\{0\})=\partial A$.

\item Denote by $\boxplus$ the action of $\R^{d}$ on $\RO\times \R$ induced via the homeomorphism $\Delta$. Then, $\Delta$ is an isomorphism between the quadraples $(\RO\times\R,\mu_{1}\times\mu_{2},\boxplus,\RO\times\R^{+})$ and $(\RH,\mu,+, A)$.

\end{enumerate}

 \end{lemma}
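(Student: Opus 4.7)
The plan is to split the proof into two movements corresponding to parts $(1)$ and $(2)$, using Eq.~\ref{phi} as the essential compatibility lemma throughout.

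For part $(1)$, I will first verify that $\Delta$ is well-defined on $\RO\times\R$. If $\langle y_1\rangle=\langle y_2\rangle$, then $y_1-y_2\in H_0$, so there is $s\in\R$ with $(y_1-y_2,s)\in H$; Eq.~\ref{phi} then gives $\varphi(y_1)-\varphi(y_2)=s$, so $(y_1,\varphi(y_1)+t)-(y_2,\varphi(y_2)+t)=(y_1-y_2,s)\in H$ and the images in $\RH$ agree. Continuity of $\Delta$ is immediate from continuity of $\varphi$ and the quotient maps. To invert it, I will define $\Psi:\RH\to\RO\times\R$ by $\Psi([(y,t)])=(\langle y\rangle,t-\varphi(y))$; the same Eq.~\ref{phi} argument shows $\Psi$ is well-defined, and by construction $\Psi\circ\Delta=\mathrm{id}$ and $\Delta\circ\Psi=\mathrm{id}$. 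Continuity of $\Psi$ follows again from continuity of $\varphi$ and the quotient maps. Finally, from the description $\widetilde{A}=\{(y,t):t\geq\varphi(y)\}$ one reads $A=\pi(\widetilde{A})=\Delta(\RO\times\R^{+})$, and since $\Delta$ is a homeomorphism it sends the topological boundary $\RO\times\{0\}$ of $\RO\times\R^{+}$ onto $\partial A$.

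For part $(2)$, the transferred action is forced to be $(\langle y\rangle,t)\boxplus a=\Delta^{-1}(\Delta(\langle y\rangle,t)+a)$. A direct computation gives, for $a=(a_1,a_2)\in\R^d$,
\[
(\langle y\rangle,t)\boxplus (a_1,a_2)=\bigl(\langle y+a_1\rangle,\ t+a_2+\varphi(y)-\varphi(y+a_1)\bigr).
\]
The key observation, which I will prove using Eq.~\ref{phi}, is that the quantity $\varphi(y)-\varphi(y+a_1)$ depends only on $\langle y\rangle$ (and on $a_1$), so writing it as $g_{a_1}(\langle y\rangle)$ yields a Borel map $\boxplus$ on $\RO\times\R$.

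To see that $\mu_1\times\mu_2$ is $\boxplus$-invariant, fix $(a_1,a_2)$ and apply Fubini: integrating first over $t$ uses translation invariance of $\mu_2$ under $t\mapsto t+a_2+g_{a_1}(\langle y\rangle)$, and then integrating over $\langle y\rangle$ uses translation invariance of $\mu_1$ under $\langle y\rangle\mapsto\langle y+a_1\rangle=\langle y\rangle+\langle a_1\rangle$. Consequently $\Delta_*(\mu_1\times\mu_2)$ is an $\R^d$-invariant Radon measure on $\RH$, and by uniqueness of Haar measure on the homogeneous space $\RH$ it coincides with $\mu$ after normalising $\mu$ appropriately. The only step where I anticipate any subtlety is the well-definedness of the skew term $g_{a_1}(\langle y\rangle)$: the argument requires applying Eq.~\ref{phi} twice to see that if $y_1-y_2\in H_0$ then $\varphi(y_1)-\varphi(y_1+a_1)=\varphi(y_2)-\varphi(y_2+a_1)$, but this is a short computation.
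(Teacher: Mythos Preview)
Your proposal is correct and follows essentially the same route as the paper: both build $\Delta$ and its inverse $\Psi$ via the shear $(y,t)\mapsto(y,\varphi(y)+t)$, use Eq.~\ref{phi} for well-definedness on the quotients, and compute the induced action $\boxplus$ explicitly. Your treatment of part~(2) is in fact more detailed than the paper's, which simply records the formula for $\boxplus$ and asserts that the measure statement follows, whereas you spell out the Fubini/translation-invariance argument and invoke uniqueness of Haar measure on $\RH$.
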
 

 \begin{proof}
 
  Clearly, the map $\widetilde{\Delta}:\R^{d-1}\times \R\to \R^{d}$ defined by
\begin{equation*}
  \widetilde{\Delta}(y,t)=(y,\varphi(y)+t) 
\end{equation*} is a homeomorphism.
 We claim that the  map $\xi= \pi\circ\widetilde{\Delta}$ factors through  $\eta\times Id$. Suppose $y_{1},y_{2}\in\R^{d-1}$ are such that $y_{1}=y_{2}+z$ for some $z\in H_{0}$. Then, observe that
 \begin{equation*}\xi(y_{1},t)=[(y_{2}+z,\varphi(y_{2}+z)+t)].
 \end{equation*}
 Choose $s \in \mathbb{R}$ such that $(z,s) \in H$. By Eq. \ref{phi}, $\varphi(y_{2}+z)=\varphi(y_{2})+s$. Now, \begin{equation*}\widetilde{\Delta}(y_{1},t)=\widetilde{\Delta}(y_{2},t)+(z,s),
 \end{equation*} which implies
 \begin{equation*}
 \xi(y_{1},t)=\xi(y_{2},t).
 \end{equation*}
 This proves the claim.
 
 Hence, there  exists a continuous map $\Delta:\RO\times \R\to\RH$ such that $\xi=\Delta\circ(\eta\times Id)$. Note that\begin{equation*} 
 \Delta(\langle y\rangle,t)=[(y,\varphi(y)+t)].
 \end{equation*}
 We claim that $\Delta$ is a homeomorphism. Define  $\Phi: \R^{d}\to \RO\times \R$ by 
 \begin{equation*}\Phi(y,t)= (\langle y\rangle,t-\varphi(y))\end{equation*}
 for $y\in \R^{d-1}$ and $t\in\R$. It can be checked similarly that $\Phi$ factors through $\pi$. Hence, there exists  a continuous map $\Psi:\RH\to\RO\times \R$ such that $\Phi=\Psi\circ\pi$. Notice that \begin{equation*}\Psi([(y,t)])=(\langle y\rangle,t-\varphi(y)).\end{equation*} Clearly, $\Delta$ and $\Psi$ are inverses of each other. This completes the proof.

Recall that $A=\pi(\widetilde{A})$. Hence, $A=\{[(y,t)]:\textit{ $t-\varphi(y)\geq 0.$}\}$. From this, it is immediate that \begin{equation*}A=\Delta(\RO\times \R^{+}).\end{equation*} 
It is clear that $\partial A=\Delta(\RO\times \{0\})$.

 Let $(a_{1},a_{2})\in\R^{d}$, and $(\langle y\rangle, t)\in \RO\times\R$. The action $\boxplus$ is given by 
\begin{equation}\label{action}
(\langle y\rangle,t)\boxplus(a_{1},a_{2}):=(\langle y+a_{1}\rangle,\varphi(y)-\varphi(y+a_{1})+a_{2}+t).
\end{equation}
The second assertion follows from the above equation. 
 \end{proof}
Abusing notation, we will henceforth denote the element $\langle y\rangle\in\RO$ by $y$. 
We can now assume that $A$ is the $\Omega$-space $\RO\times\R^{+}$ with the action $\boxplus$. Recall that the formula for the action $\boxplus$ is given by 
\begin{equation}\label{action1}
( y,t)\boxplus(a_{1},a_{2}):=( y+a_{1},\varphi(y)-\varphi(y+a_{1})+a_{2}+t).
\end{equation}

Let us recall the definition of the non-reduced $L^2$-cohomology of a group. Let $G$ be a locally compact, abelian, second countable, Hausdorff group. Let $Z^1(G,L^2(G))$ be the collection of Borel functions $g:G \to \mathbb{C}$ such that 
\[
\int |g(x+z)-g(x)|^{2}dx<\infty
\]
for every $z \in G$. Let $g \in Z^1(G,\mathbb{C})$. We say that $g$ is a coboundary
if $g=c+h$ for a scalar $c$ and for some $h \in L^2(G)$. The space of coboundaries will be denoted by $B^1(G,L^2(G))$. Set \[H^1(G,L^2(G)):=\frac{Z^1(G,L^2(G))}{B^1(G,L^2(G))}.\]
The vector space $H^1(G,L^2(G))$ is called the non-reduced $L^2$-cohomology of $G$.

   Let $g \in Z^1(\RO,L^2(\RO))$ be given.  For $a:=(a_1,a_2) \in \Omega$, define $w^{g}_{a}:A \to \mathbb{C}$ by
     \begin{equation}
        w^{g}_{a}((y,t))=g(y+a_{1})-g(y).
     \end{equation}
     We claim that $w^{g}_{a} \in \mathcal{L}_{A}$. Note that for $n \geq 1$, 
     \[
     \int_{A\setminus (A\boxplus (0,n))}|w_a^{g}(y,t)|^{2}dydt=n \int_{\RO}|g(y+a_{1})-g(y)|^{2} dy<\infty
     \]
     Since $\{(0,n)\}_{n\geq 1}$ is a cofinal set, $w_a^{g} \in \mathcal{L}_A$.
         Clearly, the map $w^g:\Omega \to \mathcal{L}_A$ is a $1$-cocycle. 
  It can be verified  that $w^{g}\in B^{1}(\Omega, \mathcal{L}_{A})$ if and only if $g \in B^{1}(\RO), L^{2}(\RO))$. 

Let us record what we have done above as a proposition.
\begin{prop}
\label{embedding group cohomology}
    The map $Z^1(\RO,L^2(\RO)) \ni g \to w^g \in Z^1(\Omega,\mathcal{L}_A)$ descends to an injective homomorphism 
    \[
    H^1\Big(\RO,L^2\big(\RO\big)\Big)\ni [g] \to [w^g] \in H^1(\Omega,\mathcal{L}_A).
    \]
\end{prop}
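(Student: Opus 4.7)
The map $g\mapsto w^g$ is manifestly linear, so it suffices to show that coboundaries map to coboundaries and that the induced map on cohomology is injective. For well-definedness, suppose $g=c+h$ with $c\in\bbc$ and $h\in L^2(\RO)$. Define $f\colon A\to\bbc$ by $f(y,t):=h(y)$. A direct inversion of Eq.~\eqref{action1} shows that, for $b=(b_1,b_2)\in\Omega$,
\[
A\setminus(A\boxplus b)=\{(y,t)\in A:0\le t<\varphi(y-b_1)-\varphi(y)+b_2\},
\]
and by Remark~\ref{bdd} this set is contained in $\RO\times[0,K_b]$ for some constant $K_b$. Hence $\int_{A\setminus A\boxplus b}|f|^2\le K_b\|h\|_{L^2}^2<\infty$, so $f\in\mathcal{L}_A$, and a direct computation using Eq.~\eqref{action1} confirms $w^g_a=f(\cdot\boxplus a)-f(\cdot)$ for every $a\in\Omega$.

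For injectivity, suppose $w^g\in B^1(\Omega,\mathcal{L}_A)$ and let $f\in\mathcal{L}_A$ satisfy $w^g_a=f(\cdot\boxplus a)-f(\cdot)$ for all $a\in\Omega$. Since the basis vector $v_d$ lies in $\Omega$, one has $(0,s)\in\Omega$ for every $s>0$, and the coboundary identity with $a=(0,s)$ collapses to $f(y,t+s)=f(y,t)$ for a.e.\ $(y,t)\in A$. A Fubini argument applied to the jointly Borel function $(y,t,s)\mapsto f(y,t+s)-f(y,t)$ on $A\times(0,\infty)$ then produces a Borel $\tilde f\colon\RO\to\bbc$ with $f(y,t)=\tilde f(y)$ for a.e.\ $(y,t)\in A$. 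Taking $b=(0,1)\in\Omega$ one computes $A\setminus(A\boxplus b)=\RO\times[0,1)$, so
\[
\int_{\RO}|\tilde f(y)|^2\,dy=\int_{A\setminus A\boxplus b}|f|^2<\infty,
\]
whence $\tilde f\in L^2(\RO)$.

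Substituting $f(y,t)=\tilde f(y)$ in the coboundary identity for a general $a=(a_1,a_2)\in\Omega$ and simplifying via Eq.~\eqref{action1} yields $(g-\tilde f)(y+a_1)=(g-\tilde f)(y)$ for a.e.\ $y\in\RO$ and every $a_1\in\eta(\Omega_0)$. Since $\eta(\Omega_0)$ is an open spanning subsemigroup of $\RO$, Lemma~\ref{lemma1} (whose proof applies verbatim in this setting) produces a constant $c\in\bbc$ with $g-\tilde f=c$ a.e.; hence $g=c+\tilde f\in B^1(\RO,L^2(\RO))$, as required. The only genuinely delicate step is the Fubini deduction that $f$ is essentially $t$-independent: the identity $f(y,t+s)=f(y,t)$ holds for each $s>0$ off a null set that a priori depends on $s$, and one must carefully assemble a joint null set to extract the representative $\tilde f$; all other verifications are routine consequences of Eq.~\eqref{action1} and Remark~\ref{bdd}.
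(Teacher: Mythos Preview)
Your proof is correct. The paper does not actually give a proof of this proposition---it merely asserts ``It can be verified that $w^{g}\in B^{1}(\Omega, \mathcal{L}_{A})$ if and only if $g \in B^{1}(\RO, L^{2}(\RO))$'' and records it as a statement. Your argument fills in precisely these details, and the method you use for injectivity (exploiting the one-parameter direction $(0,s)\in\Omega$ to force $t$-independence of $f$, then reading off square-integrability from a single slice $A\setminus A\boxplus(0,1)$, then invoking the translation-invariance lemma) is exactly the technique the paper deploys in the adjacent Lemma~\ref{reducing to the boundary}. One minor remark: when you invoke Lemma~\ref{lemma1} at the end, you are applying it with the ``$\Omega$-space'' equal to the whole group $\RO$, which strictly speaking violates the paper's standing hypothesis $A\neq X$; but as you note, the proof of Lemma~\ref{lemma1} goes through verbatim in that case (indeed it is easier), so this is harmless.
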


     Keep the foregoing notation.
 \begin{lemma}\label{reducing to the boundary} 
 Let $u \in Z^{1}(\Omega , \mathcal{L}_{A})$. Suppose  there exists a Borel map $f:A \to \mathbb{C}$ such that for any $a\in \Omega$, $u_{a}(x)=f(x+a)-f(x)$ for almost all $x \in A$. Then, there exists  $g \in Z^1(\RO,L^2(\RO))$ such that 
       $u$ is cohomologous to the cocycle $w^{g}$.
   \end{lemma}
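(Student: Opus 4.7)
The plan is to reduce the problem to the $1$-parameter case via the vertical subsemigroup $T := \{(0,t): t > 0\} \subseteq \Omega$, whose action on $A = \RO \times \mathbb{R}^{+}$ is pure vertical translation $(y,s) \boxplus (0,t) = (y, s+t)$. The restricted cocycle $v_{t} := u_{(0,t)}$ is a $1$-parameter cocycle with values in $\mathcal{L}_{A}$, and the associated isometric representation on $L^{2}(A)$ is the pure shift with coefficient Hilbert space $L^{2}(\RO)$. The subfamily of seminorms $\{\|\cdot\|_{(0,t)}\}_{t > 0}$ defining $\mathcal{L}_{A}$ coincides with the $L^{2}_{\mathrm{loc}}$-topology governing the $\mathcal{L}_{V}$-space of this pure shift, so Theorem 5.3.2 of \cite{arveson}, combined with the isomorphism $H^{1} \cong H^{2}$ from Prop.~\ref{equiv}, produces $h \in \mathcal{L}_{A}$ satisfying
\[
v_{t}(y,s) = h(y, s+t) - h(y,s)
\]
for almost every $(y,s) \in A$ and every $t > 0$.

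Combined with the hypothesis $v_{t}(y,s) = f(y,s+t) - f(y,s)$, this forces the difference $f - h$ to be invariant under every vertical translation in the a.e.\ sense. A standard Fubini argument then makes $f - h$ almost everywhere independent of the second coordinate, producing a Borel function $g : \RO \to \mathbb{C}$ with $f(y,t) = g(y) + h(y,t)$ a.e. Substituting this decomposition into $u_{a} = \delta f(a)$ and using that the first coordinate of $(y,t) \boxplus a$ equals $y + a_{1}$ gives
\[
u_{a} = w^{g}_{a} + \delta h(a),
\]
so $u - w^{g} = \delta h \in B^{1}(\Omega, \mathcal{L}_{A})$ since $h \in \mathcal{L}_{A}$.

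It remains to verify $g \in Z^{1}(\RO, L^{2}(\RO))$. Since $u$ and $\delta h$ both lie in $Z^{1}(\Omega, \mathcal{L}_{A})$, so does $w^{g}$; evaluating the seminorm $\|w^{g}_{a}\|_{(0,1)}^{2} = \int_{\RO} |g(y+a_{1}) - g(y)|^{2}\, dy$ gives finiteness for every $a = (a_{1}, a_{2}) \in \Omega$, hence for every $a_{1}$ in the projected cone $\Omega_{0} \subset \mathbb{R}^{d-1}$. Because $\eta(\Omega_{0}) - \eta(\Omega_{0}) = \RO$, every $z \in \RO$ decomposes as a difference of elements of $\eta(\Omega_{0})$, and the translation invariance of the Haar measure together with Minkowski's inequality extend the $L^{2}$-bound on increments to every $z \in \RO$.

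The main obstacle is the careful invocation of Arveson's $1$-parameter theorem: one must identify the topology on $\mathcal{L}_{A}$ induced by the subsystem of seminorms $\{\|\cdot\|_{(0,t)}\}$ with the $\mathcal{L}_{V}$-topology of the pure shift on $L^{2}(A)$, so that Theorem 5.3.2 of \cite{arveson} genuinely produces an element $h$ inside our $\mathcal{L}_{A}$ rather than a merely Borel function (which is all that the elementary Fubini-and-patching argument of Prop.~\ref{H=0} would yield). Once this identification is secured, the remaining manipulations---extracting the $t$-independent piece, recording the cohomological identity $u = w^{g} + \delta h$, and transferring the $L^{2}$-estimate from $\Omega_{0}$ to all of $\RO$---are routine.
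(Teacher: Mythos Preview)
Your proof is correct and follows essentially the same route as the paper: restrict to the vertical one-parameter subsemigroup $\{(0,t):t>0\}$, invoke Arveson's Theorem 5.3.2 to obtain an element of $\mathcal{L}_A$ (your $h$, the paper's $g_0$), compare with $f$ via Fubini to extract the $t$-independent piece $g$, and verify the $L^2$-increment condition on $g$ by first handling $\Omega_0$ and then extending by differences. Your presentation is slightly more careful in flagging the topology identification needed to apply Arveson's theorem, and you obtain the $L^2$-bound on $L_z g - g$ by observing $w^g \in Z^1(\Omega,\mathcal{L}_A)$ and reading off the seminorm $\|w^g_a\|_{(0,1)}$, whereas the paper computes the same integral directly; these are cosmetic differences only.
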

  \begin{proof} 
   Consider the one parameter cocycle $\widetilde{u}$  defined by $\widetilde{u}_{s}=u_{(0,s)}$ for $s>0$. By Thm. 5.3.2 of \cite{arveson}, there exists  $g_{0}\in\mathcal{L}_{A}$ such that for any $s>0$,
 \begin{equation*}
      \widetilde{u}_{s}(y,t)= g_{0}((y,t)\boxplus (0,s))-g_{0}(y,t) 
  \end{equation*} for almost every $(y,t)\in A$.
  Note that $(y,t)\boxplus (0,s)=(y,s+t)$.
Hence, for every $s>0$,
  \begin{equation*}
      f(y,t+s)-f(y,t)=g_{0}(y,t+s)-g_{0}(y,t)
  \end{equation*}
 for almost every $(y,t)\in A$, i.e. for any $s>0$,
  \begin{equation*}
     (f-g_{0})(y,t+s)=(f-g_{0})(y,t)
 \end{equation*} for almost every $(y,t)\in A$. 
Applying Fubini,
 for almost every $(y,t)\in A$,
  \begin{equation*}
     (f-g_{0})(y,t+s)=(f-g_{0})(y,t)
 \end{equation*} for almost every $s>0$.
 Hence, there exists a Borel measurable function $g:\RO\to\mathbb{C}$ such that
 \begin{equation*}
     (f-g_{0})(y,t)=g(y).
 \end{equation*} for almost every $(y,t)\in A$.
 \item It suffices to show that for any $z\in\R^{d-1}$,
\begin{equation*}
\int_{\RO}\int_{0}^{n}|g(y+z)-g(y)|^{2}dt dy<\infty
\end{equation*}

To that end, it is sufficient to prove that 
 \begin{equation*}
\int_{\RO}\int_{0}^{n}|g(y+a_{1})-g(y)|^{2}dt dy<\infty
\end{equation*} for every $a_{1}\in\Omega_{0}$, since any $z\in\R^{d-1}$ can be written as a difference of two  elements in $\Omega_{0}$.
Let $a_{1}\in\Omega_{0}$. Let $a_{2}\in\R$ be such that $(a_{1},a_{2})\in\Omega$.
Then,
\begin{align*}
&\int_{\RO}\int_{0}^{n}|g(y+a_{1})-g(y)|^{2}dt dy\\
&=\int_{A\setminus A\boxplus (0,n)}|(f-g_{0})(y,t)\boxplus (a_{1},a_{2}))-(f-g_{0})(y,t)|^{2}dydt<\infty .
\end{align*} %Now the proof follows.

  Since $g_{0}\in \mathcal{L}_{A}$,  $u$ and $w^{g}$ are cohomologous. The proof is complete. 
  \end{proof}
  
  We denote the linear span of $H$ by $L(H)$. We denote the codimension of $L(H)$  in $\R^{d}$ by $codim(H)$. As $A$ is assumed to be a proper subset of $\frac{\mathbb{R}^{d}}{H}$, $codim(H)>0$. 

\begin{remark}
\label{measure of lshape}
  Let $B\subset \frac{\mathbb{R}^{d}}{H} $ be an $\Omega$-space. Let us  recall a couple of facts from \cite{piyasa}.
 \begin{enumerate}
     \item For any $a\in\Omega$, $B\setminus B+a$ has finite measure if and only if  $H$ has codimension $1$.
     \item  If $codim(H)=1$, then the boundary of $B$, $\partial B$ is compact.
     
 \end{enumerate}We refer the reader to Proposition 3.4 and Theorem 4.5 of \cite{piyasa} for a proof of the above statements.

 An alternative way to see the above facts is given below. 
 
 First, observe that for  a real number $r$, $(0,r) \in H$ if and only $r=0$. This follows from Eq. \ref{phi}. Consequently, the map
 \[
 H \ni (h_1,h_2) \to h_1 \in H_0 \]
 is an isomorphism. Hence, $span(H)$ and $span(H_0)$ have the same dimension. 
 
 We can assume the action to be $\boxplus$ and $B=\RO \times [0,\infty)$. Since $\{(0,n):n \geq 1\}$ is cofinal, observe that $B\setminus(B\boxplus(a_1,a_2))$ has finite measure for every $(a_1,a_2) \in \Omega$ if and only if $B \setminus (B \boxplus (0,n))=\RO \times [0,n)$ has finite measure for every $n$. Consequently, $B\setminus(B\boxplus(a_1,a_2))$ has finite measure for every $(a_1,a_2) \in \Omega$ if and only if $\RO$ has finite measure, i.e. if $\RO$ is compact. This happens if and only if the linear span of $H_0$ is $\R^{d-1}$ and the latter happens if and only if $H$ has codimension one. 

We already saw that the boundary of $B$ can be identified with $\RO \times \{0\}$. Hence, the boundary of $B$ is compact if and only if $\RO$ is compact if and only if the linear span of $H_0$ is $\mathbb{R}^{d-1}$ if and only if $H$ has codimension one.

 \end{remark}
\begin{remark}\label{zero cohomology for compact groups} Suppose $H$ has codimension 1.  The cocycle $w^{g}$ is a coboundary for every $g \in Z^1(\RO,L^2(\RO))$. This is because if $H$ has codimension one, then $\RO$ is a compact group, and for a compact group $G$, $H^1(G,L^2(G))=0$ (see Section 1.4 of \cite{Guichardet}).
\end{remark}

Let $F(A,\mathbb{C})$ denote the set of all Borel maps from $A$ into $\mathbb{C}$. 
  For $\lambda_1,\lambda_2 \in \bbr^d$, define the map $ u^{\lambda_{1},\lambda_{2}}:\Omega\to F(A,\mathbb{C}) $ by
 \begin{equation*}
     u^{\lambda_{1},\lambda_{2}}_{a}(x):=\langle\lambda_{1}|a\rangle+i\langle\lambda_{2}|a\rangle.
 \end{equation*} 
\begin{theorem}\label{computation of cohomology group}
With the foregoing notation, we have the following. 
    \begin{enumerate}[(1)]
    \item Suppose the subgroup $H$ has codimension one. For $\lambda_{1},\lambda_{2}\in L(H)$, $u^{\lambda_{1},\lambda_{2}}\in Z^{1}(\Omega,\mathcal{L}_{A})$. Let $u\in Z^{1}(\Omega, \mathcal{L}_{A})$. Then, there exist $\lambda_{1},\lambda_{2}\in L(H)$ such that $u$ is cohomologous to $u^{\lambda_{1},\lambda_{2}}$.
       Moreover, the map
       \begin{equation*}
         (\lambda_{1},\lambda_{2}) \ni L(H)\oplus L(H) \to [u^{\lambda_{1},\lambda_{2}}] \in H^1(\Omega,\mathcal{L}_A)
     \end{equation*}
  is an isomorphism. Here, $L(H) \oplus L(H)$ is considered as the complexification of $L(H)$.
    \item Suppose $codim(H)\geq 2$.  Then, the map 
    \[
    H^1\Big(\RO,L^2\big(\RO\big)\Big)\ni [g] \to [w^g] \in H^1(\Omega,\mathcal{L}_A)\]
    is an isomorphism. 
    \end{enumerate}
    \end{theorem}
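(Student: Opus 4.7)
The plan is to exploit Proposition \ref{general form of a one cocycle}: every $u \in Z^{1}(\Omega,\mathcal{L}_A)$ admits a representation
\[
u_{a}(x) = f(x+a) - f(x) + \sigma(a)
\]
with $f:A\to\mathbb{C}$ Borel and $\sigma:\R^{d}\to\mathbb{C}$ a continuous homomorphism, so $\sigma(a)=\langle \lambda_1,a\rangle + i\langle \lambda_2,a\rangle$ for some $\lambda_1,\lambda_2\in\R^{d}$. By Remark \ref{sigma}, $\sigma|_{H}$ is an invariant of the cohomology class of $u$, while the extension of $\sigma|_H$ to $\R^{d}$ is free modulo homomorphisms vanishing on $H$. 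The two parts of the theorem will hinge on the dichotomy furnished by Remark \ref{measure of lshape}: nonzero constant functions lie in $\mathcal{L}_A$ precisely when $\mathrm{codim}(H)=1$.

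For Part (1), first observe that when $\mathrm{codim}(H)=1$ each $u^{\lambda_1,\lambda_2}$ lies in $Z^{1}(\Omega,\mathcal{L}_A)$ because its values are constants and $A\setminus(A+b)$ has finite measure. Given $u$, I would extend $\sigma|_{H}$ $\R$-linearly to $L(H)$ and by zero on $L(H)^{\perp}$; this produces a canonical pair $\lambda_1,\lambda_2\in L(H)$, and Remark \ref{sigma} supplies a Borel $f_1:A\to\mathbb{C}$ with $u_a - u^{\lambda_1,\lambda_2}_a = f_1(\cdot+a)-f_1(\cdot)$. Since $u-u^{\lambda_1,\lambda_2}\in Z^{1}(\Omega,\mathcal{L}_A)$, Lemma \ref{reducing to the boundary} yields $g\in Z^{1}(\RO,L^{2}(\RO))$ with $u - u^{\lambda_1,\lambda_2}$ cohomologous to $w^{g}$; since $\RO$ is compact under the codimension-one hypothesis, Remark \ref{zero cohomology for compact groups} forces $w^{g}$ to be a coboundary, yielding $u \sim u^{\lambda_1,\lambda_2}$.

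For Part (2), the crux is to show $\sigma|_{H}=0$ whenever $\mathrm{codim}(H)\geq 2$. Given $h\in H$, spanning of $\Omega$ lets me write $h=a-b$ with $a,b\in\Omega$; since $x+a$ and $x+b$ coincide in $\R^{d}/H$, the $f$-terms cancel almost everywhere and $u_{a}-u_{b}$ equals the constant $\sigma(h)$ a.e. But $u_a-u_b\in\mathcal{L}_A$, and by Remark \ref{measure of lshape} the set $A\setminus(A+c)$ has infinite measure for every $c\in\Omega$, which forces $\sigma(h)=0$. Remark \ref{sigma} now provides a representation of $u$ with $\sigma\equiv 0$, to which Lemma \ref{reducing to the boundary} applies to give $u \sim w^{g}$ for some $g\in Z^{1}(\RO,L^{2}(\RO))$; combined with the injectivity established in Proposition \ref{embedding group cohomology}, this completes the claim.

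Injectivity in Part (1) follows by the same mechanism: if $u^{\lambda_1,\lambda_2}_{a}=g(x+a)-g(x)$ with $g\in\mathcal{L}_A$, then for $h=a-b\in H$ the identification $x+a=x+b$ in $A$ forces $\sigma(h)=0$, so $\sigma|_H=0$; since $\lambda_1,\lambda_2\in L(H)$, this gives $\lambda_1=\lambda_2=0$. The chief obstacle is the measure-theoretic dichotomy distinguishing the two cases — once the fate of $\sigma|_H$ is pinned down by the finiteness behaviour of $|A\setminus(A+c)|$, the rest is a straightforward assembly of Proposition \ref{general form of a one cocycle}, Remark \ref{sigma}, Lemma \ref{reducing to the boundary}, Proposition \ref{embedding group cohomology}, and Remark \ref{zero cohomology for compact groups}.
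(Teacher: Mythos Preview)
Your proposal is correct and follows essentially the same approach as the paper's proof: both invoke Proposition \ref{general form of a one cocycle} and Remark \ref{sigma} to isolate the homomorphism $\sigma|_{H}$, then use the finite/infinite measure dichotomy of Remark \ref{measure of lshape} to determine whether $\sigma|_{H}$ survives (codimension one) or is forced to vanish (codimension $\geq 2$), and finally appeal to Lemma \ref{reducing to the boundary} together with Remark \ref{zero cohomology for compact groups} (Part 1) or Proposition \ref{embedding group cohomology} (Part 2). The injectivity argument in Part (1) via $\sigma(h)=\sigma(a)-\sigma(b)=0$ is also identical to the paper's.
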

    \begin{proof}
First consider the case when $codim(H)=1$.      Recall that $A\setminus A\boxplus a$ has finite measure for every $a\in \Omega$. So, for any  $\lambda_{1},\lambda_{2}\in L(H)$, $u^{\lambda_{1},\lambda_{2}}_{a}\in\mathcal{L}_{A}$. It is also clear that for $\lambda_1,\lambda_2 \in L(H)$,  the map $ u^{\lambda_{1},\lambda_{2}}$ is a cocycle.
 
 Let $u\in Z^{1}(\Omega,\mathcal{L}_{A})$. By Prop. \ref{general form of a one cocycle}, there exists a Borel map $f:A\to\mathbb{C}$ and a Borel homomorphism $\sigma:\R^{d}\to\mathbb{C}$ such that for $a\in\Omega$,
 \begin{equation*}
     u_{a}(x)=f(x+a)-f(x)+\sigma(a)
 \end{equation*} for almost every $x\in A$.
 Choose $\lambda_1,\lambda_2 \in L(H)$ such that $\sigma(a)=\langle \lambda_1|a \rangle + i \langle \lambda_2|a \rangle$ for $a \in H$. 
  By Remark \ref{sigma},  there exists a Borel map $\widetilde{f}:A\to \mathbb{C}$ such that for $a\in\Omega$,
 \begin{equation*}
   u_{a}(x)=  \widetilde{f}(x+a)-\widetilde{f}(x)+\langle \lambda_1|a \rangle+i\langle \lambda_2|a \rangle.
 \end{equation*} for almost every $x\in A$. 
 Thanks to Lemma \ref{reducing to the boundary}, $u-u^{\lambda_1,\lambda_2}$ is cohomologous to $w^g$ for some $g \in Z^1(\RO, L^2(\RO))$. By Remark \ref{zero cohomology for compact groups}, $w^g$ is a coboundary. Therefore, $u$ is cohomologous to $u^{\lambda_1,\lambda_2}$. This shows that the map
 \[
 L(H) \oplus L(H) \ni (\lambda_1,\lambda_2) \to [u^{\lambda_1,\lambda_2}] \in H^1(\Omega,\mathcal{L}_A)
 \]
 is onto. 
    
Suppose there exist $\lambda_{1},\lambda_{2} \in L(H)$  such that $u^{\lambda_{1},\lambda_{2}}$ is a coboundary. Then, there exists $f \in \mathcal{L}_A$ such that for $a \in \Omega$,
\[
\sigma(a)=f(x+a)-f(x)
\]
for almost all $x \in A$. Here, $\sigma(a)=\langle \lambda_1|a \rangle+i\langle \lambda_2|a \rangle$. 

Let $h \in H$ be given. Write $h=a-b$ with $a,b \in \Omega$ and calculate as follows to observe that for almost all $x \in A$,
\[
\sigma(a)=f(x+a)-f(x)=f(x+b)-f(x)=\sigma(b).
\]
Hence, $\sigma(h)=0$ for every $h \in H$. This forces that $\lambda_1=\lambda_2=0$. 
Consequently, the map 
\[
 L(H) \oplus L(H) \ni (\lambda_1,\lambda_2) \to [u^{\lambda_1,\lambda_2}] \in H^1(\Omega,\mathcal{L}_A)
 \]
 is one-one. This completes the proof of $(1)$.

    Now suppose $codim(H)\geq 2$. We have already seen in Prop. \ref{embedding group cohomology} that the map 
    \[
    H^1\Big(\RO,L^2\big(\RO\big)\Big) \ni [g] \to [w^g] \in H^1(\Omega,\mathcal{L}_A)
    \]
    is injective. 

    Let $u \in Z^1(\Omega,\mathcal{L}_A)$ be given. By Prop. \ref{general form of a one cocycle}, there exists a Borel map $f:A \to \mathbb{C}$ and a Borel homomorphism $\sigma:\mathbb{R}^d \to \mathbb{C}$ such that for $a \in \Omega$,
    \[
    u_a(x)=\sigma(a)+f(x+a)-f(x)
    \]
    for almost all $x \in A$.     Observe that for $h\in H$, $\sigma(h)=u_{a}(x)-u_{b}(x)$ for almost every $x\in A$, whenever $h=a-b$ for $a,b\in \Omega$. So the constant function $\sigma(h)$ must be integrable on $A\setminus A+ d$ for every $d\in\Omega$. But when $codim(H)\geq 2$, by Remark \ref{measure of lshape}, there exists $d \in \Omega$, such that $A\setminus (A+d)$ has infinite measure. So, $\sigma|_{H}=0$. Again, by Remark \ref{sigma}, we may assume $\sigma$ is identically zero on $\R^{d}$. Hence for $a\in\Omega$, 
 \begin{equation*}
   u_{a}(x)=  f(x+a)-f(x)
 \end{equation*} for almost every $x\in A$. By Lemma \ref{reducing to the boundary}, there exists $g \in Z^1(\RO,L^2(\RO))$ such that   $u$ is cohomologous to $w^{g}$. This completes the proof of the surjectivity of the map 
 \[
    H^1\Big(\RO,L^2\big(\RO\big)\Big) \ni [g] \to [w^g] \in H^1(\Omega,\mathcal{L}_A).
    \]
    The proof is complete.     \end{proof}
 
%\subsection{Computation of $H_{ad}^{1}(\Omega,\mathcal{L}_{A})$} 
 Let $A=\RO\times \R^{+}$ with the action $\boxplus$. Let $V:=V^A=\{V_a\}_{a \in P}$ be the shift semigroup associated to $A$.
For $u\in Z^{1}(\Omega,\mathcal{L}_{A})$, we denote the extension of $u$ to $P$
again by $u$. Let $\Gamma^u$ be the associated $2$-cocycle considered in Section 3. 
Define $T^{u}:P \times P\times P\to\R$ by
 \begin{equation}\label{T}
  T^{u}(a,b,c):=  Im\langle\Gamma^{u}(a,b+c)|V_{b}\Gamma^{u}(b,c)\rangle.
 \end{equation} 
 Recall that the cocycle $u$ is admissible if and only if there exists a measurable map $\alpha:P \times P \to\mathbb{T}$
 such that
 \begin{equation}\label{alpha}
    e^{iT^{u}(a,b,c)}=\frac{\alpha(a,b)\alpha(a+b,c)}{\alpha(a,b+c)\alpha(b,c)}
\end{equation}
 for every $a,b,c\in P$.

 The space of admissible cocycles will be denoted $Z^1_{a}(\Omega,\mathcal{L}_A)$ and its image in $H^1(\Omega,\mathcal{L}_A)$ will be denoted by $H^1_{a}(\Omega,\mathcal{L}_A)$.
 We will denote $\Gamma^{u}$ by $\Gamma$,  and $T^{u}$ by $T$ when the cocycle $u$ is clear. 
 
We will address the question of admissibility  of cocycles  when $codim(H)=1$ and $codim(H)\geq 2$ separately.

\begin{remark}
\label{anstisymmetric trick}
Let $X$ be a non empty set, and let  $F:X\times X\times X\to \R$ be a map. Recall that the anti-symmetric part of $F$, denoted by $F_{*}$, is defined by
\begin{align*}
F_{*}(x_{1},x_{2},x_{3})=&\frac{1}{6}\sum_{\sigma\in S_{3}}sgn(\sigma) F(x_{\sigma(1)},x_{\sigma(2)},x_{\sigma(3)})
\end{align*} for $(x_{1},x_{2},x_{3})\in X\times X\times X$.

 Suppose $S$ is an abelian semigroup. Let $F:S\times S\times S\to \R$ be a map. Suppose there exists a function $\beta :S\times S\to \R $ such that for any  $(a,b,c)\in S\times S \times S$, \begin{equation*}F(a,b,c)=\beta(a,b)+\beta(a+b,c)-\beta(a,b+c)-\beta(b,c).
 \end{equation*}
 Then, $F_{*}=0$.
 The proof is a straightforward computation. 
 \end{remark}

 \begin{theorem}
 \label{H1 codim one}
 Suppose $codim(H)=1$. Let $\gamma_{1},\gamma_{2} \in L(H)$. The cocycle $u^{\gamma_{1},\gamma_{2}}$ is admissible if and only if $\gamma_{1}$ and $\gamma_{2}$ are linearly dependent.    
    Therefore,
    \begin{equation*}
    H_{a}^{1}(\Omega,\mathcal{L}_{A})=\{[u^{\gamma_{1},\gamma_{2}}]:\textrm{ $\gamma_{1},\gamma_{2}\in L(H)$, $\gamma_{1},\gamma_{2}$ are linearly dependent}\}.
    \end{equation*}
    \end{theorem}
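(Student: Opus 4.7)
The plan is to compute $T := T^{u^{\gamma_1,\gamma_2}}$ explicitly, and then run an antisymmetrization trick against the structure of $\mu(c) := m(A\setminus(A+c))$. Since $u^{\gamma_1,\gamma_2}_a(x) = c_a := \langle\gamma_1|a\rangle + i\langle\gamma_2|a\rangle$ is constant in $x$, the correspondence of Proposition \ref{equiv} between $1$-cocycles and $2$-cocycles identifies $\Gamma^u(a,b) = c_a\,\mathbf{1}_{A\setminus(A+b)}$ (up to conjugation). This is square-integrable because $\mathrm{codim}(H)=1$ forces $\mu(c)<\infty$ (Remark \ref{measure of lshape}). A short computation using translation invariance of Haar measure (together with $(A+b)\setminus(A+b+c)\subset A\setminus(A+b+c)$) yields
\[
T(a,b,c) \;=\; \omega(a,b)\,\mu(c), \qquad \omega(a,b) := \langle\gamma_2|a\rangle\langle\gamma_1|b\rangle - \langle\gamma_1|a\rangle\langle\gamma_2|b\rangle,
\]
up to an overall sign that does not affect admissibility. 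The easy direction is immediate: if $\gamma_1,\gamma_2$ are linearly dependent, then $\omega\equiv 0$, so $T\equiv 0$, and $\alpha\equiv 1$ witnesses admissibility.

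For the converse, assume $\gamma_1,\gamma_2$ are linearly independent and suppose toward a contradiction that $u^{\gamma_1,\gamma_2}$ is admissible via a Borel map $\alpha:P\times P\to\bbt$. Choose a Borel lift $\alpha = e^{i\beta}$ with $\beta:P\times P\to\R$; then $T-\delta\beta$ takes values in $2\pi\bbz$. The key step is the antisymmetrization operator $F\mapsto F_*$ of Remark \ref{anstisymmetric trick}: the coboundary piece vanishes identically ($(\delta\beta)_*=0$), whence $T_* = (T-\delta\beta)_*$. The right-hand side is Borel and valued in the discrete set $\tfrac{\pi}{3}\bbz$ (an average of six integer multiples of $2\pi$, divided by six), while the left-hand side is continuous, because $\mu$ is additive on $P$ by translation invariance of Haar measure, hence Borel and additive, and therefore linear and continuous by the standard extension argument. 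Since $P^3$ is connected and $T_*(0,0,0)=0$, this forces $T_*\equiv 0$ on $P^3$, and by trilinearity on all of $(\R^d)^3$.

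Finally, unpacking $T_*\equiv 0$ (a direct symmetrization) yields, for all $a,b,c\in\R^d$,
\[
\omega(a,b)\mu(c) - \omega(a,c)\mu(b) + \omega(b,c)\mu(a) \;=\; 0.
\]
Let $\lambda\in\R^d$ represent $\mu$ under the inner product. Since $A+h=A$ for every $h\in H$, $\mu$ vanishes on $L(H)$, so $\lambda\in L(H)^\perp$; moreover $\lambda\ne 0$ as $A$ is a proper subset of $\R^d/H$. Substituting $a=\lambda,\,b=\gamma_1,\,c=\gamma_2$ and using $\langle\lambda|\gamma_i\rangle=0$ forces $\omega(\lambda,\gamma_i)=0$ and $\mu(\gamma_i)=0$, so the identity collapses to $\omega(\gamma_1,\gamma_2)\,|\lambda|^2=0$. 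But $\omega(\gamma_1,\gamma_2) = \langle\gamma_1,\gamma_2\rangle^2 - |\gamma_1|^2|\gamma_2|^2 < 0$ by strict Cauchy--Schwarz (using linear independence of $\gamma_1,\gamma_2$) and $|\lambda|^2>0$, a contradiction. The stated description of $H^1_a(\Omega,\mathcal{L}_A)$ is then immediate from Theorem \ref{computation of cohomology group}. The hardest point is the lifting--plus--antisymmetrization step: one must combine the purely algebraic vanishing of $(\delta\beta)_*$ with continuity of $T_*$ and connectedness of $P^3$ to overcome the fact that the lift $\beta$ is only Borel.
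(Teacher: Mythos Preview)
Your proof is correct and follows essentially the same route as the paper: compute $T(a,b,c)=\pm\omega(a,b)\mu(c)$, lift $\alpha=e^{i\beta}$, antisymmetrize to kill the $\delta\beta$ contribution, and use continuity together with the discreteness of $\tfrac{\pi}{3}\bbz$ to force $T_*\equiv 0$. The only cosmetic difference is in the endgame: the paper recognises $6T_*(a,b,c)$ as the $3\times 3$ determinant with rows $\langle\gamma_i|\cdot\rangle$ ($i=1,2,3$, where $\gamma_3$ is your $\lambda$) and concludes that $\{\gamma_1,\gamma_2,\gamma_3\}$ is linearly dependent, whence $\gamma_1,\gamma_2$ are dependent since $\gamma_3\in L(H)^{\perp}$ and $\gamma_1,\gamma_2\in L(H)$; you instead expand the cofactor identity and substitute $(a,b,c)=(\lambda,\gamma_1,\gamma_2)$ to reach $\omega(\gamma_1,\gamma_2)\,|\lambda|^{2}=0$ and invoke strict Cauchy--Schwarz. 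These are the same argument unpacked differently.
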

\begin{proof}
Let $\gamma_1,\gamma_2 \in L(H)$ be given. Let $\Gamma$ be the $2$-cocycle that corresponds to $u^{\gamma_1,\gamma_2}$. Then, for $a,b \in P$, 
  $\Gamma(a,b)=(\langle\gamma_{1}|a\rangle+i\langle \gamma_{2}|a\rangle)1_{A\setminus A\boxplus b}$.
  Define $T:P \times P \times P \to \mathbb{R}$ by 
 \[  T(a,b,c):=  Im\langle\Gamma(a,b+c)|V_{b}\Gamma(b,c)\rangle.\]
 If $\gamma_1$ and $\gamma_2$ are linearly dependent, it is clear that $T=0$. Consequently, $u^{\gamma_1,\gamma_2}$ is admissible. 

 Conversely, assume that $u^{\gamma_1,\gamma_2}$ is admissible. Note that  
\begin{equation*}
  T(a,b,c)
  =Im \int_{A\boxplus b\setminus A\boxplus (b+ c)}(\langle\gamma_{1}|a\rangle \langle\gamma_{2}|b\rangle-\langle\gamma_{1}|b\rangle\langle\gamma_{2}|a\rangle) dx
\end{equation*}

Define $\rho:\Omega\to\mathbb{R}$ by \begin{equation*}\rho(a)=\langle 1_{A\setminus A\boxplus a}|1_{A\setminus A\boxplus a}\rangle.\end{equation*}  Observe that $\rho$ is a non-zero Borel semigroup homomorphism.
So, $\rho$ extends to a measurable homomorphism from $\mathbb{R}^{d}$ to $\R$. Denote the extension by $\rho$ as well. Note that $\rho:\R^{d}\to\R$ is  continuous, hence $\rho$ is a linear functional. Let $\gamma_{3}\in\R^{d}$ be such that $\rho(a)=\langle\gamma_{3}|a\rangle$ for each $a\in\R^{d}$.
We claim that $\gamma_{3}\in  L(H)^{\perp}$. Let $h\in H$. Suppose $h=a-b$ for $a,b\in\Omega$. Then, $\rho(a)=\rho(b)$ by the definition of $\rho$. So, $\langle\gamma_{3}|h\rangle=0$. Hence the claim is proved.

Notice that
\begin{equation*}
T(a,b,c)=\langle\gamma_{3}|c\rangle(\langle\gamma_{1}|a\rangle \langle\gamma_{2}|b\rangle-\langle\gamma_{1}|b\rangle\langle\gamma_{2}|a\rangle) .
\end{equation*} for $a,b,c\in\Omega$.

Suppose there exists a Borel map $\alpha: P \times P \times P \to\mathbb{T}$ such that
 \begin{equation}\label{eqn1}
\frac{\alpha(a,b)\alpha(a+b,c)}{\alpha(a,b+c)\alpha(b,c)}=e^{i T(a,b,c)}
\end{equation} for $a,b,c\in P$. Let $\beta:P \times P\to\R$ be a map such that 
\begin{equation*}
e^{i\beta(a,b)}=\alpha(a,b)
\end{equation*}for each $(a,b)\in P\times P$. Then, there exists a map $K: P \times P \times P \to\mathbb{Z}$ such that
\begin{align}\label{eq5} 
T(a,b,c)=\beta(a,b)+\beta(a+b,c)-
\beta(a,b+c)-\beta(b,c)+2\pi K(a,b,c)
\end{align} for each $(a,b,c)\in P \times P \times P$.

 Let $W:P \times P \times P \to\R$ be the map defined \begin{equation*}W(a,b,c)=\beta(a,b)+\beta(a+b,c)-
\beta(a,b+c)-\beta(b,c).\end{equation*}
By Remark \ref{anstisymmetric trick}, the anti-symmetric part of $W$, i.e. $W_{*}=0$.
Hence, for $a,b,c \in P$, $T_{*}(a,b,c)=2\pi K_{*}(a,b,c)\in \frac{\pi}{3}\mathbb{Z}$. 
Observe that
\begin{align}\label{det}
   T_{*}(a,b,c)=\frac{1}{6}Det \begin{pmatrix}
\langle\gamma_{1}|a\rangle & \langle\gamma_{1}|b\rangle & \langle\gamma_{1}|c\rangle\\
\langle\gamma_{2}|a\rangle & \langle\gamma_{2}|b\rangle& \langle\gamma_{2}|c\rangle\\
\langle\gamma_{3}|a\rangle & \langle\gamma_{3}|b\rangle& \langle\gamma_{3}|c\rangle
\end{pmatrix}
\end{align}
Since $T_{*}$ is a continuous function on $P \times P \times P $  which approaches $0$ as $(a,b,c)\to 0$ and $T_*(a,b,c) \in \frac{\pi}{3}\mathbb{Z}$, $T_{*}=0$.  So, $\gamma_{1},\gamma_{2},\gamma_{3}$ are linearly dependent. Since $\gamma_{3}\in L(H)^{\perp}$, and since $\gamma_1,\gamma_2 \in L(H)$, $\gamma_{1}$ and $\gamma_{2}$ are linearly dependent.

Now the equality  \begin{equation*}
    H_{a}^{1}(\Omega,\mathcal{L}_{A})=\{[u^{\gamma_{1},\gamma_{2}}]:\textrm{ $\gamma_{1},\gamma_{2}\in L(H)$, $\gamma_{1},\gamma_{2}$ are linearly dependent}\}
    \end{equation*}
   follows from Thm. \ref{computation of cohomology group} and from Prop. \ref{admissibility of cohomologous cocycles}.

 \end{proof}

\textbf{Notation:} For a measurable function $g$ on $\RO$ and for $z \in \RO$, we 
let $L_zg$ be the function defined by
\[
L_zg(y)=g(y+z).
\]

\begin{theorem}\label{H1 codim greater than 1}
     Suppose $codim(H)\geq 2$. Then, every $u\in Z^{1}(\Omega, \mathcal{L}_{A})$ is admissible, and 
    \begin{equation*}H_{a}^{1}(\Omega,\mathcal{L}_{A})=H^{1}\Big(\RO,L^{2}\big(\RO\big)\Big).
    \end{equation*}
\end{theorem}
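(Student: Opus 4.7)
The plan is to combine Proposition \ref{admissibility of cohomologous cocycles} with the identification of $H^{1}(\Omega,\mathcal{L}_{A})$ in Theorem \ref{computation of cohomology group}(2) in order to reduce the problem to an explicit computation on representatives of the form $w^{g}$. Indeed, by Proposition \ref{admissibility of cohomologous cocycles} admissibility depends only on the cohomology class, and by Theorem \ref{computation of cohomology group}(2) every class in $H^{1}(\Omega,\mathcal{L}_{A})$ is realized by some $w^{g}$ with $g\in Z^{1}(\RO,L^{2}(\RO))$. Hence the theorem reduces to proving that each such $w^{g}$ is admissible, after which the stated equality $H^{1}_{a}(\Omega,\mathcal{L}_{A})=H^{1}(\RO,L^{2}(\RO))$ is automatic.

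Fix $g$ and write $\Gamma:=\Gamma^{w^{g}}$. In the $\RO\times\mathbb{R}^{+}$ model the cocycle $\Gamma(x,c)$ is supported on $A\setminus A\boxplus c$ and, up to the convention identifying $\mathcal{L}_{A}$ with $\mathcal{L}_{V^{A}}$, equals the pullback of $F_{x_{1}}(y):=g(y+x_{1})-g(y)$ there. The first technical step I would perform is to simplify $T(a,b,c):=Im\langle\Gamma(a,b+c)\mid V_{b}\Gamma(b,c)\rangle$: substituting the 2-cocycle identity $V_{b}\Gamma(b,c)=\Gamma(a,b)+V_{b}\Gamma(a+b,c)-\Gamma(a,b+c)$ into the right slot, decomposing $\Gamma(a,b+c)=\Gamma(a,b)+V_{b}V_{b}^{*}\Gamma(a,b+c)$ via coherence, and exploiting the orthogonality $Ker(V_{b}^{*})\perp V_{b}(\mathcal{H})$, three of the four resulting terms vanish; a further use of the cocycle identity to compute $V_{b}^{*}\Gamma(a,b+c)=\Gamma(a+b,c)-\Gamma(b,c)$ then collapses the remaining term into the clean form
\begin{equation*}
T(a,b,c)=Im\langle\Gamma(a+b,c)\mid\Gamma(b,c)\rangle=Im\int_{\RO}\nu_{c}(y)\,F_{(a+b)_{1}}(y)\,\overline{F_{b_{1}}(y)}\,dy,
\end{equation*}
where $\nu_{c}(y)=\varphi(y-c_{1})-\varphi(y)+c_{2}$ is the length of the $t$-slice of $A\setminus A\boxplus c$ at $y$.

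It then remains to exhibit a Borel map $\beta:P\times P\to\mathbb{R}$ with $\delta^{2}\beta=T$ (as an additive real-valued 3-cochain), whereupon $\alpha:=e^{i\beta}$ verifies the admissibility Equation \ref{admissibility}. Writing $F_{(a+b)_{1}}=F_{a_{1}}+L_{a_{1}}F_{b_{1}}$ and using the bilinearity and antisymmetry of $Im\int\nu_{c}(\cdot)\overline{(\cdot)}\,dy$, I would split $T=T_{1}+T_{2}$ with $T_{1}$ antisymmetric in $(a,b)$ and $T_{2}$ of the form $Im\int\nu_{c}L_{a_{1}}F_{b_{1}}\overline{F_{b_{1}}}\,dy$; then seek $\beta(a,b)=Im\int K(y;a,b)\,dy$ for a $K$ quadratic in the translates of $g$, tuned so that the combined identities $\nu_{b+c}=\nu_{b}+L_{-b_{1}}\nu_{c}$ and $F_{b+c}=F_{b}+L_{b_{1}}F_{c}$ conspire to give $\delta^{2}\beta=T$.

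The main obstacle is precisely this explicit construction: because $\nu_{c}$ depends on $c_{1}$ nonlinearly through $\varphi$, a linear-in-$c$ ansatz is insufficient, and one must carefully balance the shifted $\nu$-terms against the shifted $F$-terms using only the two cocycle identities above. An alternative route I would pursue in parallel, which sidesteps the explicit construction entirely, is to realize $w^{g}$ (up to coboundary) as the 2-cocycle of a concretely built decomposable product system over $P$ --- for instance, an inhomogeneous Poisson process over $A$ with intensity twisted by $g$ in the spirit of \cite{Poisson} --- whence by Theorem \ref{structure over P} the associated 2-cocycle is automatically admissible, and admissibility of $w^{g}$ follows by cohomological invariance.
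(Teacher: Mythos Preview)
Your reduction to the representatives $w^{g}$ and the algebraic simplification $T(a,b,c)=Im\langle\Gamma(a+b,c)\mid\Gamma(b,c)\rangle$ are both correct and clean. The gap is exactly where you flag it: you do not actually produce a $\beta$ with $\delta^{2}\beta=T$, and neither of your two proposed routes is carried out. The direct construction faces a real obstruction that you do not identify: the natural candidate $\beta(a,b)=-Im\int_{\RO}\nu_{b}(y)\,g(y+a_{1})\overline{g(y)}\,dy$ (which is what your quadratic ansatz leads to) requires $g\in L^{2}(\RO)$ to converge, and the whole point is that $g$ need not lie in $L^{2}$. The Poisson-process route is only a heuristic; you would have to check that the construction in \cite{Poisson} applies to an arbitrary $g\in Z^{1}(\RO,L^{2}(\RO))$, and you give no argument for this.

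The paper's argument supplies the missing idea: approximate $g$ by $g_{n}\in L^{2}(\RO)$ using the vanishing of the \emph{reduced} cohomology $H^{1}_{red}(\RO,L^{2}(\RO))=0$, so that $L_{z}g_{n}-g_{n}\to L_{z}g-g$ in $L^{2}$. For each $g_{n}$ the explicit $\beta_{n}$ above is well-defined and shows $T_{n}\in B^{3}_{alg}(\mathbb{R}^{d},\mathbb{R})$; since $T_{n}\to T$ pointwise, $T$ is a $3$-cocycle with $T_{*}=0$ (Remark~\ref{anstisymmetric trick}). The Van~Est theorem then says $T$ is cohomologous to an antisymmetric $3$-linear form $W$; but $W=W_{*}=T_{*}=0$, so $T$ is a coboundary, and a Borel $\beta$ exists. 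This approximation-plus-Van-Est step is the substantive content you are missing, and without it the proof is incomplete.
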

\begin{proof}
 Fix $u\in Z^{1}(\Omega,\mathcal{L}_{A})$. By Thm. \ref{computation of cohomology group}, it follows that $u$ is cohomologous to  $w^{g}$ for some $g \in Z^1\Big(\RO,L^2\big(\RO\big)\Big)$.  Thanks to Prop. \ref{admissibility of cohomologous cocycles}, it suffices to show that $w^g$ is admissible.

 It is known that the reduced cohomology $H^{1}_{red}\Big(\RO,L^{2}\big(\RO\big)\Big)=0$ (see  Section 1.4 of \cite{Guichardet}). Therefore, there exists a sequence $(g_{n})\in L^{2}(\RO)$ such that for every $z \in \RO$, $L_zg_n-g_n \to L_zg-g$ in $L^2(\RO)$. Moreover, the convergence is uniform in $z$. 
 Let $\Gamma_n$ be the $2$-cocycle associated to $w^{g_n}$ and let $\Gamma$ be the cocycle associated to $w^{g}$. 

 Define $T:P \times P \times P \to \mathbb{C}$ by 
 \[
 T(a,b,c)=Im \langle \Gamma(a,b+c)|V_b\Gamma(b,c)\rangle,
 \]
 and define $T_n: P \times P \times P \to \mathbb{C}$ by 
 \[
 T_n(a,b,c)=Im \langle \Gamma_n(a,b+c)|V_b\Gamma_n(b,c)\rangle.
 \]
  For $y \in \RO$ and $b=(b_1,b_2) \in \bbr^d$, let 
  \[
  r(y,b):=\varphi(y-b_1)-\varphi(y)+b_2.
  \]
    By a straightforward computation, we observe that for $(a,b,c) \in P \times P \times P$,
 \begin{equation*}
  T(a,b,c):= Im \int_{\RO}\int_{r(y,b)}^{r(y,b+c)}(g(y+a_{1})-g(y))(\overline{g(y)-g(y-b_{1})}dtdy.
 \end{equation*}  
   Similarly,
 \begin{equation*}
  T_{n}(a,b,c):= Im\int_{\RO} \int_{r(y,b)}^{r(y,b+c)}(g_{n}(y+a_{1})-g_{n}(y))\overline{(g_{n}(y)-g_{n}(y-b_{1}))}dtdy
 \end{equation*} whenever $a=(a_{1},a_{2}),$ $b=(b_{1},b_{2}),$ $c=(c_{1},c_{2})\in P$.
 
 The map $\widetilde{T}:\R^{d}\times \R^{d}\times \R^{d}\to \R$ defined by
  \begin{equation*}
  \widetilde{T}(a,b,c):= Im \int_{\RO}\int_{r(y,b)}^{r(y,b+c)}(g(y+a_{1})-g(y))(\overline{g(y)-g(y-b_{1})}dtdy
 \end{equation*} is Borel, and extends $T$ to $\R^{d}\times \R^{d}\times \R^{d}$. Similarly, the maps $\widetilde{T_{n}}$ can be defined for each $n\geq 1$.
 Abusing notation, we denote $\widetilde{T}$ as $T$ and $\widetilde{T_{n}}$ as $T_{n}$  respectively. By Remark \ref{bdd}, given $b,c \in \R^d$, there exists  exists $M>0$ such that $|r(y,b)-r(y,b+c)|\leq M$.
 
Now it is clear that the sequence $T_{n}$ converges to $T$ pointwise on $\R^{d}\times\R^{d}\times\R^{d}$. 
 For $n\in\mathbb{N}$, let $\beta_{n}:\R^{d}\times\R^{d}\to\mathbb{R}$ be defined by \begin{equation*}\beta_n(a,b)=  -Im\int_{\RO}\int_{0}^{r(y,b)}g_{n}(y+a_{1})\overline{g(y)}dtdy.
 \end{equation*}
 
 Then, \begin{equation} \label{Tn1}
T_{n}(a,b,c)=\beta_{n}(a,b)+\beta_{n}(a+b,c)-\beta_{n}(a,b+c)-\beta_{n}(b,c)
 \end{equation} for every $n\geq 1$ and $a,b,c\in\mathbb{R}^d$.
 Hence, $T_{n}\in B^{3}(\R^{d},\mathbb{R})$ for each $n\geq 1$. Since $T_{n}\to T$ pointwise, $T$ is a $3$-cocycle. 
 
 By the Van Est theorem (Corollary 7.2 of \cite{Guichardet2}), $T$ is cohomologous to an anti-symmetric 3-linear form, say $W$.  Eq. \ref{Tn1}, the fact that $T_n \to T$ pointwise  and Remark \ref{anstisymmetric trick} imply that the anti-symmetric part of $T$, i.e. $T_{*}=0$. Note that $W=W_{*}$. Since the anti-symmetric part of a coboundary in $Z^{3}(\R^{d},\R)$ is zero, it follows that $W_{*}=0$, i.e $T$ is a coboundary. Hence, there exists a Borel map
 $\beta:\R^{d}\times\R^{d}\to\R$ such that \[
T(a,b,c)=\beta(a,b)+\beta(a+b,c)-\beta(a,b+c)-\beta(b,c)
 \] for $a,b,c\in P$. Define  $\alpha:P \times P \to \mathbb{T}$  by 
$
 \alpha(a,b):=e^{i\beta(a,b)}.
 $
 Then,
 \[
e^{iT(a,b,c)}=\frac{\alpha(a,b)\alpha(a+b,c)}{\alpha(a,b+c)\alpha(b,c)}
 \] for $a,b,c\in P$, i.e. $u$ is admissible. 
 The equality 
 \begin{equation*}H_{a}^{1}(\Omega,\mathcal{L}_{A})=H^{1}\Big(\RO,L^{2}\big(\RO\big)\Big)
    \end{equation*}
follows from Thm. \ref{computation of cohomology group} and from Prop. \ref{admissibility of cohomologous cocycles}. The proof is complete.
 \end{proof}

We end this paper by parametrising $\mathcal{D}_{V^A}(P)$ in the co-dimension one case. 
Let $V$ be a pure isometric representation of $P$ on a separable Hilbert space $\mathcal{H}$. Recall that $D_{V}(P)$ denotes the collection of all decomposable product systems (upto projective isomorphism) over $P$ whose associated isometric representation is unitarily equivalent to $V$. 
  Denote the commutant $\{V_{a},V_{a}^{*}\}^{'}$ by $M_{V}$. Let $\mathcal{U}(M_{V})$ be the unitary group of $M_{V}$. By Eq. \ref{succinct},  $D_{V}(P)$ is in bijective correspondence with $\frac{H^{1}_{a}(P,\mathcal{L}_V)}{\mathcal{U}(M_{V})}=\frac{H^{2}_{a}(P,\mathcal{H})}{\mathcal{U}(M_V)}$.
  
 Let $H$ be a closed subgroup of $\R^{d}$ for $d\geq 2$. Let $A\subset \RH$ be a $P$-space. Consider the shift semigroup $V:=V^{A}$ associated to $A$.
 We next obtain a neat parametrisation of $\mathcal{D}_{V^A}(P)$ when $codim(H)=1$.
  Let $L(H)$ denote the linear span of $H$. Define an equivalence relation $\sim$ on $L(H)$ by $\lambda\sim\mu$ if and only if $\lambda=\pm\mu$ for $\lambda,\mu\in L(H)$.
 
 \begin{prop}
 \label{parametrisation in codim1}
 Keep the foregoing notation. If $codim(H)=1$, then  $D_{V^{A}}(P)$ is in bijective correspondence with $\frac{L(H)}{\sim}$. 
      \end{prop}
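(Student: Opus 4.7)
The plan is to combine Eq.~\ref{succinct}, which identifies $\mathcal{D}_{V^{A}}(P)$ with $H^{1}_{a}(P,\mathcal{L}_{V^{A}})/\mathcal{U}(M_{V^{A}})$, with the explicit computation of $H^{1}_{a}$ provided by Thm.~\ref{H1 codim one}: in the codim-one case, the admissible cohomology classes are exactly the $[u^{\gamma_{1},\gamma_{2}}]$ with $(\gamma_{1},\gamma_{2})\in L(H)\oplus L(H)$ linearly dependent. Since Thm.~\ref{computation of cohomology group} tells me that the parametrisation $(\gamma_{1},\gamma_{2})\mapsto [u^{\gamma_{1},\gamma_{2}}]$ is a bijection onto its image, the task reduces to describing the quotient of the set of linearly dependent pairs in $L(H)\oplus L(H)$ by the natural action of $\mathcal{U}(M_{V^{A}})$.

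Next, I would complexify the parametrisation. Viewing $L(H)\oplus L(H)$ as the complexification of $L(H)$, encode the pair $(\gamma_{1},\gamma_{2})$ as the single symbol $w:=\gamma_{1}+i\gamma_{2}$; linear dependence of $(\gamma_{1},\gamma_{2})$ is then equivalent to $w$ being a complex scalar multiple of some real vector $\lambda\in L(H)$. The scalar subgroup $\{e^{i\theta}I : \theta\in\mathbb{R}\}\subseteq\mathcal{U}(M_{V^{A}})$ acts on the associated $2$-cocycle by an overall phase, and a short computation from the definition of $\Gamma^{\phi}$ shows that the induced action on pairs is $w\mapsto e^{i\theta}w$. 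Writing a linearly dependent $w$ as $z\lambda$ with $\lambda\in L(H)$ of unit norm and $z\in\mathbb{C}$, the $\mathbb{T}$-orbit of $w$ is a circle whose only invariant, after accounting for the simultaneous sign flip $(z,\lambda)\leftrightarrow(-z,-\lambda)$ that leaves $w$ fixed, is the real vector $|z|\lambda\in L(H)$ up to sign. This already yields a bijection between $\mathbb{T}$-orbits of linearly dependent pairs and $L(H)/\sim$.

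The final and main technical step is to verify that no non-scalar element of $\mathcal{U}(M_{V^{A}})$ creates further identifications among admissible classes. For this I plan to invoke Thm.~\ref{projectively isomorphic product systems}: if a unitary $U\in\mathcal{U}(M_{V^{A}})$ witnesses a projective isomorphism between $E^{(\alpha_{1},\Gamma^{\gamma_{1},\gamma_{2}},V^{A})}$ and $E^{(\alpha_{2},\Gamma^{\gamma_{1}',\gamma_{2}'},V^{A})}$, then $\Gamma^{\gamma_{1}',\gamma_{2}'} - U\Gamma^{\gamma_{1},\gamma_{2}}$ is forced to be a coboundary. Because every cocycle value $\Gamma^{\gamma_{1},\gamma_{2}}(a,b)$ is a scalar multiple $c(a)\cdot 1_{A\setminus A\boxplus b}$ of an indicator, and because $codim(H)=1$ makes this indicator an element of $L^{2}(A)$ of finite nonzero norm (Remark~\ref{measure of lshape}), testing the coboundary identity against $1_{A\setminus A\boxplus b}$ and invoking the coherence of the trivialising section (Remark~\ref{continuity of coherence}) should let me extract a single $b$-independent phase $e^{i\theta}\in\mathbb{T}$ with $\gamma_{1}'+i\gamma_{2}' = e^{i\theta}(\gamma_{1}+i\gamma_{2})$. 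The hardest part is organising this extraction so that all potential $a$- and $b$-dependence in the twist produced by $U$ collapses to a genuine constant; this is precisely where the codim-one hypothesis, through the finite measure of $A\setminus A\boxplus b$, is indispensable. Combining this with the scalar computation of the previous paragraph, the full $\mathcal{U}(M_{V^{A}})$-action on admissible classes coincides with the scalar $\mathbb{T}$-action, whence $\mathcal{D}_{V^{A}}(P)=L(H)/\sim$.
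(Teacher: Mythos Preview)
Your overall strategy matches the paper's: invoke Eq.~\ref{succinct} and Thm.~\ref{H1 codim one}, use the scalar circle in $\mathcal{U}(M_{V^{A}})$ to rotate any admissible class to one of the form $[u^{\lambda,0}]$ with $\lambda\in L(H)$, and then prove that two such classes $[u^{\lambda,0}]$, $[u^{\mu,0}]$ lie in the same $\mathcal{U}(M_{V^{A}})$-orbit only when $\lambda=\pm\mu$. The surjectivity half is fine.

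The gap is in the injectivity step. You propose to ``test the coboundary identity against $1_{A\setminus A\boxplus b}$'' to extract a single phase, but this does not work as stated. If $U\in\mathcal{U}(M_{V^{A}})$ witnesses $[u^{\lambda,0}]=[u^{\mu,0}]$, the coboundary equation involves the vectors $U\,1_{A\setminus A+b}$, and there is no a~priori reason these are scalar multiples of $1_{A\setminus A+b}$; taking an inner product with the indicator yields only the number $\langle U\,1_{A\setminus A+b},\,1_{A\setminus A+b}\rangle$, which is not enough to force a global phase relation. Coherence of the trivialising section (Remark~\ref{continuity of coherence}) only gives continuity in $a$; it does not collapse the $b$-dependence of $U\,1_{A\setminus A+b}$.

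The paper supplies the missing structural input. Setting $f_{a}:=U\,1_{A\setminus A+a}$, one has $f_{a}\in\mathrm{Ker}(V_{a}^{*})$ and, because $U$ commutes with each $V_{a}$, the family $\{f_{a}\}_{a\in P}$ satisfies the additivity relation $f_{a}+V_{a}f_{b}=f_{a+b}$. The paper then invokes Prop.~3.2 of \cite{piyasa}, which classifies exactly such additive sections for the shift semigroup in codimension one: they are necessarily of the form $f_{a}=c\,1_{A\setminus A+a}$ for a single constant $c\in\mathbb{T}$. Once this is known, the coboundary equation reduces to $\langle\lambda\,|\,a\rangle-c\langle\mu\,|\,a\rangle=g(x+a)-g(x)$, and Remark~\ref{sigma} forces $\langle\lambda-c\mu\,|\,a\rangle=0$ on $L(H)$; since $\lambda,\mu$ are real this gives $c=\pm 1$ and $\lambda=\pm\mu$. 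You should either cite this external result or reproduce its proof; the inner-product test you sketch is not a substitute for it.
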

 \begin{proof}We write $V^{A}=V$ for simplicity. Then, from Thm. \ref{H1 codim one}, 
\[H^{1}_{a}(P,\mathcal{L}_{A})\cong\{(\lambda,\mu)\in L(H)\oplus L(H):\textrm{ $\lambda$, $\mu$ are linearly dependent}\}.\] An element $(\lambda,\mu)\in L(H)\oplus L(H)$ such that $\lambda,\mu $ are linearly dependent corresponds to the 1-cocycle $u\in Z^{1}_{a}(P,\mathcal{L}_A)$ given by $u_{a}=\langle\lambda|a\rangle+i\langle\mu|a\rangle$ for $a\in P$. Since $\lambda,\mu$ are linearly dependent, there exist $\theta \in \bbr$ and $\lambda_0 \in L(H)$ such that  $u_a(x)=e^{i\theta}\langle \lambda_0|a \rangle$ for almost all $x$. Since the unitary $L^{2}(A)\ni f\to e^{i\theta}f\in L^{2}(A)$ belongs to $M_{V}$, $[(\lambda,\mu)]=[(\lambda_{0},0)]$ in $\displaystyle \frac{H^{1}_{a}(P,\mathcal{L}_A)}{\mathcal{U}(M_{V})}$.

 Let $\lambda,\mu\in L(H)$ be such that $[(\lambda,0)]=[(\mu,0)]$ in $\displaystyle \frac{H^{1}_{a}(P,\mathcal{L}_A)}{\mathcal{U}(M_V)}$. Then, there exists $U\in M_{V}$  and  $g\in \mathcal{L}_{A}$ such that
 \begin{equation}
 \label{lambda mu cohomologous}V_{a}(\langle\lambda|a\rangle 1_{A\setminus A+b}-\langle\mu|a\rangle U(1_{A\setminus A+b}))=g_{a+b}-g_{a}-V_{a}g_{b}
 \end{equation} for $a,b\in P$, where $g_{a}=g1_{A\setminus A+a}
 $ for $a\in P$. For $a\in P$, define $f_{a}:=U1_{A\setminus A+a}$ . Note that $f_{a}\in Ker(V_{a}^{*})$ for $a\in P$. Moreover, \[f_{a}+V_{a}f_{b}=f_{a+b}\] for $a,b\in P$. By Prop. 3.2 of \cite{piyasa},for each $a\in P$, there exists a scalar $c\in\mathbb{T}$ such that $f_{a}=c1_{A\setminus A+a}$ for $a\in P$ .
 Therefore, by Eq. \ref{lambda mu cohomologous},  for each $a\in P$,\[\langle\lambda|a\rangle+c\langle\mu|a\rangle=g(x+a)-g(x)\] for almost every $x\in A$. 
 
 Consider the 1-cocycle $w\in Z^{1}(P,\mathcal{L}_{A})$ given by \[w_{a}(x)=g(x+a)-g(x)-\langle\lambda|a\rangle+c\langle\mu|a\rangle\] for $a\in P$, $x\in A$. Then, $w=0$. By Remark \ref{sigma}, $\langle\lambda|a\rangle-c\langle\mu|a\rangle=0$ for $a\in L(H)$. Hence, we must have $c\in\R$, i.e. $c=\pm 1$ and $\lambda=c\mu$, i.e $\lambda=\pm \mu$. Therefore, $[(\lambda,0)]=[(\mu,0)]$ if and only if $\lambda=\pm\mu$.  Now, the Theorem follows from Eq. \ref{succinct}.
\end{proof}

\begin{remark}
If $codim(H) \geq 2$, it is difficult to obtain a neat `explicit' parametrisation of $\mathcal{D}_{V^A}(P)$, by a finite dimensional space, as in Prop. \ref{parametrisation in codim1}. By Eq. \ref{succinct}, \[D_{V^A}(P)=  
\displaystyle \frac{H^1(\RO,L^2(\RO))}{\mathcal{U}(M_{V^A})}.\] In this situation, there is no `good basis' or `no good coordinates' that parametrises $D_{V^A}(P)$. In fact, it is possible to find uncountably many classes $\{[u_i]: i \in I\}$  such that if $v_i$ represents $[u_i]$, then $\{v_i: i \in I\}$ is linearly independent. 

To illustrate, consider the example when $H=0$ and $A$ has no stabiliser, i.e.
\[\{z \in \mathbb{R}^{d}: A+z=A\}=\{0\}.\] By Corollary 3.4 of \cite{Anbu_Sundar}, it follows that $V:=V^{A}$ is irreducible, i.e. $\{V_x,V_{x}^{*}:x \in P\}^{'}=\mathbb{C}$. By Eq. \ref{succinct} and by Prop. \ref{H1 codim greater than 1}, it follows that 
\[
\mathcal{D}_{V^A}(P)=\frac{H^1(\mathbb{R}^{d-1},L^{2}(\mathbb{R}^{d-1}))}{\mathbb{T}}.\]
The action of $\mathbb{T}$ on $H^1(\mathbb{R}^{d-1},L^{2}(\mathbb{R}^{d-1}))$ is given by $\lambda.[u]=[\lambda u]$.

It is known that the space $H^1(\mathbb{R}^{d-1},L^{2}(\mathbb{R}^{d-1}))$ is infinite dimensional. We give an explicit example of uncountably many linearly independent elements in $H^1(\bbr^{d-1},L^{2}(\bbr^{d-1}))$ below.

Fix $d\geq 2$. Let $p>0$. Let $f^{p}:\mathbb{R}^{d-1}\to\mathbb{R}$ be the function defined by 
 \[
 f^{p}(x_{1},x_{2},\cdots, x_{d-1})=\begin{cases}\frac{1}{(\sum_{i=1}^{d-1}x_{i})^{p}}, & \textit{if $x_{i}> 1$ for each $i=1,2,\cdots d-1$},\\
 0, &\textit{otherwise.}
 \end{cases}\]
  Observe that $f^{p}\in L^{2}(\R^{d-1})$ if and only if $p>\frac{d-1}{2}$.  For $a\in \R^{d-1}$, let the function $g_{a}:\R^{d-1}\to\R^{d-1}$ be defined by $g_{a}(x)=f^{p}(x+a)-f^{p}(x)$ for $x\in\R^{d-1}$. It can be verified that  $g_{a}\in L^{2}(\R^{d-1})$ if $p>\frac{d-2}{2}$. Let $J=\{p:\frac{d-2}{2}<p<\frac{d-1}{2}\}$. Then,   the set $\{[f^{p}]:p \in J\}$ is linearly independent in $H^1(\bbr^{d-1},L^2(\bbr^{d-1}))$.
 
\end{remark}
 \bibliography{reference}
\bibliographystyle{plain}

\end{document}